\numberwithin{equation}{section}
\newcommand{\stat}{{\mathrm{st}}}
\newcommand{\flow}{\Phi}
\newcommand{\Cc}{{\mathcal C}}
\newcommand{\Cs}{{\mathcal S}}
\newcommand{\Ct}{{\mathcal T}}
\newcommand{\dataspace}{{\mathcal H}}
\newcommand{\Prob}{\mathbb P}
\newcommand{\I}{\mathbf 1}
\newcommand{\N}{\mathbb N}
\newcommand{\R}{\mathbb R}
\newcommand{\Z}{\mathbb Z}
\newcommand{\Tor}{\mathbb T}
\newcommand{\Fou}{\mathcal F}
\newcommand{\inv}{^{-1}}
\newcommand{\abs}[1]{{\left|#1\right|}}
\newcommand{\smallabs}[1]{{|#1|}}
\newcommand{\bignorm}[1]{{\left\|#1\right\|}}
\newcommand{\norm}[1]{{\|#1\|}}
\newcommand{\inorm}[1]{{\left\langle{#1}\right\rangle}}
\newcommand{\dual}[1]{{\left\langle#1\right\rangle}}
\newcommand{\diff}{\operatorname{d}\!}
\newcommand{\dx}{\diff x}
\newcommand{\dy}{\diff y}
\newcommand{\ds}{\diff s}
\newcommand{\dt}{\diff t}
\DeclareMathOperator{\E}{\mathbb E}
\DeclareMathOperator{\supp}{Supp}
\DeclareMathOperator{\Law}{Law}
\newcommand{\wick}[1]{\mathopen:#1\mathclose:}
\newenvironment{aligncases}{\left\{\begin{aligned}}{\end{aligned}\right.}
\newtheorem{theorem}{Theorem}[section]
\newtheorem{lemma}[theorem]{Lemma}
\newtheorem{corollary}[theorem]{Corollary}
\theoremstyle{definition}
\newtheorem{remark}[theorem]{Remark}
\newtheorem{example}[theorem]{Example}
\newtheorem{definition}[theorem]{Definition}
\title{Invariance of $\phi^4$ measure under nonlinear wave and Schrödinger equations on the plane}
\author{Nikolay Barashkov%
\footnote{\url{nikolay.barashkov@mis.mpg.de};
Max Planck Institute for Mathematics in the Sciences, Leipzig, Germany},
Petri Laarne%
\footnote{\url{petri.laarne@helsinki.fi};
Department of Mathematics and Statistics, University of Helsinki, Finland;
ORCiD~0000-0002-9044-8583}}
\date{}
\begin{document}

\maketitle

\begin{abstract}
We show almost sure wellposedness of mild solution to
the cubic nonlinear wave equation in a weighted Besov space over $\R^2$.
To achieve this, we show that any weak limit of $\phi^4$ measures on increasing tori
is invariant under the equation.
We review and slightly simplify the periodic theory and the construction of the weak limit measure,
and then use finite speed of propagation to reduce the infinite-volume case to the previous setup.
Our argument also gives a weaker invariance result
on the nonlinear Schrödinger equation in the same setting.\\

\noindent\textbf{MSC (2020):} 35L71, 60H30 (Primary); 35Q55, 60H15, 81T08\\
\textbf{Keywords:} nonlinear wave equation, nonlinear Schrödinger equation,
invariant measure, stochastic quantization, $\phi^4$ measure, weighted Besov space
\end{abstract}

% Suggestions for MSC classes, based on references:
%  35L71 Second-order semilinear hyperbolic equations
%  35Q55 NLS equations (nonlinear Schrödinger equations)
%  60H15 Stochastic partial differential equations (aspects of stochastic analysis) 
%  60H30 Applications of stochastic analysis (to PDEs, etc.)
%  81T08 Constructive quantum field theory

%
%
%
\section{Introduction}

Since Jean Bourgain's work in the 1990s,
invariant measures have been an important tool in probabilistic solution theory
of dispersive PDEs.
Bourgain originally studied the nonlinear Schrödinger equation
\begin{equation}
i\partial_t u(x, t) + \Delta u(x,t) = \pm\lambda\, u(x,t) \abs{u(x,t)}^p
\end{equation}
on one-dimensional torus $\Tor$ \cite{bourgain_periodic_1994}.
He proved almost sure wellposedness when the initial data
is sampled from the natural Gibbs measure.
We are interested in $p=2$,
in which case the Gibbs measure is the (complex) $\phi^4$ measure from quantum field theory.
Later on in \cite{bourgain_invariant_1996}, he extended the result to $\Tor^2$.
In two or more dimensions the $\phi^4$ measure is supported on distributions,
and it then becomes necessary to renormalize the nonlinearity
by Wick ordering:
\begin{equation}\label{eq:nls}
\tag{NLS}
i\partial_t u(x, t) + (m^2 + \Delta) u(x,t) = \lambda\, \wick{u(x,t) \abs{u(x,t)}^2}
\end{equation}

Our main subject is the defocusing massive nonlinear wave equation
\begin{equation}\label{eq:nlw}
\tag{NLW}
\partial_{tt} u(x, t) + (m^2 - \Delta) u(x,t) = -\lambda\, \wick{u(x,t)^3},
\quad m^2 > 0,
\end{equation}
on spatial domain $\R^2$.
This equation with Gibbsian initial data (and a more general nonlinearity)
was previously solved on $\Tor^2$ by Oh and Thomann \cite{oh_invariant_2020}.
The main result of this article can be stated as follows:

\begin{theorem}[Global existence and uniqueness]\label{thm:global moment bounds}
Let $\vec{\mu}$ be the product of infinite-volume $\phi^4$ and white noise measures
and fix $\varepsilon > 0$.
Let $H^{-\varepsilon}(\rho)$ be the Besov space
with a sufficiently integrable polynomial weight $\rho$.
For $\vec{\mu}$-almost all initial data,
the nonlinear wave equation~\eqref{eq:nlw}
has a unique mild solution in $C(\R_+;\, H^{-\varepsilon}(\rho))$.
\end{theorem}
The precise definition of $H^{-\varepsilon}(\rho)$ is given in Section~\ref{sec:besov},
and of mild solution in Definition~\ref{def:Solution Whole Space}.

Our approach is to construct solutions on periodic domains $\Lambda_L \coloneqq [{-L},{L}]^2$
and then approximate infinite-volume solutions with them.
The high-level proof strategy on periodic domain goes back to Bourgain:
\begin{enumerate}
\item Define a probability distribution on the initial data.
\item Prove deterministic wellposedness for time interval $[0,\tau]$
when the initial data belongs to some set $A$ of large probability.
The small time $\tau$ depends on the size of $A$.
\item Prove that the probability measure is invariant in time under the equation.
\item Intersect the sets of initial and final values, which have same probability by invariance.
By iteration, the probability of blow-up by time $T = n\tau$ is bounded by $n(1-\Prob(A))$.
\item Use stochastic estimates to show that an increase of $\Prob(A)$ cancels
the corresponding increase of iterations $n$;
thus the probability of blow-up can be made arbitrarily small.
\end{enumerate}
This argument reduces the global-in-time solution theory into understanding the
invariance and large deviations of the Gibbs measure.
To show invariance, we use finite-dimensional approximation.
\mbox{Liouville}'s theorem states that the Gibbs measure associated to the Hamiltonian
of Fourier-truncated \eqref{eq:nlw} is invariant.
These approximate measures converge in total variation to the untruncated, periodic-domain measures.

The extension to infinite volume relies on two further insights:
\begin{enumerate}
\item By \cite{mourrat_global_2017}, there are uniform bounds for the $2L$-periodic $\phi^4$ measures
in the polynomially weighted space $H^{-\varepsilon}(\rho)$.
This yields a convergent subsequence of measures as $L \to \infty$.
\item Thanks to the finite speed of propagation of \eqref{eq:nlw},
all statements about measurable events on $H^{-\varepsilon}(\rho)$ can be reduced to
bounded regions of $\R^2$.
This lets us go back to the periodic solution theory.
\end{enumerate}

For the nonlinear Schrödinger equation the situation is more complicated,
as there is no finite speed of propagation.
This means that we cannot reduce the problem to the periodic setup.
We can still prove a weaker form of invariance
in a larger Besov space by giving up some spatial differentiability.
This sense of invariance was initially developed
for Euler and Navier--Stokes equations by Albeverio and Cruzeiro \cite{albeverio_global_1990},
and was explored in the case of periodic 2D NLS in \cite{oh_pedestrian_2018}.
However, we are not able to comment on the uniqueness of solutions,
as can be done in one dimension \cite{bourgain_invariant_2000}.

\begin{theorem}[Weak invariance of NLS]\label{thm:nls main result}
Let $\mu$ be the complex $\phi^4$ measure on $\R^2$
and $\rho$ as above.
There exists $s > 4$ such that for $\mu$-almost all initial data,
the nonlinear Schrödinger equation~\eqref{eq:nls} with~$p=2$
has a mild solution $u \in C(\R_+;\, H^{-s}(\rho))$ in the sense of equation~\eqref{eq:nls mild}.
Moreover, for any $t \in \R_+$ we have $\Law(u(t)) = \mu$.
\end{theorem}

\subsection{The \texorpdfstring{$\phi^4$}{phi\textasciicircum4} measure}\label{sec:intro phi4}
As mentioned above, Fourier-truncated versions of these equations conserve the Hamiltonian $H$,
with which we can define the Gibbs measure proportional to $\exp(-\beta H)$.
The parameter $\beta > 0$ is called the inverse temperature.
For $N$-truncated and $2L$-periodic \eqref{eq:nlw}, the Gibbs measure is proportional to%
\footnote{In the following, we set $\beta = \lambda = 1$
as they are not too relevant for our present topic.}
\begin{equation}\label{eq:phi4 measure}
\exp\left(-\beta \int_{[{-L},{L}]^2}
    \frac{\lambda\, \wick{u^4}}{4} + \frac{m^2 \abs{u}^2 + \abs{\nabla u}^2 + \abs{\partial_t u}^2}{2}
\dx \right)
\prod_{\abs k \leq N} \mathrm d \hat u(k).
\end{equation}
The expression without restriction to $\abs k \leq N$ is only formal
since an infinite Lebesgue product measure does not exist.
However, the second exponential term yields a Gaussian factor that makes the $N \to \infty$ limit still well-defined.

The continuum versions of these Gibbs measures are
studied in constructive quantum field theory \cite{glimm_quantum_1987}.
Stochastic quantization (see e.g.\ \cite{parisi_perturbation_1981})
is a rigorous PDE approach for their study.
In this approach the $\phi^4_d$ measure is regarded as
an invariant measure for a nonlinear heat equation with white noise forcing
(see Theorem~\ref{thm:SQ} below).
These equations are singular and cannot be solved classically.  

The periodic $\phi^4_2$ equation was solved by Da~Prato and \mbox{Debussche} \cite{da_prato_strong_2003}.
The limit measure is absolutely continuous with respect to a Gaussian measure.
Existence of infinite-volume solutions for the 2D equation was later shown by Mourrat and Weber
in a polynomially weighted space \cite{mourrat_global_2017};
see also \cite{mourrat_dynamic_2017}.
We will rely heavily on these ideas in Section~\ref{sec:stochastic}.

The local wellposedness theory for the more singular $\Tor^3$ case
came in three approaches in mid-2010s:
Hairer's regularity structures \cite{hairer_theory_2014};
\mbox{Gubinelli}, \mbox{Imkeller} and \mbox{Perkowski's} \cite{gubinelli_paracontrolled_2015} paracontrolled distributions;
and \mbox{Kupiainen's} renormalization group approach \cite{kupiainen_renormalization_2016}.
The bounds of Mourrat and Weber were then exploited by Albeverio and Kusuoka \cite{albeverio_invariant_2020}
and Gubinelli and Hofmanová \cite{gubinelli_pde_2021} to give a self-contained construction of the $\phi^4_3$ measure.

In dimensions $d \geq 4$, the $\phi^4_d$ measures collapse to trivial Gaussian measures.
The last outstanding case $d=4$ was proved recently by Aizenman and Duminil-Copin;
see their article \cite{aizenman_marginal_2021} for discussion.

\bigskip
The $\phi^4$ measure is expected to be invariant under three PDEs
that share essentially the same Hamiltonian:
\eqref{eq:nls}, \eqref{eq:nlw}, and the cubic stochastic nonlinear heat equation.
As shown in \cite[Figure~1]{bringmann_invariant_2022},
the periodic-domain invariance theory is almost done,
with only the three-dimensional \eqref{eq:nls} missing.

This theory, and hence the global wellposedness of the equations,
is much less developed in the infinite volume.
For wave and Schrödinger equations the previous results are limited to
one dimension \cite{bourgain_invariant_2000} or radial setting \cite{xu_invariant_2014}.

The largest complication is that the infinite-volume $\phi^4$ measures are only defined as weak limits
of approximating sequences,
and in particular they are no longer absolutely continuous with respect to a Gaussian measure.
This means that total variation convergence is no longer available and we have to prove
local wellposedness for non-Gaussian initial data.
Depending on the coupling constant $\lambda$,
the sequence might have more than one accumulation point.

However, the invariant distribution 
can still be coupled to a Gaussian, and the perturbation term is of better Besov regularity.
This idea underlies the variational approach in \cite{barashkov_variational_2020}.
A similar fact was exploited by Bringmann and collaborators in
\cite{Bringmann_invariant_2020, Bringmann_invariant_2022-1, bringmann_invariant_2022}
in situations where the singularity of the measure arises in finite volume due to short scale divergences.

\begin{remark}
As this manuscript was being prepared, Oh, Tolomeo, Wang, and Zheng published their work
\cite{oh_hyperbolic_2022} where similar ideas appear.
They prove Theorem~\ref{thm:global moment bounds} for
a more challenging equation, \eqref{eq:nlw} with additive stochastic forcing.
This equation is also known as the \emph{canonical stochastic quantization equation};
we further discuss this hyperbolic approach to SQ in Remark~\ref{rem:sq hyperbolic}.

The approach in \cite{oh_hyperbolic_2022} is based on an optimal transport argument developed in \cite{oh_stochastic_2021},
and involves convergence of measures in a Wasserstein metric.
Our globalization argument depends more heavily on finite speed of propagation
and only uses weak convergence.
Although weaker, some of our arguments are simpler due to the use of parabolic stochastic quantization.
Moreover, our approach easily yields the weak invariance result for \eqref{eq:nls}.
\end{remark}

\subsection{Previous literature and extensions}
Let us take a moment here to review some of the history of this question.
As mentioned above, the general globalization-in-time argument
was developed by Bourgain \cite{bourgain_periodic_1994}
in context of the one-dimensional periodic \eqref{eq:nls}.
This was in response to earlier work of Lebowitz, Rose, and Speer \cite{lebowitz_statistical_1988}
in late 1980s.

Invariant measures for the one-dimensional wave equation were considered
by Zhidkov \cite{zhidkov_invariant_1994} and McKean and Vaninsky \cite{mckean_statistical_1994}.
Radially symmetric \eqref{eq:nlw} on a three-dimensional ball was considered
by Burq and Tzvetkov \cite{burq_invariant_2007} and Bourgain and Bulut \cite{bourgain_invariant_2014},
and extended by Xu to infinite volume \cite{xu_invariant_2014}.
Recently progress has been made in three dimensions,
culminating in the proof of invariance of periodic $\phi^4_3$ under the wave equation
\cite{Bringmann_invariant_2020, Bringmann_invariant_2022-1, bringmann_invariant_2022}.

NLW has also been considered with random data not sampled from the invariant measure
\cite{kenig_focusing_2021}.
Related to the invariance of Gibbs measures is the program for showing
quasi-invariance of Gaussian measures under Hamiltonian PDEs \cite{tzvetkov_quasiinvariant_2015};
in this notion the law of solutions at any given time
remains absolutely continuous with respect to the initial measure.
For the wave equation this was carried out in \cite{gunaratnam_quasi-invariant_2022, oh_quasi-invariant_2020}.

Another related development is the solution theory for \eqref{eq:nlw} with additive white noise forcing,
either with or without an additional damping term $\partial_t u$.
Local wellposedness on $\Tor^2$ was achieved in \cite{gubinelli_renormalization_2018}
and extended to global wellposedness in \cite{gubinelli_global_2022, tolomeo_global_2021}.
If the damped equation also includes dispersion,
the invariant measure is moreover ergodic \cite{tolomeo_unique_2020}.
Oh, Tolomeo, Wang, and Zheng \cite{oh_hyperbolic_2022}
consider the damped case on $\R^2$.

The nonlinearity can be replaced by a general polynomial, exponential or trigonometric term;
see \cite{oh_invariant_2021, oh_parabolic_2021, oh_hyperbolic_2022} and references therein.
These correspond to very different physical models
and feature interesting renormalization behaviour.
It is also possible to let the solution take values in a manifold instead of $\R$;
there is recent progress on invariant measures of these wave maps equations
\cite{bringmann_wave_2021,brzezniak_statistical_2022}.

For \eqref{eq:nls} in one dimension it is possible to consider both focusing and defocusing nonlinearities,
due to the presence of an $L^2$ conservation law.
Restricting to a ball in $L^2$ leads to a normalizable measure if the nonlinearity is subquintic.
In the quintic case the measure is normalizable if and only if the coupling is suffiently weak;
remarkably, this threshold is known exactly \cite{oh_optimal_2022}.

In two dimensions the defocusing case can still be investigated,
as was done by Bourgain \cite{bourgain_invariant_1996} for the cubic case and
later for general polynomial nonlinearities by Deng, Nahmod, and Yue \cite{deng_invariant_2019}.
For the focusing NLS the $L^2$ cutoff does not lead to
a normalizable measure anymore \cite{brydges_statistical_1996}.
Quasi-invariance has also been investigated for the NLS
\cite{oh_quasi-invariant_2021, oh_optimal_2018, oh_quasi-invariant_2017}.

In \cite{oh_remark_2020, seong_invariant_2022} invariant measures of the Zakharov--Yukawa system were studied.
This is a system of coupled wave and Schrödinger equations with nonpositive Hamiltonian
and an $L^2$ conservation law.
Due to these properties it behaves similarly to the defocusing NLS.

The activity described above has mostly taken place on the torus.
In infinite volume we mention the early result of Bourgain
on one-dimensional NLS \cite{bourgain_invariant_2000},
as well as the work of Cacciafesta and Suzzoni on the NLS
and other Hamiltonian equations \cite{cacciafesta_invariance_2019}.
These are in addition to the aforementioned papers \cite{oh_hyperbolic_2022, xu_invariant_2014}
on two- and three-dimensional NLW.

\bigskip
Let us conclude this review with a comment on possible extensions of our work and open problems.
Our method extends in a straightforward way to more general polynomial nonlinearities and to vector-valued models.

\begin{example}
The mass term $m^2 > 0$ in~\eqref{eq:nlw}
is used to avoid problems with the zero Fourier mode.
There are however setups (e.g.\ \cite{barashkov_eyringkramers_2024})
where the equation is formulated with a negative mass term:
\[
\partial_{tt} u(x, t) - (m^2 + \Delta) u(x,t) = -\wick{u(x,t)^3}.
\]
Mourrat and Weber~\cite{mourrat_global_2017} consider also this case.
If we add $2m^2\, u(x,t)$ to both sides of the equation,
the modified nonlinearity $-\wick{u^3} + 2m^2 u$ will still be dominated by the cubic term.
In the present work we assume a positive mass to simplify the exposition.
\end{example}

For the weak invariance we also expect the extension to long-range models (with fractional Laplacian) to be straightforward,
provided the resulting measures are not too singular.
The strong invariance of~\eqref{eq:nls} on~$\R$
under general polynomial nonlinearities (so-called $P(\phi)_1$ theories) is interesting.
The $\phi^4$ case was solved by Bourgain \cite{bourgain_invariant_2000},
and Bringmann and Staffilani \cite{bringmann_invariant_2025}
recently extended the proof to $u \abs{u}^p$ up to $p \leq 4$.
The corresponding 2D problem in the full space is very interesting,
as well as the case of non-polynomial nonlinearities.

Given the recent work \cite{bringmann_invariant_2022}
on invariance of three-dimensional periodic \eqref{eq:nlw},
it is intriguing to ask about the extension to $\R^3$.
While the measure-theoretic part of our argument is dimension-independent,
the analytic estimates would require significant changes to account for the more singular behaviour.

\subsection{Outline and notation}

Sections~\ref{sec:besov} and~\ref{sec:stochastic} are mostly toolbox sections.
In the former we define Besov spaces and their basic properties,
and in the latter we outline the construction of the $\phi^4$ measure over
polynomially weighted $\R^2$.

We review the solution of~\eqref{eq:nlw} on a periodic domain in Section~\ref{sec:periodic}.
We present a simplified version of the argument of Oh and Thomann~\cite{oh_invariant_2020},
and also provide full details on the Bourgain globalization argument.

The main result in this article is presented in Section~\ref{sec:globalization}.
We use a measure-theoretic argument to reduce the full flow to the periodic case,
and thus prove invariance of the infinite-volume $\phi^4$ measure.

In Section~\ref{sec:schrodinger}, we finally consider \eqref{eq:nls} on $\R^2$.
We prove invariance in Albeverio--Cruzeiro sense
with some weaker estimates on the solutions.

We use the following notation throughout the article:

\begin{itemize}
\item $A \lesssim B$ if $A \leq cB$ for some independent $c > 0$,
    and $A \simeq B$ if $A \lesssim B \lesssim A$.
    Positive constants $c, C$ may vary from line to line.
\item $\inorm x \coloneqq (1 + \abs x^2)^{1/2}$.
\item $P_N$ is a sharp Fourier cutoff to $B(0, 2^N)$.
\item $B^s_{p,r}(\rho)$ are weighted Besov spaces defined in Section~\ref{sec:besov}.
    We abbreviate $H^s(\rho) \coloneqq B^s_{2,2}(\rho)$
    and $\mathcal C^s(\rho) \coloneqq B^s_{\infty, \infty}(\rho)$.
\item $\rho(x) = \inorm{x}^{-\alpha}$ is a polynomial weight;
    $\alpha > 0$ may change between sections.
\item $\Lambda_L \coloneqq {[{-L},{L}]}^2$ is the periodic domain
    and $B^s_{p,r}(\Lambda_L)$ Besov space over it.
\item $\mu$ is the $\phi^4_2$ measure, and
    $\vec{\mu}$ the product of $\phi^4_2$ and white noise measures.
\item $\dataspace^{-\varepsilon}(\rho) \coloneqq H^{-\varepsilon}(\rho) \times H^{-1-\varepsilon}(\rho)$,
    where $\varepsilon > 0$ may change between sections.
\item $\mu_L$ and $\mu_{L,N}$ are bounded-domain and bounded-domain Fourier-truncated
    versions of $\mu$.
\item $\flow_t$ is the flow of \eqref{eq:nlw},
    and $\flow_{L,t}$ and $\flow_{L,N,t}$ are the flows of the periodic
    and the periodic truncated equations.
\item $\Cc_t$ and $\Cs_t$ are the linear propagators of \eqref{eq:nlw},
    defined in Section~\ref{sec:periodic}.
\item $\Ct_t$ is the linear propagator of \eqref{eq:nls},
    defined in Section~\ref{sec:schrodinger}.
\end{itemize}

\subsection{Acknowledgements}

NB was supported by the ERC Advanced Grant 741487 ``Quantum Fields and Probability''.
PL was supported by the Academy of Finland project 339982
``Quantum Fields and Probability''.
PL would like to thank Kalle Koskinen, Jaakko Sinko, and Aleksis Vuoksenmaa for useful conversations.
Both authors would like to thank Leonardo Tolomeo for helpful comments on the preprint,
and the anonymous referees for their thorough reading of the manuscript.

\section{Besov spaces}\label{sec:besov}

Besov spaces are a generalization of Sobolev spaces
that support some useful multiplication estimates and embeddings.
An excellent introduction to the topic is in the
article of Mourrat and Weber \cite{mourrat_global_2017}.
Some results are also collected in the appendix of \cite{gubinelli_pde_2021}.
The textbook of Bahouri, Chemin, and Danchin \cite{bahouri_fourier_2011}
treats the unweighted case.
Due to differences in setup and conventions,
the proofs of the following results are straightforward modifications of those in the listed references.

We will use throughout the article a nonhomogeneous polynomial weight
\begin{equation}\label{eq:besov poly weight}
\rho(x) \coloneqq \inorm x^{-\alpha} \coloneqq (1 + \abs{x}^2)^{-\alpha/2}
\end{equation}
for $\alpha \geq 0$ sufficiently large.
What ``sufficiently large'' means may vary from section to section,
but the final choice is finite.
In some sections we also use the unweighted space ($\alpha=0$);
this is indicated by omitting $\rho$.

\begin{remark}
There are two conventions of weighted $L^p$ spaces in common use.
\cite{mourrat_global_2017} and \cite{gubinelli_pde_2021} respectively define
\[
\norm{f}_{L^p_\rho}^p \coloneqq \int_{\R^d} f(x)^p \rho(x) \dx
\quad\text{and}\quad
\norm{f}_{L^p(\rho)}^p \coloneqq \int_{\R^d} f(x)^p \rho(x)^p \dx.
\]
We use the latter convention since it lets us apply a weight also when $p=\infty$.
For $p < \infty$ the conventions are interchangeable,
and the statements and their proofs require only minor changes.
\end{remark}

\begin{definition}[Littlewood--Paley blocks]\label{def:littlewood-paley}
We fix $\Delta_k$ to be Fourier multipliers
whose symbols form a partition of unity.
More precisely, for $k \geq 0$ they are smoothed indicators of the annuli
$B(0, 2^k\, 8/3) \setminus B(0, 2^k\, 3/4)$,
and for $k=-1$ of the ball $B(0, 3/4)$.
The precise choice of radii is irrelevant.
\end{definition}

\begin{definition}[Weighted Besov space]\label{def:besov}
We define the space $B^s_{p,r}(\rho)$ as the completion of $C_c^\infty(\R^d)$
with respect to the norm
\[
\norm{f}_{B^s_{p,r}(\rho)}
\coloneqq \bignorm{ 2^{ks}\, \norm{\rho(x) [\Delta_k f](x)}_{L^p} }_{\ell^r}
\]
where the $L^p$ norm is taken over $x \in \R^d$
and the $\ell^r$ norm over $k \geq -1$.
We abbreviate
$H^s(\rho) \coloneqq B^{s}_{2,2}(\rho)$
and
$\mathcal C^s(\rho) \coloneqq B^{s}_{\infty,\infty}(\rho)$.
\end{definition}

The following product inequality shows that
products of distributions and smooth enough functions are well-defined distributions.
A recurring `trick' in the following sections is to decompose stochastic objects
into distributional and more regular parts.
There are also analogues of the usual $L^p$ duality and interpolation.

\begin{theorem}[Product inequality]\label{thm:besov multiplication}
Let $s_1 \leq s_2$ be non-zero such that $s_1 + s_2 > 0$,
and let $1 \leq p, p_1, p_2, r \leq \infty$ satisfy $1/p = 1/p_1 + 1/p_2$.
Then
\[
\norm{fg}_{B^{s_1}_{p,r}(\rho_1 \rho_2)}
\lesssim \norm{f}_{B^{s_1}_{p_1,r}(\rho_1)} \norm{g}_{B^{s_2}_{p_2,r}(\rho_2)}.
\]
\end{theorem}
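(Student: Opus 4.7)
The plan is to apply Bony's paraproduct decomposition, writing
\[
fg = \pi_<(f,g) + \pi_=(f,g) + \pi_>(f,g),
\]
where $\pi_<(f,g) = \sum_j S_{j-1}f\,\Delta_j g$, $\pi_>(f,g) = \pi_<(g,f)$, and $\pi_=(f,g) = \sum_{\abs{i-j}\le 1}\Delta_i f\,\Delta_j g$, with $S_j \coloneqq \sum_{k\le j}\Delta_k$ the low-frequency projection. Each piece will be estimated in $B^{s_1}_{p,r}(\rho_1\rho_2)$ separately. Note that $s_1<s_2$ together with $s_1+s_2>0$ forces $s_2>0$, so every block $\Delta_j g$ contributes a decay factor of order $2^{-js_2}\norm{g}_{B^{s_2}_{p_2,r}(\rho_2)}$.

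For $\pi_<$, each summand $S_{j-1}f\,\Delta_j g$ has Fourier support in an annulus of radius $\sim 2^j$, so only $\Delta_k$ with $k\sim j$ detects it. I would bound $S_{j-1}f$ either by $\norm{f}_{L^{p_1}(\rho_1)}$ (when $s_1>0$, via Besov embedding) or by $2^{-js_1}\norm{f}_{B^{s_1}_{p_1,r}(\rho_1)}$ (when $s_1<0$), landing respectively in $B^{s_2}_{p,r}$ or $B^{s_1+s_2}_{p,r}$; both embed into $B^{s_1}_{p,r}$ since $s_1<s_2$ and $s_1+s_2>s_1$. The term $\pi_>$ is symmetric, with the roles of $f$ and $g$ exchanged. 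For the resonant part $\pi_=$, each summand has Fourier support in a ball of radius $\sim 2^j$, a H\"older estimate yields the factor $2^{-j(s_1+s_2)}$, and summing over $j\ge k$ is a geometric series converging exactly because $s_1+s_2>0$; this places $\pi_=(f,g)$ in $B^{s_1+s_2}_{p,r}\hookrightarrow B^{s_1}_{p,r}$. The hypothesis $s_1,s_2\ne 0$ is what lets the three geometric sums converge without logarithmic loss.

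The main obstacle is the polynomial weight, since Littlewood--Paley blocks do not commute with multiplication by $\rho$, so one cannot just apply an unweighted multiplicative estimate to $\rho_1 f$ and $\rho_2 g$. The standard remedy, as in \cite{mourrat_global_2017}, is an almost-commutation lemma: because $\Delta_k$ is convolution against a kernel of unit $L^1$ mass localised at scale $2^{-k}$, and the polynomial weight $\rho(x)=\inorm{x}^{-\alpha}$ satisfies $\rho(x)/\rho(y)\lesssim\inorm{x-y}^{\alpha}$, one can show $\norm{\rho\,\Delta_k f}_{L^p}\simeq\norm{\Delta_k(\rho f)}_{L^p}$ up to errors summable in $k$ after absorbing a small power into a slightly enlarged weight. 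Applying this to both factors reduces each of the three paraproduct estimates above to its unweighted counterpart on $\rho_1 f$ and $\rho_2 g$, and the proof then concludes in the spirit of \cite{bahouri_fourier_2011}.
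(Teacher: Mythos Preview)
The paper does not actually prove this theorem: Section~\ref{sec:besov} explicitly says the results are collected ``but largely omit[ting] the proofs,'' and simply points to \cite{mourrat_global_2017}, \cite{gubinelli_pde_2021}, and \cite{bahouri_fourier_2011}. So there is no in-paper argument to compare against; your outline is in fact \emph{more} than what the paper provides.

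Your sketch is the correct and standard one. The Bony decomposition and the case split on the sign of $s_1$ for $\pi_<$ are right, and the r\^ole of $s_1+s_2>0$ in summing the resonant piece is identified correctly. This is exactly how \cite{bahouri_fourier_2011} handles the unweighted case.

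One small point on the weighted step: the phrase ``reduces each of the three paraproduct estimates above to its unweighted counterpart on $\rho_1 f$ and $\rho_2 g$'' is a slight oversimplification. The commutator $[\rho,\Delta_k]$ being small does not literally give $\Delta_k(\rho_1\rho_2\,\pi_\bullet(f,g))\approx\Delta_k\,\pi_\bullet(\rho_1 f,\rho_2 g)$; rather, the Mourrat--Weber argument works block by block, pulling the weight through each $L^p$ norm via the pointwise bound $\rho(x)\lesssim\rho(y)\inorm{x-y}^{\alpha}$ against the localised convolution kernels. The effect is the same, and your description ``errors summable in $k$'' captures the mechanism, but if you were to write this out in full you would want to run the weight through each H\"older step inside the paraproduct estimate rather than first passing to $\rho_1 f$, $\rho_2 g$ as objects.
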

\begin{proof}
\cite[Corollaries~1 and~2]{mourrat_global_2017} and the following remarks therein,
adapted to our convention of polynomial weights.
\end{proof}

\begin{theorem}[Duality]\label{thm:besov duality}
Let $1 \leq p, p' \leq \infty$ and $1 \leq r, r' \leq \infty$ be Hölder conjugate pairs,
$0 < s < 1$, and $\rho_1$ and $\rho_2$ polynomial weights.
Then
\[
\norm{fg}_{L^1(\rho_1 \rho_2)}
\lesssim \norm{f}_{B^s_{p,r}(\rho_1)} \norm{g}_{B^{-s}_{p',r'}(\rho_2)}
\]
\end{theorem}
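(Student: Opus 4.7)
The plan is to adapt the classical unweighted Besov duality argument (as in Bahouri--Chemin--Danchin) by distributing the two polynomial weights $\rho_1, \rho_2$ between the Littlewood--Paley blocks of $f$ and $g$. I first expand
\[
\int fg\, \rho_1 \rho_2 \dx = \sum_{j,k \geq -1} \int \Delta_j f \cdot \Delta_k g \cdot \rho_1 \rho_2 \dx,
\]
and apply Hölder in $L^p, L^{p'}$ to each summand, attaching one weight to each factor:
\[
\left| \int \Delta_j f \cdot \Delta_k g \cdot \rho_1 \rho_2 \dx \right| \leq \norm{\rho_1 \Delta_j f}_{L^p} \norm{\rho_2 \Delta_k g}_{L^{p'}}.
\]

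I then split the resulting double sum into the near-diagonal region $|j-k| \leq 1$ and its complement. For the near-diagonal part, I set $a_k := 2^{ks}\norm{\rho_1 \Delta_k f}_{L^p}$ and $b_k := 2^{-ks} \norm{\rho_2 \Delta_k g}_{L^{p'}}$, so that $\norm{a}_{\ell^r} = \norm{f}_{B^s_{p,r}(\rho_1)}$ and $\norm{b}_{\ell^{r'}} = \norm{g}_{B^{-s}_{p',r'}(\rho_2)}$. A direct Hölder in $\ell^r, \ell^{r'}$ then yields
\[
\sum_{|j-k|\leq 1} \norm{\rho_1 \Delta_j f}_{L^p} \norm{\rho_2 \Delta_k g}_{L^{p'}} \lesssim \norm{f}_{B^s_{p,r}(\rho_1)} \norm{g}_{B^{-s}_{p',r'}(\rho_2)}.
\]

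For the off-diagonal part the naive Hölder fails, since the factors $2^{(k-j)s}$ are unbounded. Here I use almost-orthogonality: in the unweighted case ($\rho_1 = \rho_2 = 1$) these pairings vanish exactly by Plancherel and the support properties of Definition~\ref{def:littlewood-paley}. With weights, the weight $\rho_1 \rho_2$ is smooth and decaying, so its Fourier transform decays rapidly, and writing the pairing on the Fourier side gives an arbitrary polynomial factor $2^{-N|j-k|}$ whenever $|j-k| \geq 2$. Combined with Schur's test applied to the matrix $\bigl(2^{-N|j-k|}\bigr)_{j,k}$, this absorbs the loss and controls the off-diagonal contribution by the same product of Besov norms.

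The main obstacle is precisely this off-diagonal step, where the Plancherel orthogonality of the unweighted theory is replaced by an almost-orthogonality that must be quantified through the smoothness of the polynomial weight $\rho(x) = \inorm{x}^{-\alpha}$. The argument is parallel in spirit to the Littlewood--Paley bookkeeping already underlying Theorem~\ref{thm:besov multiplication}, so the proof should fit well with the surrounding multiplicative estimates.
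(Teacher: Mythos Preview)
The paper does not actually prove Theorem~\ref{thm:besov duality}; Section~\ref{sec:besov} explicitly ``largely omit[s] the proofs'' and refers to \cite{mourrat_global_2017}, \cite{gubinelli_pde_2021}, and \cite{bahouri_fourier_2011}. So there is no in-paper proof to compare against, and your proposal must stand on its own.

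Your near-diagonal step is correct. The off-diagonal step, however, has a real gap. Passing to the Fourier side and using that $\widehat{\rho_1\rho_2}$ decays rapidly does give smallness, but the bound you obtain this way is
\[
\left|\int \Delta_j f\,\Delta_k g\,\rho_1\rho_2\,\dx\right|
\;\lesssim_N\; 2^{-N\max(j,k)}\,\norm{\Delta_j f}_{L^p}\,\norm{\Delta_k g}_{L^{p'}},
\]
with \emph{unweighted} norms on the right. To reach $\norm{\rho_1\Delta_j f}_{L^p}\norm{\rho_2\Delta_k g}_{L^{p'}}$ you would need to insert $\rho_1^{-1},\rho_2^{-1}$, which grow polynomially in $x$; that growth is in physical space and is completely decoupled from the frequency parameter, so the factor $2^{-N|j-k|}$ cannot absorb it. A concrete obstruction: take $\Delta_j f$ and $\Delta_k g$ both spatially localised near $|x|=R$ with $R$ large; the Fourier-side estimate sees no $R$-decay, while the weighted norms on the right pick up $R^{-\alpha}$ factors.

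What actually makes the weighted theory work is the pointwise property $|\nabla^\ell\rho(x)|\lesssim_\ell \rho(x)$ of polynomial weights, exploited on the physical side through commutator bounds of the type $\norm{[\Delta_j,\rho]\,h}_{L^p}\lesssim 2^{-j}\norm{\rho h}_{L^p}$. These give the norm equivalence $\norm{f}_{B^s_{p,r}(\rho)}\simeq\norm{\rho f}_{B^s_{p,r}}$ (this is the route taken in \cite{mourrat_global_2017} and \cite{gubinelli_pde_2021}), after which the duality reduces immediately to the unweighted statement via
\[
\norm{fg}_{L^1(\rho_1\rho_2)}=\norm{(\rho_1 f)(\rho_2 g)}_{L^1}
\lesssim \norm{\rho_1 f}_{B^s_{p,r}}\norm{\rho_2 g}_{B^{-s}_{p',r'}}
\simeq \norm{f}_{B^s_{p,r}(\rho_1)}\norm{g}_{B^{-s}_{p',r'}(\rho_2)}.
\]
If you want to keep your off-diagonal structure, you can, but the almost-orthogonality must be run through these commutators rather than through $\widehat{\rho_1\rho_2}$ alone.
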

\begin{proof}
Adaptation of \cite[Proposition~7]{mourrat_global_2017}.
\end{proof}

\begin{theorem}[Interpolation]\label{thm:besov interpolation}
Fix $\theta \in (0, 1)$, $s = \theta s_1 + (1-\theta) s_2$, and
\[
\frac 1 p = \frac{\theta}{p_1} + \frac{1 - \theta}{p_2}, \quad
\frac 1 r = \frac{\theta}{r_1} + \frac{1 - \theta}{r_2}, \quad
\alpha = \theta \beta + (1-\theta) \gamma
\]
for some $1 \leq p, p_1, p_2, r, r_1, r_2 \leq \infty$ and $s_1, s_2, \beta, \gamma \in \R$.
Then
\[
\norm{f}_{B^s_{p,r}(\rho^\alpha)}
\leq \norm{f}_{B^{s_1}_{p_1,r_1}(\rho^\beta)}^\theta
    \norm{f}_{B^{s_2}_{p_2,r_2}(\rho^\gamma)}^{1-\theta}.
\]
\end{theorem}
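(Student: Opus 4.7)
The plan is to prove the interpolation inequality by applying Hölder's inequality twice: first inside each Littlewood--Paley block at the $L^p$ level, then at the $\ell^r$ level summing over blocks. The key structural fact that makes this work cleanly is that the weight convention chosen here satisfies $\rho^\alpha = \rho^{\theta\beta}\rho^{(1-\theta)\gamma}$ pointwise, so the weight factorizes in a way compatible with the decomposition of the integrand.

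First I would fix a block index $k \geq -1$ and split the integrand as
\[
\abs{\rho^\alpha \Delta_k f}^p
= \abs{\rho^\beta \Delta_k f}^{\theta p} \cdot \abs{\rho^\gamma \Delta_k f}^{(1-\theta)p}.
\]
Since $\frac{\theta p}{p_1} + \frac{(1-\theta) p}{p_2} = 1$ by definition of $p$, the exponents $p_1/(\theta p)$ and $p_2/((1-\theta)p)$ are Hölder conjugate, and Hölder's inequality yields
\[
\norm{\rho^\alpha \Delta_k f}_{L^p}
\leq \norm{\rho^\beta \Delta_k f}_{L^{p_1}}^{\theta}
     \norm{\rho^\gamma \Delta_k f}_{L^{p_2}}^{1-\theta}.
\]
Multiplying both sides by $2^{ks} = 2^{k\theta s_1} \cdot 2^{k(1-\theta)s_2}$ distributes the dyadic weight compatibly with the two factors, so that
\[
2^{ks}\norm{\rho^\alpha \Delta_k f}_{L^p}
\leq \bigl(2^{ks_1}\norm{\rho^\beta \Delta_k f}_{L^{p_1}}\bigr)^{\theta}
     \bigl(2^{ks_2}\norm{\rho^\gamma \Delta_k f}_{L^{p_2}}\bigr)^{1-\theta}.
\]

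The second step is to take the $\ell^r$ norm of the resulting sequence in $k$. Writing $a_k$ and $b_k$ for the two parenthesized quantities, I want to bound $\smallnorm{a_k^\theta b_k^{1-\theta}}_{\ell^r}$. Using the exponents $r_1/(\theta r)$ and $r_2/((1-\theta)r)$, which are Hölder conjugate by the definition of $r$, a second application of Hölder gives
\[
\norm{a_k^\theta b_k^{1-\theta}}_{\ell^r}
\leq \norm{a_k}_{\ell^{r_1}}^{\theta}\, \norm{b_k}_{\ell^{r_2}}^{1-\theta},
\]
which is precisely the claimed inequality in view of Definition~\ref{def:besov}. The cases where one of $p_i$ or $r_i$ equals $\infty$ are handled by the usual convention, taking essential suprema in place of the relevant integrals or sums.

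I do not expect any genuine obstacle here: the argument is the textbook interpolation proof for Besov spaces, and the weighted version requires no new idea once the factorization $\rho^\alpha = \rho^{\theta\beta}\rho^{(1-\theta)\gamma}$ is observed. The only point deserving mild care is bookkeeping of the Hölder exponents and a verification that the multiplication $\rho^\beta \cdot \rho^\gamma = \rho^{\beta+\gamma}$ used implicitly in the pointwise splitting is valid for the polynomial weights employed throughout (which it is, since $\rho(x) = \langle x \rangle^{-1}$ up to the fixed base exponent).
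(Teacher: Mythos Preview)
Your proof is correct and is the standard two-step H\"older argument for Besov interpolation; the weight factorization $\rho^\alpha = \rho^{\theta\beta}\rho^{(1-\theta)\gamma}$ is exactly the observation that makes the weighted case no harder than the unweighted one. The paper itself states this theorem without proof (Section~\ref{sec:besov} collects standard results and ``largely omit[s] the proofs''), so there is no alternative approach to compare against; your argument is precisely what one would expect to find in the references cited there.
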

\begin{proof}
\cite[Lemma~A.3]{gubinelli_pde_2021}.
\end{proof}

We shall use the following three embedding results.
The first lets us trade smoothness for $L^p$ and $\ell^r$ regularity,
whereas the second simplifies some arguments below.
The third one plays a crucial role in the weak convergence argument
by letting us pass to a convergent subsequence in a compact space.

\begin{theorem}[Besov embeddings]\label{thm:besov embedding}
Let $s \in \R$, $1 \leq q \leq p \leq \infty$, and
\[
s' \geq s + d\left( \frac 1 q - \frac 1 p \right).
\]
Then
\[
\norm{f}_{B^s_{p,r}(\rho)} \lesssim \norm{f}_{B^{s'}_{q,r}(\rho)}.
\]
The parameter $1 \leq r \leq \infty$ also satisfies
\[
\norm{f}_{B^{s}_{p,\infty}(\rho)}
\lesssim \norm{f}_{B^{s}_{p,r}(\rho)}
\lesssim \norm{f}_{B^{s+\varepsilon}_{p,\infty}(\rho)}.
\]
\end{theorem}
\begin{proof}
The first claim is an adaptation of
\cite[Proposition~2]{mourrat_global_2017} to our convention of polynomial weights,
and the second follows from Hölder's inequality.
\end{proof}

\begin{theorem}[Relation to Sobolev spaces]\label{thm:besov sobolev}
Let us define the \emph{fractional Sobolev space} $W^{s,p}$, $s \in \R$, $1 \leq p \leq \infty$,
through the norm
\[
\norm{f}_{W^{s,p}(\rho)} \coloneqq \norm{\rho \inorm\nabla^s f}_{L^p},
\]
where $\inorm\nabla^s$ is the Fourier multiplier with symbol $\xi \mapsto \inorm\xi^s$.
Then we have
\[
\norm{f}_{B^s_{p,\infty}(\rho)}
\lesssim \norm{f}_{W^{s,p}(\rho)}
\lesssim \norm{f}_{B^s_{p,1}(\rho)}.
\]
\end{theorem}
\begin{proof}
To show the left inequality, we write
\begin{equation}
2^{ks} \norm{\Delta_k f}_{L^p(\rho)}
= 2^{ks} \norm{\Delta_k \inorm\nabla^{-s} \inorm\nabla^s f}_{L^p}.
\end{equation}
By weighted Young's inequality \cite[Theorem~2.1]{mourrat_global_2017},
this can be bounded by
\begin{equation}
\norm{K_k}_{L^1(\rho\inv)} \norm{\inorm\nabla^s f}_{L^p(\rho)},
\end{equation}
where $K_k$ is the convolution kernel of $\Delta_k \inorm\nabla^{-s}$.
We only need to show that its norm is of order $2^{-ks}$,
as taking the $\ell^\infty$ norm over $k$ then gives the result.

Let us assume that $\alpha \in \N$.
We note that $\inorm{x}^\alpha \lesssim 1 + \abs x^\alpha$,
and that multiplication by $x$ corresponds to differentiation in Fourier space.
Hence
\begin{equation}
\begin{split}
&\mathrel{\phantom{=}} \int_{\R^d} \rho(x)\inv \abs{
    \int_{\R^d} e^{ix \cdot \xi} \inorm\xi^{-s} \hat\Delta_k(\xi) \diff\xi} \dx\\
&= \int_{\R^d} \rho(x) \abs{ \rho(x)^{-2}
    \int_{\R^d} e^{ix \cdot \xi} \inorm\xi^{-s} \hat\Delta_k(\xi) \diff\xi} \dx\\
&\lesssim \int_{\R^d} \rho(x) 
    \int_{\R^d} \abs{(1 + \partial_{\xi_1}^{2\alpha} + \dots + \partial_{\xi^d}^{2\alpha})
        [\inorm\xi^{-s} \hat\Delta_k(\xi)]} \diff\xi \dx.
\end{split}
\end{equation}
The inner integral is then of order $2^{-ks}$ by the support of $\Delta_k$
and the smoothness of $\inorm\xi^{-s} \hat\Delta_k(\xi)$,
and the outer integral is finite if $\rho$ is integrable.

For the right-hand inequality, we write
\begin{equation}
\norm{\inorm\nabla^s f}_{L^p(\rho)}
\leq \sum_{k \geq -1} \norm{\Delta_k \inorm\nabla^s f}_{L^p(\rho)}
= \sum_{k \geq -1} \norm{\Delta_k' \inorm\nabla^s \Delta_k f}_{L^p(\rho)},
\end{equation}
and repeat the above estimate on $\Delta_k' \inorm\nabla^s$,
where $\Delta_k'$ is a slightly larger dyadic multiplier that takes the value $1$
on the support of $\hat\Delta_k$.
\end{proof}

\begin{theorem}[Compact embedding]\label{thm:besov compactness}
Let $\rho_2$ and $\rho_1$ be polynomial weights with respective parameters $\alpha_2 > \alpha_1 > d$;
$p < \infty$, $1 \leq r \leq \infty$, and $s_2 < s_1$.
The space $B^{s_1}_{p,r}(\rho_1)$ then embeds compactly into
the less regular space $B^{s_2}_{p,r}(\rho_2)$.
\end{theorem}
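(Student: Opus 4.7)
The plan is to decompose the $B^{s_2}_{p,r}(\rho_2)$ norm into three parts: a high-frequency tail over Littlewood--Paley blocks $k \geq K$, a far-field tail on the region $\abs{x} > R$ for the remaining blocks $k < K$, and a ``core'' on the ball $B(0, R)$ for these finitely many blocks. The strict regularity gain $s_1 - s_2 > 0$ will control the first, the strict decay gain $\alpha_2 - \alpha_1 > 0$ the second, and Arzelà--Ascoli applied to the smooth functions $\Delta_k f_n$ the third. A diagonal extraction then yields a subsequence Cauchy in $B^{s_2}_{p,r}(\rho_2)$.

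Concretely, I would take a bounded sequence $(f_n)$ with $\norm{f_n}_{B^{s_1}_{p,r}(\rho_1)} \leq M$. Since $\alpha_2 > \alpha_1 \geq 0$ gives $\rho_2 \leq \rho_1$ pointwise, the high-frequency tail obeys
\[
\sum_{k \geq K} 2^{krs_2}\, \norm{\rho_2\, \Delta_k f_n}_{L^p}^r
\leq 2^{-Kr(s_1-s_2)} \sum_{k \geq K} 2^{krs_1}\, \norm{\rho_1\, \Delta_k f_n}_{L^p}^r
\leq 2^{-Kr(s_1-s_2)}\, M^r,
\]
uniformly in $n$, vanishing as $K \to \infty$. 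For each fixed $k < K$, the far-field tail obeys
\[
\norm{\rho_2\, \Delta_k f_n}_{L^p(\abs{x} > R)}
\leq \sup_{\abs{x} > R} \frac{\rho_2(x)}{\rho_1(x)}\; \norm{\rho_1\, \Delta_k f_n}_{L^p}
\lesssim \inorm{R}^{-(\alpha_2 - \alpha_1)}\, 2^{-ks_1}\, M,
\]
also uniformly in $n$, and vanishes as $R \to \infty$.

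For the core, I would use that $\Delta_k f_n = \varphi_k * f_n$ with $\varphi_k$ a Schwartz function. Applying Hölder's inequality after absorbing $\rho_1^{-1}$ into the rapid decay of $\varphi_k$, both $\Delta_k f_n$ and $\nabla \Delta_k f_n$ are uniformly bounded on $B(0, R)$ by constants depending only on $k$, $R$, and $M$. The family $(\Delta_k f_n)_n$ is thus equicontinuous and uniformly bounded on $B(0, R)$, so Arzelà--Ascoli yields a subsequence convergent uniformly, hence also in $L^p(B(0, R); \rho_2)$ since $\rho_2$ is bounded on this ball.

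Finally, for each $j \in \N$ I would choose $K_j$ and $R_j$ so that the two tail contributions each fall below $\varepsilon_j/4$ uniformly in $n$, where $\varepsilon_j \to 0$, and diagonally extract a subsequence on which $\Delta_k f_{n_m}$ converges in $L^p(B(0, R_j); \rho_2)$ for every $k < K_j$ and every $j$. The result is Cauchy in $B^{s_2}_{p,r}(\rho_2)$, proving compactness. The main obstacle is essentially organizational: ensuring the three uniform-in-$n$ estimates align properly during the diagonal extraction. Each of the strict hypotheses $s_2 < s_1$ and $\alpha_1 < \alpha_2$ is used exactly once, as the gain factor in one of the two tail estimates.
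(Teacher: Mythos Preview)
The paper does not supply its own proof of this theorem: Section~\ref{sec:besov} explicitly states that the Besov-space results are collected ``largely omit[ting] the proofs,'' referring instead to \cite{mourrat_global_2017}, \cite{gubinelli_pde_2021}, and \cite{bahouri_fourier_2011}. So there is nothing in the paper to compare against.

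Your argument is correct and is precisely the standard one found in those references. The three-part split (high frequencies via $s_1 - s_2 > 0$, spatial tail via $\alpha_2 - \alpha_1 > 0$, core via Arzel\`a--Ascoli on the smooth $\Delta_k f_n$) followed by diagonal extraction is exactly how Mourrat--Weber and Gubinelli--Hofmanov\'a proceed. Two minor remarks: (i) your high-frequency estimate is written for $r < \infty$; for $r = \infty$ replace the sum by a supremum and the same bound holds. (ii) You do not actually use the hypothesis $\alpha_1 > d$ anywhere, and indeed the compact embedding does not require it --- the paper carries this condition because the weight is needed to be integrable elsewhere (e.g.\ Lemma~\ref{thm:besov periodic into polynomial} and the weighted Young inequality in Section~\ref{sec:schrodinger}), not for compactness itself.
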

\begin{proof}
\cite[Proposition~11]{mourrat_global_2017}.
\end{proof}

For the finite-volume results, we also need periodic Besov spaces.
The theorems listed above work also in this case,
and in particular Theorem~\ref{thm:besov sobolev} holds with $\varepsilon = 0$.
Furthermore the following lemma shows that we can move between
periodic and polynomial-weight spaces easily.
We use the Mourrat--Weber \cite[Section~4.2]{mourrat_global_2017} definition of these spaces.

\begin{definition}[Periodic Besov space]
Given the set $\Lambda_L \coloneqq {[{-L},{L}]}^d$,
we define the space $B^s_{p,r}(\Lambda_L)$ as the completion
of $2L$-periodic $C^\infty(\R^d)$ functions with respect to the Besov norm
\[
\norm{f}_{B^s_{p,r}(\Lambda_L)}
\coloneqq \bignorm{ 2^{ks}\, \norm{\I_{\Lambda_L}(x) [\Delta_k f](x)}_{L^p_x} }_{\ell^r_k}.
\]
\end{definition}

\begin{lemma}[Embedding into polynomial-weight space]\label{thm:besov periodic into polynomial}
Let $\rho$ be a polynomial weight with parameter $\alpha > d$.
Let $f \in C^\infty(\R^d)$ be $2L$-periodic for $L \geq 1$.
Then
\[
\norm{f}_{B^s_{p,r}(\rho)}
\lesssim \norm{f}_{B^s_{p,r}(\Lambda_L)}
\lesssim L^{\alpha} \norm{f}_{B^s_{p,r}(\rho)}.
\]
These bounds are uniform in $L \geq 1$, $s \in \R$, and $1 \leq p, r \leq \infty$.
\end{lemma}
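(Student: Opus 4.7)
Both inequalities will follow from the corresponding single-block estimate. Since each Littlewood--Paley operator $\Delta_k$ is a Fourier multiplier, it commutes with translations and therefore preserves $2L$-periodicity, so $g \coloneqq \Delta_k f$ is again $2L$-periodic. The plan is thus to establish
\[
\norm{\rho}_{L^1} \norm{\rho g}_{L^p(\R^d)} \lesssim \norm{g}_{L^p(\Lambda_L)} \lesssim L^\alpha \norm{\rho g}_{L^p(\R^d)}
\]
uniformly in $k$, for any $2L$-periodic $g$. Multiplying by $2^{ks}$ and taking the $\ell^r$ norm in $k$ then reproduces the Besov norms on both sides.

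The right-hand inequality is almost immediate: for $x \in \Lambda_L$ the pointwise lower bound $\rho(x) = \inorm{x}^{-\alpha} \geq (1 + dL^2)^{-\alpha/2} \gtrsim L^{-\alpha}$ holds, so $\rho^{-1} \lesssim L^\alpha$ on $\Lambda_L$ and hence
\[
\norm{g}_{L^p(\Lambda_L)} \leq \norm{\rho^{-1}}_{L^\infty(\Lambda_L)} \norm{\rho g}_{L^p(\Lambda_L)} \lesssim L^\alpha \norm{\rho g}_{L^p(\R^d)}.
\]

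For the left-hand inequality I would tile $\R^d = \bigsqcup_{j \in \Z^d} (\Lambda_L + 2Lj)$ and use periodicity of $\abs{g}$ to rewrite
\[
\norm{\rho g}_{L^p(\R^d)}^p = \int_{\Lambda_L} \abs{g(x)}^p \sum_{j \in \Z^d} \rho(x + 2Lj)^p \dx.
\]
Provided $\alpha p > d$ (ensured by taking $\alpha$ sufficiently large), the inner sum is a Riemann sum for $(2L)^{-d} \int_{\R^d} \rho(y)^p \dy$ and is bounded uniformly in $x \in \Lambda_L$ by $C (2L)^{-d} \norm{\rho^p}_{L^1}$. Since $\rho \leq 1$ and $p \geq 1$ one has $\norm{\rho^p}_{L^1} \leq \norm{\rho}_{L^1}$, so after taking the $p$-th root the bound reads $\norm{\rho g}_{L^p(\R^d)} \lesssim L^{-d/p} \norm{\rho}_{L^1}^{1/p} \norm{g}_{L^p(\Lambda_L)}$, which rearranges to the claimed form once $L$-independent factors are absorbed into the implicit constant.

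The only real technical point is the Riemann-sum estimate for the weight tail $\sum_{j} \rho(x+2Lj)^p$, and this is routine: the sum is dominated by a geometrically decaying series in $\abs{j}$ whose comparison with $\int \rho^p$ follows from monotonicity of $\rho$ on annuli. Everything else is bookkeeping.
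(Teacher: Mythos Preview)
Your overall strategy---reduce to single Littlewood--Paley blocks, tile $\R^d$ by translates of $\Lambda_L$, and bound the resulting weight sum---is the same as the paper's, and the right-hand inequality is handled correctly and exactly as the paper does.

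The left-hand inequality, however, contains a genuine error in the Riemann-sum step. You assert that
\[
\sum_{j\in\Z^d}\rho(x+2Lj)^p \;\lesssim\; (2L)^{-d}\,\norm{\rho^p}_{L^1},
\]
but this fails for large $L$: at $x=0$ the $j=0$ term alone equals $\rho(0)^p=1$, while the right-hand side tends to $0$ as $L\to\infty$. A Riemann sum with mesh $2L$ approximates $\int\rho^p$ only when the mesh is \emph{fine}; here $L$ is large and the sum overestimates $(2L)^{-d}\int\rho^p$. Consequently the bound $\norm{\rho g}_{L^p(\R^d)}\lesssim L^{-d/p}\norm{g}_{L^p(\Lambda_L)}$ you derive from it is false.

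The repair is easy and is essentially what the paper does (there at the level of the full Besov norm, using $\sup_{\Lambda_L^j}\rho$ on each tile rather than $\rho(x+2Lj)^p$): for $x\in\Lambda_L$ and $j\neq 0$ one has $|x+2Lj|\gtrsim L|j|$, so
\[
\sum_{j\in\Z^d}\rho(x+2Lj)^p \;\le\; 1 + C\sum_{j\neq 0}(L|j|)^{-\alpha p} \;\lesssim\; 1
\]
uniformly in $x\in\Lambda_L$ and $L\ge 1$, provided $\alpha p>d$. This yields $\norm{\rho g}_{L^p(\R^d)}\lesssim\norm{g}_{L^p(\Lambda_L)}$, which is all that is needed; the factor $\norm{\rho}_{L^1}$ in the statement is an $L$-independent constant recording the requirement $\alpha>d$. (Incidentally, the decay is polynomial in $|j|$, not geometric.)

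One further point the paper treats and you omit: since $B^s_{p,r}(\rho)$ is defined as the \emph{completion} of $C_c^\infty(\R^d)$, it is not automatic that a periodic $C^\infty$ function lies in it, even if the norm expression is finite. The paper constructs an explicit Cauchy sequence of smoothly truncated periodic extensions to justify membership; your block-by-block computation presupposes this.
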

\begin{proof}
Let us begin with the right-hand-side inequality,
and first consider the $L^p$ norm of a single Littlewood--Paley block:
\begin{equation}
\begin{split}
\norm{\I_{\Lambda_L}(x) [\Delta_k f](x)}_{L^p}
&\leq \left( \sup_{x \in \Lambda_L} (1 + \abs x^2)^{\alpha/2} \right)
    \norm{\rho(x) \I_{\Lambda_L}(x) [\Delta_k f](x)}_{L^p}\\
&\leq (2L^2)^{\alpha/2}
    \norm{\rho(x) [\Delta_k f](x)}_{L^p}.
\end{split}
\end{equation}
This estimate does not depend on $k$ or $p$.
As we multiply by $2^{ks}$ and take the $\ell^r$ norm over $k$,
the prefactor can be moved out.

To get the left-hand side inequality, we apply the triangle inequality.
Let us denote by $\Lambda_L^j$ the translates $\Lambda_L + j2L$.
Then
\begin{equation}
\begin{split}
\norm{\rho(x) [\Delta_k f](x)}_{L^p}
&\leq \sum_{j \in \Z^d} \norm{\rho(x) \I_{\Lambda_L^j}(x) [\Delta_k f](x)}_{L^p}\\
&\leq \norm{\I_{\Lambda_L}(x) [\Delta_k f](x)}_{L^p}
    \sum_{j \in \Z^d} \sup_{x \in \Lambda_L^j} \rho(x)\\
&\leq \norm{\I_{\Lambda_L}(x) [\Delta_k f](x)}_{L^p}
    \left(1 + (2L)^{-\alpha}
        \hspace{-0.8em} \sum_{j \in \Z^d \setminus \{0\}} \hspace{-0.4em} \abs{j}^{-\alpha} \right).
\end{split}
\end{equation}
If $\alpha > d$, then the sum is finite.
Again, this estimate is uniform in $k$.

Finally, let us note that we defined $B^s_{p,r}(\rho)$ as
the closure of $C^\infty_c$ functions with respect to the norm;
it is not \emph{a priori} obvious that the periodic $f$ belongs to this closure.
We can however approximate $f$ with $n$ repeats of $f \I_{\Lambda_L}$
(with a smooth cutoff in the tails).
A modification of the preceding computation shows that the approximation converges
in $B^s_{p,r}(\rho)$ norm as $n \to \infty$.
\end{proof}

Finally, the following lemma about Besov regularity of indicator functions will be used
in Section~\ref{sec:globalization}.
\begin{lemma}[Besov norm of indicator]\label{thm:characteristic function}
For $1 < p < \infty$ and any $K > 1$,
the indicators of balls $B(0,R) \subset \R^d$ satisfy
\[
\sup_{R \leq K}\norm{\I_{B(0,R)}}_{B^{1/p}_{p,\infty}(\R^d)} \lesssim K^{d/p}.
\] 
\end{lemma}
\begin{proof}
By the first theorem in \cite[Section 2.6.1]{triebel_theory_1992} we have
\begin{equation} 
\norm{f}_{B^{s}_{p,\infty}}
\lesssim \norm{f}_{L^{p}}
    + \sup_{\abs h \leq 1} \bignorm{\frac{f(x+h)-f(x)}{h^{s}}}_{L^{p}}.
\end{equation}
Now clearly $\sup_{R\leq K} \norm{\I_{B(0,R)}}_{L^{p}} \lesssim K^{d/p}$, and 
$\abs{\I_{B(0,R)}(x+h)-\I_{B(0,R)}(x)}$
is bounded by $1$ and nonzero only in $\partial B(0,R) + B(0, h)$.
This set has measure bounded by $C_d K^{d-1} \abs h$.
Thus
\begin{equation}
\bignorm{\frac{f(x+h)-f(x)}{h^{s}}}_{L^{p}} \leq K^{(d-1)/p} \abs h^{1/p} \abs h^{-s},
\end{equation}
which is bounded by $K^{(d-1)/p}$ if $s \leq 1/p$.
\end{proof}

\begin{remark}
Let us remark that the sharp Fourier cutoff $P_N$ to $B(0, 2^N)$ is bounded uniformly in $N$
on $L^2$ and $H^s$ equipped with flat weight over $\Lambda_L$ or $\R^2$.
This is not the case in other $L^p$ spaces when $p \neq 2$.

We need to use a sharp cutoff to apply invariance of measure in Section~\ref{sec:periodic stoch}.
A smooth cutoff would have better analytic properties
but not be compatible with our dynamics
(see also~\cite[p.~17]{Bringmann_invariant_2022-1}).
\end{remark}

\section{Stochastic quantization}
\label{sec:stochastic}

In this section we construct the $\phi^4$ measure (later denoted $\mu$) in the infinite domain $\R^2$
equipped with a suitable weight.
This construction is well-known in the literature of stochastic quantization,
and we only outline the results we will need.

We define the stochastic objects both on
the periodic space $\Lambda_L \coloneqq {[{-L}, {L}]}^2$ and the full space $\R^2$.
The basic building block, Gaussian free field, is straightforwardly defined in both cases,
whereas for the $\phi^4_2$ we need to take a weak limit as $L \to \infty$.

\begin{remark}
Since we use the complex $\phi^4$ measure in Section~\ref{sec:schrodinger},
we state results here with respect to both real and complex scalar fields.
The complex case is much less frequent in the literature,
but the basic ideas are essentially same.
It is however important to notice that the definition of some objects (like $\wick{u \abs u^2}$)
depends on the choice of scalar field.
\end{remark}

\subsection{Gaussian free field}\label{sec:GFF}

\begin{definition}[Gaussian free field]
The Gaussian free field $\nu_L$ with mass $m^2 > 0$
is the Gaussian measure on $\mathcal S'(\Lambda_{L})$ with covariance
\[ \int \langle f, Z_L \rangle \langle g, Z_L \rangle \diff \nu_L (Z_L)
= \langle f, (m^2 - \Delta)^{- 1} g \rangle_{L^2 (\Lambda_{L})} .
\]
Similarly we can introduce the infinite-volume massive GFF $\nu$
supported on the space of tempered distributions $\mathcal S'(\R^2)$, with covariance
\[ \int \langle f, Z \rangle \langle g, Z \rangle \diff \nu_L (Z)
= \langle f, (m^2 - \Delta)^{- 1} g \rangle_{L^2 (\R^2)} . \]
\end{definition}

\begin{definition}[Notation for samples]
We will denote random variables from $\nu_{L}$ or $\nu$ by $Z_{L}$ and $Z$.
We will also write their projections as $Z_{L,N} \coloneqq P_{N}Z_{L}$ and $Z_{N} \coloneqq P_{N}Z$.
\end{definition}

Note that we can view $\nu_L$ as a measure on $\mathcal S' (\R^2)$ by periodic
extension. The following proposition is proved in \cite[Theorem~5.1]{mourrat_global_2017}.

\begin{theorem}[Uniform bounds for GFF]\label{thm:gff uniform bounds}
$\nu_L$ and $\nu$ have samples almost surely in $\mathcal{C}^{- \varepsilon} (\rho)$,
and for all $p < \infty$ the expectations are bounded uniformly in $L$:
\[
\sup_L \int \norm{Z_L}_{\mathcal C^{- \varepsilon} (\rho)}^p \diff \nu_L(Z_L) < \infty,
\quad
\int \norm{Z}_{\mathcal C^{- \varepsilon} (\rho)}^p \diff \nu(Z) < \infty.
\]
\end{theorem}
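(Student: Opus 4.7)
The plan is to apply the standard Gaussian regularity template in the weighted Besov setting. Both $\nu_L$ (viewed on $\R^2$ by periodic extension) and $\nu$ are translation-invariant Gaussian measures, so the pointwise second moment
\[
\sigma_k^2 \coloneqq \E|\Delta_k \phi(x)|^2
\]
is independent of $x \in \R^2$. A direct Fourier computation shows
\[
\sigma_k^2 = \int_{\R^2} \frac{|\hat\varphi_k(\xi)|^2}{m^2 + |\xi|^2}\, d\xi
\]
in the infinite-volume case, where $\hat\varphi_k$ is the Fourier symbol of $\Delta_k$, and the analogous normalised sum over the dual lattice $(\pi/L)\Z^2$ in the periodic case. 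Since $\hat\varphi_k$ is supported on an annulus of radius and thickness $\sim 2^k$ for $k \geq 0$, and on a small ball for $k = -1$ where $m > 0$ keeps the kernel bounded, in all cases $\sigma_k^2 \lesssim 1$, uniformly in $k \geq -1$ and (for $L$ bounded below) in $L$.

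Fix $p$ large enough that $2/p \leq \varepsilon/2$, and take the weight parameter $\alpha$ large enough that $\rho \in L^p$. Theorem~\ref{thm:besov embedding} and the monotonicity of the $\ell^r$ norm in the Besov scale give
\[
\|\phi\|_{\mathcal{C}^{-\varepsilon}(\rho)}
\lesssim \|\phi\|_{B^{-\varepsilon/2}_{p,\infty}(\rho)}
\leq \|\phi\|_{B^{-\varepsilon/2}_{p,p}(\rho)},
\]
so raising to the $p$-th power and taking expectation yields
\[
\E\|\phi\|_{\mathcal{C}^{-\varepsilon}(\rho)}^p
\lesssim \sum_{k \geq -1} 2^{-kp\varepsilon/2}\, \E\|\Delta_k\phi\|_{L^p(\rho)}^p.
\]
Fubini and Gaussian hypercontractivity $\E|\Delta_k\phi(x)|^p \lesssim_p \sigma_k^p$ produce $\E\|\Delta_k\phi\|_{L^p(\rho)}^p \lesssim \sigma_k^p \|\rho\|_{L^p}^p \lesssim \|\rho\|_{L^p}^p$, hence
\[
\E\|\phi\|_{\mathcal{C}^{-\varepsilon}(\rho)}^p
\lesssim \|\rho\|_{L^p}^p \sum_{k \geq -1} 2^{-kp\varepsilon/2} < \infty,
\]
with a constant independent of $L$. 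Bounds for arbitrary $p' < \infty$ follow by choosing $p \geq p'$ and applying H\"older's inequality in $\omega$, and almost-sure membership in $\mathcal{C}^{-\varepsilon}(\rho)$ is immediate from finiteness of one such moment.

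The main technical hurdle is the uniformity in $L$ of the bound $\sigma_k^2 \lesssim 1$. When $2^k \gtrsim 1/L$, one argues by Riemann-sum comparison with the continuum integral; when $2^k \ll 1/L$, the annulus contains only $O(1)$ dual-lattice points whose contribution is controlled using $(m^2 + |\xi|^2)^{-1} \leq m^{-2}$ together with the $(2L)^{-2}$ normalisation factor, and is in fact small for large $L$. Once this uniform second-moment estimate is secured, the remainder of the argument is purely analytic and applies identically to $\nu_L$ and $\nu$.
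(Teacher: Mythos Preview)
The paper does not supply its own proof here; it simply refers the reader to \cite[Theorem~5.1]{mourrat_global_2017}. Your argument is correct and is precisely the standard one found in that reference: embed $\mathcal C^{-\varepsilon}(\rho)$ into $B^{-\varepsilon/2}_{p,p}(\rho)$ for $p$ large, swap expectation and spatial integration by Fubini, use Gaussian hypercontractivity to reduce to the pointwise variance $\sigma_k^2 = \E|\Delta_k\phi(0)|^2$, and then bound $\sigma_k^2 \lesssim 1$ uniformly in $k$ and $L$ by comparing the lattice sum to the continuum integral. One small remark on your last paragraph: in the regime $2^k \ll 1/L$ and $k \geq 0$ the annulus in fact contains \emph{no} nonzero dual-lattice points, so $\sigma_k^2 = 0$; only the $k=-1$ block sees the origin, where the mass $m^2>0$ gives the required bound. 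This sharpens but does not change your conclusion.
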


We can sample from the GFF by realizing it as
\begin{equation}
Z_{L} = \frac{1}{L} \sum_{n \in L\inv \Z^{2}}\frac{g_{n} e_n}{(m^2+|n|^{2})^{1/2}},
\end{equation}
where $g_{n}$ are standard complex Gaussians
and $e_n(x) \coloneqq \exp(2\pi i n \cdot x)$.
In case of the real scalar field we require $g_{-n} = \overline{g_n}$,
but otherwise $g_n$ are independent.
For the full-space case we can write
\begin{equation}
Z = \int_{\R^{2}} \frac{\xi(y) e_y}{(m^2+|y|^{2})^{1/2}} \dy
\end{equation}
where $\xi$ is a white noise as defined below.
  
\begin{definition}[White noise]\label{def:white noise}
Let $X = \Lambda_L$ or $X = \R^2$.
The white noise $\xi$ is a Gaussian process on $\mathcal{S}'(X)$ with covariance
\[
\E [\dual{f, \xi}_{L^{2}(X)} \dual{g, \xi}_{L^{2}(X)}]
= \dual{f, g}_{L^{2}(X)}.
\]
\end{definition}

The argument of Theorem~\ref{thm:gff uniform bounds}
also gives that the white noise has bounded expectation in $\mathcal C^{-1-\varepsilon}(\rho)$.

The GFF measure $\nu_L$ does not have samples of positive regularity. This means
that taking powers of distributions sampled from $\nu_L$ does not make sense.
Yet the Gaussian structure of the randomness allows us to still define
powers of the field by so-called Wick ordering.

\begin{definition}[Wick ordering, periodic space]
Let $a_{L, N} = \E \abs{Z_{L, N}(0)}^2$.
When the scalar field is real, we define the first Wick powers of $Z_{L,N}$ as
\begin{align*}
\wick{Z^3_{L, N}}_L & = Z^3_{L, N} - 3 a_{L, N} Z_{L, N},\\
\wick{Z^2_{L, N}}_L & = Z^2_{L, N} - a_{L, N},\\
\wick{Z_{L, N}}_L & = Z_{L, N}.
\end{align*}
\end{definition}

This definition is based on Hermite polynomials,
and higher-order powers can be defined accordingly.
As $N \to \infty$, the constants $a_{L, N}$ diverge logarithmically, and
the counter\-terms cancel the divergence of $Z^k_{L,N}$.
For more details, see e.g.~\cite[Chapter~I]{simon_pphi_2_1974} or~\cite{glimm_quantum_1987}.

Wick-ordered polynomials are defined by Wick-ordering each monomial term separately.
We remark that $\E \abs{Z_{L,N}(x)}^2$ does not depend on the choice of $x$
since the GFF is translation-invariant.

It will be useful to define the Wick powers with a renormalization constant
that is independent of $L$.
For this purpose we will use the expectation of the full-space GFF.

\begin{definition}[Wick ordering, full space]\label{def:wick full space}
When the scalar field is real, we denote $a_N = \E \abs{Z_N (0)}^2$ and define
\begin{align*}
\wick{Z^3_{L, N}} & = Z^3_{L, N} - 3 a_N Z_{L, N},\\
\wick{Z^2_{L, N}} & = Z^2_{L,N} - a_N,\\
\wick{Z_{L, N}} & = Z_{L,N}.
\end{align*}
\end{definition}

The difference between these two renormalizations is a polynomial of strictly lower degree;
for the third Wick powers it is
\begin{equation}
\wick{Z^3_{L, N}}_L - \wick{Z^3_{L, N}}
= - 3 (a_{L, N} - a_N) Z_{L, N}.
\end{equation}
The next lemma asserts that the difference of renormalization constants goes to zero as $N, L \to \infty$.
This lets us always take Wick ordering with respect to the full-space GFF.

\begin{lemma}[Difference of renormalization constants]\label{lemma:wick-ordering-rem}
We have
\[
| a_{L, N} - a_N | \lesssim \frac{1}{N} + \frac{1}{L},
\quad \text{when } L > 1,\; N \in \N.
\]
\end{lemma}
\begin{proof}
By covariance of the continuum white noise, the second renormalization constant is
\begin{equation}
a_{N}
= \int_{\abs x \leq N} \frac{1}{m^2 + \abs x^2} \dx.
\end{equation}
The first renormalization constant can be written as
\begin{equation}
a_{L,N}
= \frac{1}{L^2} \sum_{\substack{n \in L\inv \Z^{2}\\ \abs n \leq N}} \frac{1}{m^2 + \abs n^2}
= \int_{S_{L,N}} \frac{1}{m^2 + \abs{n(x)}^2} \dx,
\end{equation}
where $P(n)$ is the rectangle $n + [{0}, {1/L})^2$,
$n(x)$ is the unique $n\in L^{-1}\Z^{2}$ such that $x \in P(n)$,
and the collection of rectangles is denoted by
\begin{equation}
S_{L,N} \coloneqq \bigcup_{\substack{n \in L^{-1}\Z^2,\\ |n| \leq N}} P(n).
\end{equation}

Observe that by triangle inequality $B(0, N-2/L) \subset S_{L,N} \subset B(0, N+2/L)$.
Thus we can estimate 
\begin{equation} 
\begin{split}
\abs{a_N - a_{L,N}}
&\leq \int_{B(0,N-2/L)} \abs{\frac{1}{m^2 + \abs x^2}-\frac{1}{m^2 + \abs{n(x)}^2}} \dx\\
&\quad {}+ \int_{\mathcal R} \left[ \frac{1}{m^2 + \abs x^2} + \frac{1}{m^2 + \abs{n(x)}^2} \right] \dx,
\end{split}
\end{equation}
where we denote the annulus $B(0,N+2/L) \setminus B(0,N-2/L)$ by $\mathcal R$.
The first term is estimated by 
\begin{equation}
\begin{split}
\int_{\R^2} \frac{\abs{\abs x^2 - \abs{n(x)}^2}}{(m^2 + \abs{n(x)}^2)(m^2 + \abs x^2)} \dx
&= \int_{\R^2} \frac{\big|{\abs x - \abs{n(x)}}\big| (\abs x + \abs{n(x)})}
    {(m^2 + \abs{n(x)}^2)(m^2 + \abs x^2)} \dx\\
&\lesssim \frac{1}{L} \int_{\R^2} \frac{1}{1 + \abs x^3} \dx,
\end{split}
\end{equation}
since $\abs x \simeq \abs{n(x)}$ away from the origin.
For the same reason,
\begin{equation}
\int_{\mathcal R} \left[ \frac{1}{m^2 + \abs x^2} + \frac{1}{m^2 + \abs{n(x)}^2} \right] \dx
\lesssim \frac{\abs{\mathcal R}}{N^{2}}
\lesssim \frac{1}{NL}.
\qedhere
\end{equation}
\end{proof}

Let us then define the complex renormalized nonlinearity used in~\eqref{eq:nls}.
The idea is to renormalize the real and imaginary parts of the GFF separately,
as they are independent.
See~\cite{oh_pedestrian_2018} for more exposition.
In fact the same argument gives all $\wick{\abs{Z_{L,N}}^{2n}}$ for $n \in \N$,
but we only use $\wick{\abs{Z_{L,N}}^2}$ in what follows.

\begin{lemma}[Wick-ordered complex objects]\label{thm:wick complex}
When the scalar field is complex,
\[
\begin{split}
\wick{\abs{Z_{L,N}}^2} &= \abs{Z_{L,N}}^2 - a_N,\\
\wick{Z_{L,N} \abs{Z_{L,N}}^2} &= Z_{L,N} \abs{Z_{L,N}}^2 - 2a_N Z_{L,N}.
\end{split}
\]
\end{lemma}
\begin{proof}
Let us abbreviate $R = \operatorname{Re} Z_{L,N}$ and $I = \operatorname{Im} Z_{L,N}$.
It then follows from the definition that $R$ and $I$ are independent real GFFs
such that $\E R(x)^2 = \E I(x)^2 = a_N/2$.
Then
\begin{equation}
\wick{\abs{Z_{L,N}}^2}
= \wick{R^2 + I^2}
= R^2 - \frac{a_N}{2} + I^2 - \frac{a_N}{2},
\end{equation}
from which the first statement follows.
Similarly,
\begin{equation}\label{eq:wick complex expansion}
\begin{split}
\wick{Z_L \abs{Z_L}^2}
&= \wick{R (R^2 + I^2) + iI(R^2 + I^2)}\\
&= \wick{R^3} + i \wick{I^3} + \wick{R I^2} + i \wick{R^2 I}.
\end{split}
\end{equation}
By the Wick product expansion (see e.g.~\cite[p.~12]{simon_pphi_2_1974}) we have
\begin{equation}
\wick{R I^2} = RI^2 - 2 \E [R I] - R \E I^2 = R (I^2 - a_N / 2),
\end{equation}
and similarly for $\wick{R^2 I}$.
Hence
\begin{equation}
\eqref{eq:wick complex expansion}
= (R^3 + iI^3) - \frac{3a_N(R + iI)}{2} + (RI^2 + iR^2 I) - \frac{a_N(R + iI)}{2},
\end{equation}
which is exactly the second proposition.
\end{proof}

We can now state that the relevant Wick powers of the Gaussian free field are well-defined.
Furthermore, we show that the result extends to sufficiently regular perturbations of the GFF,
of which the $\phi^4$ measure will be an example.

\begin{lemma}[Moments of GFF powers]\label{thm:wick gff moments}
First consider the real scalar field.
For any $p < \infty$ and $j = 1, 2, \ldots$ we have
\[
\sup_N \E \left[\norm{\wick{Z^j_{L, N}}}^p_{\mathcal C^{-\varepsilon}(\rho)} \right]
< \infty.
\]
The sequence $\wick{Z^j_{L, N}}$ converges in $L^p (\nu_L, C^{-\varepsilon}(\rho))$ to a
well-defined limit $\wick{Z_L^j}$ as $N \to \infty$.
The limit satisfies
\[
\sup_L \E \left[\norm{\wick{Z^j_L}}^p_{\mathcal C^{-\varepsilon}(\rho)} \right]
< \infty.
\]
In the complex case the same convergence result holds for
$Z_{L,N}^2$, $\wick{\abs{Z_{L,N}}^2}$, and $Z_{L,N} \abs{Z_{L,N}}^2$,
and the respective limits satisfy
\[
\sup_L \E \left[\norm{Z_L^2}^p_{\mathcal C^{-\varepsilon}(\rho)}
+ \norm{\wick{\abs{Z_L}^2}}^p_{\mathcal C^{-\varepsilon}(\rho)}
+ \norm{\wick{Z_L \abs{Z_L}^2}}^p_{\mathcal C^{-\varepsilon}(\rho)} \right]
< \infty.
\]
\end{lemma}
\begin{proof}
The proof of the first statement is a variation of \cite[Lemma~3.2]{da_prato_strong_2003},
and the infinite-volume bound is done in \cite[Section~5]{mourrat_global_2017}.

The complex results then follow from these real-valued objects.
Let us again denote $R = \operatorname{Re} Z_L$ and $I = \operatorname{Im} Z_L$.
By Lemma~\ref{thm:wick complex} we have that
\begin{equation}
\wick{\smallabs{Z_L^2}} = \wick{R^2} + \wick{I^2},
\end{equation}
so its moment bound follows immediately from the real case.

The second power can be written as
\begin{equation}
Z_L^2 = R^2 + 2i R I - I^2 = \wick{R^2} + 2i R I - \wick{I^2},
\end{equation}
so we need to show the bound for $RI$.
This is a matter of adapting the proof of \cite[Theorem~5.1]{mourrat_global_2017}
using two observations:
\begin{itemize}
\item $RI$ belongs to the second Wiener chaos
    (over a tensorized space so that the Gaussian has two independent real components)
    so hypercontractivity can be used;
\item In the notation of \cite[Lemma~9]{mourrat_global_2017}, we can compute
\begin{equation}
\begin{split}
&\mathrel{\phantom{=}} \E \abs{[RI](t, \eta_k(\mathord{\cdot} - x))}^2\\
&= \E \int_{\R^2} \int_{\R^2} \eta_k(x_1 - x) \eta_k(x_2 - x)
    R(x_1) R(x_2) I(x_1) I(x_2) \dx_1 \dx_2\\
&= \int_{\R^2} \int_{\R^2} \eta_k(x_1 - x) \eta_k(x_2 - x)
    \E [R(x_1) R(x_2)] \E [I(x_1) I(x_2)] \dx_1 \dx_2\\
&= \int_{\R^2} \int_{\R^2} \eta_k(x_1 - x) \eta_k(x_2 - x)
    \mathscr K(t, t, x_1 - x_2)^2 \dx_1 \dx_2,
\end{split}
\end{equation}
where $\mathscr K(t, t, x_1 - x_2)^2$ is the same kernel as for $\wick{R^2}$.
\end{itemize}
From here on, the proof is hence identical to that of $\wick{R^2}$.

The finite-volume bound and convergence of $Z_{L,N} \wick{\abs{Z_{L,N}}^2}$
are shown in~\cite[Proposition~1.3]{oh_pedestrian_2018}.
With the expansion~\eqref{eq:wick complex expansion} and the same observations as above,
the infinite-volume bounds are analogous to those for $\wick{R^3}$.
\end{proof}

\begin{lemma}[Wick powers of perturbations]\label{thm:wick binomial}
Suppose the real scalar field.
Let $\psi \in L^{2qj}(\nu_L, B_{pj, p}^{2 \varepsilon}(\rho))$,
where $\varepsilon > 0$ and $1 \leq p, q < \infty$.
Then
\[
\wick{(Z_L + \psi)^j} = \sum_{i = 0}^j \binom j i \wick{Z^{j - i}_L} \psi^i
\]
as an element of $L^q(\nu_L, B_{p, p}^{-\varepsilon}(\rho^{j+1}))$.
\end{lemma}
\begin{proof}
It follows from properties of Hermite polynomials that
\begin{equation}
\wick{(Z_{L, N} + \psi)^j}
= \sum_{i = 0}^j \binom j i \wick{Z^{j - i}_{L, N}} \psi^i.
\end{equation}
Hence by Theorem~\ref{thm:besov multiplication} we have
\begin{equation}
\begin{split}
\E \norm{\wick{(Z_{L, N} + \psi)^j}}_{B_{p, p}^{-\varepsilon}(\rho^{j+1})}^q
&\lesssim \sum_{i = 0}^j
    \norm{\wick{Z^{j - i}_{L, N}} \psi^i}_{B_{p, p}^{-\varepsilon}(\rho^{j+1})}^q\\
&\lesssim \sum_{i = 0}^j
    \norm{\wick{Z^{j - i}_{L, N}}}_{\mathcal C^{-\varepsilon}(\rho)}^q
    \norm{\psi}_{B_{pi, p}^{2 \varepsilon}(\rho^{j/i})}^{qi},
\end{split}
\end{equation}
and the claim for finite $N$ follows by Jensen's inequality and Lemma~\ref{thm:wick gff moments}.
Since multiplication is a continuous operation, the claim holds also as $N \to \infty$.
\end{proof}

The complex case leads to longer expressions;
for us it suffices to expand
\begin{equation}\label{eq:wick abs perturbation}
\wick{(Z_L + \psi) \abs{Z_L + \psi}^2}
= (Z_L + \psi) \left[ (Z_L + \psi) \overline{(Z_L + \psi)} - 2a_N \right]
\end{equation}
and redistribute the renormalization constant.
This is done in~\eqref{eq:sq tested complex} below.

The following result lets us compute covariances of Wick powers by passing to a Green's function.
For the proof, see e.g.\ \cite[Theorem~I.3]{simon_pphi_2_1974}.

\begin{theorem}[Wick's theorem]
If $X$ and $Y$ are Gaussian, then
\[
\E [\wick{X^m} \; \wick{Y^n}] = \I_{m=n} n! \left( \E [XY] \right)^n.
\]
\end{theorem}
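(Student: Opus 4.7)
The plan is to derive Wick's theorem from the generating-function identity for Wick powers of a centered Gaussian. For any centered Gaussian $X$, the Wick powers introduced via Hermite polynomials in Section~\ref{sec:stochastic} satisfy
\[
\sum_{n \geq 0} \frac{t^n}{n!}\wick{X^n} = \exp\!\left(tX - \tfrac{t^2}{2}\E[X^2]\right),
\]
which follows directly from the classical generating function $\sum_n \frac{t^n}{n!}H_n(y) = \exp(ty - t^2/2)$ for probabilists' Hermite polynomials, together with the scaling $\wick{X^n} = \sigma^n H_n(X/\sigma)$ where $\sigma^2 = \E[X^2]$.

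First I would form the product of these generating functions for $X$ and $Y$ with formal parameters $t$ and $s$, and take expectations on both sides, obtaining
\[
\sum_{m,n \geq 0}\frac{t^m s^n}{m!\,n!}\E[\wick{X^m}\wick{Y^n}]
= \E\!\left[\exp\!\left(tX + sY - \tfrac{t^2}{2}\E[X^2] - \tfrac{s^2}{2}\E[Y^2]\right)\right].
\]
Since $(X, Y)$ is jointly Gaussian, $tX + sY$ is a centered Gaussian of variance $t^2\E[X^2] + 2ts\,\E[XY] + s^2\E[Y^2]$, so its moment generating function equals $\exp\!\left(\tfrac{1}{2}(t^2\E[X^2] + 2ts\,\E[XY] + s^2\E[Y^2])\right)$; the $t^2$ and $s^2$ Wick counterterms cancel exactly, leaving $\exp(ts\,\E[XY])$ on the right.

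Finally I would expand $\exp(ts\,\E[XY]) = \sum_{n \geq 0} \frac{(ts)^n}{n!}(\E[XY])^n$ and match coefficients of $t^m s^n$ on both sides. This yields at once the stronger orthogonality statement $\E[\wick{X^m}\wick{Y^n}] = 0$ for $m \neq n$, and the desired $\E[\wick{X^n}\wick{Y^n}] = n!\,(\E[XY])^n$ when $m = n$. The only care point is justifying the interchange of expectation and infinite summation and then extracting coefficients; this is routine because the exponential moment generating function of any jointly Gaussian vector is finite in a neighbourhood of the origin, so dominated convergence applies for $|t|, |s|$ sufficiently small, and two absolutely convergent power series that agree on a neighbourhood of $(0,0)$ must have identical coefficients.
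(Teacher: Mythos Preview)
Your proof is correct and is the standard generating-function argument for this identity. The paper does not actually give its own proof of Wick's theorem; it simply cites \cite[Theorem~I.3]{simon_pphi_2_1974}, so there is nothing substantive to compare against. One small remark: you implicitly assume $(X,Y)$ is jointly Gaussian when computing the variance of $tX+sY$, whereas the statement as written says only that $X$ and $Y$ are Gaussian; this is the intended hypothesis in context (the application is to values of a Gaussian free field at different points), but it would be worth making explicit since the identity can fail for marginally Gaussian but not jointly Gaussian pairs.
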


As an application of Wick's theorem,
we see that we can approximate the third Wick power by continuous maps.
We use this lemma to prove that sequences of periodic solutions to~\eqref{eq:nlw} or~\eqref{eq:nls}
satisfy the PDEs also in the limit.
The proof is somewhat technical, and we leave it to Appendix~\ref{sec:wick approx proof}.

\begin{lemma}[Approximation of Wick powers]
\label{thm:wick approx}
Let $2 \leq p < \infty$.
For every $\delta > 0$ and $s > 0$,
there exists a continuous map $f^{3,\delta} \colon H^{-s}(\rho) \to L^2(\rho)$
such that
\[
\lim_{\delta \to 0} \sup_{L}
\E \norm{f^{3,\delta}(Z_L + \psi_L) - \wick{(Z_L + \psi_L)^3}}_{\mathcal C^{-\varepsilon}(\rho)}^p = 0,
\]
where $Z_L$ is sampled from the Gaussian free field with period $1 \leq L \leq \infty$,
and $\psi_L \in L^{4p}(\Prob, B_{p,p}^\varepsilon(\rho))$.
In the complex case, $f^{3,\delta}$ is instead defined such that
\[
\lim_{\delta \to 0} \sup_L
\E \norm{f^{3,\delta}(Z_L + \psi_L) - \wick{(Z_L + \psi_L) \abs{Z_L + \psi_L}^2}}
    _{\mathcal C^{-\varepsilon}(\rho)}^p = 0.
\]
\end{lemma}

\subsection{Coupling of the \texorpdfstring{$\phi_4$}{phi\textasciicircum4} measure and the GFF}\label{sec:SQ}

We now turn to study the $\phi_2^4$ measure.
We can define it directly only in the periodic case;
we need to take a weak limit to get to infinite volume.

Let us first recall the definition and some basic results of weak convergence of probability measures.
These can be found in most probability textbooks;
see for example \cite[Sections~2 and~5]{billingsley_convergence_1999}.

\begin{theorem}[Weak convergence]
Let $\mathcal X$ be a metric space
and $C_b(\mathcal X; \R)$ the space of bounded continuous functions on it.
A sequence of Borel probability measures $(\mu_L)$ on $\mathcal X$
is said to \emph{converge weakly} to $\mu$ if
\[
\lim_{L \to \infty} \int f(\phi) \diff\mu_L(\phi)
= \int f(\phi) \diff\mu(\phi)
\quad\text{for all } f \in C_b(\mathcal X; \R).
\]
If $\mathcal X$ is a Polish space, then the weak limit is unique.
\end{theorem}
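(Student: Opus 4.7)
The content that requires proof is the final assertion: on a Polish space, a weak limit of probability measures is unique. The plan is to show that equality of integrals against all bounded continuous functions forces equality on a $\pi$-system generating the Borel $\sigma$-algebra.

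Suppose $(\mu_L)$ converges weakly to both $\mu$ and $\mu'$. By linearity and uniqueness of the limit of a real sequence, we get
\[
\int_{\mathcal X} f \,\mathrm d\mu = \int_{\mathcal X} f \,\mathrm d\mu'
\quad\text{for every } f \in C_b(\mathcal X; \R).
\]
The task is then to upgrade this to $\mu = \mu'$ as Borel measures. I would do this by approximating indicators of closed sets from above by bounded continuous functions, which is possible because $\mathcal X$ is in particular metrizable.

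Concretely, fix a compatible metric $d$ on $\mathcal X$ (available since $\mathcal X$ is Polish, hence metrizable). For an arbitrary closed set $F \subset \mathcal X$, set
\[
f_n(x) \coloneqq \max\bigl(0,\, 1 - n\, d(x, F)\bigr), \qquad n \in \N.
\]
Each $f_n$ lies in $C_b(\mathcal X; \R)$ with $0 \leq f_n \leq 1$, and $f_n(x) \downarrow \I_F(x)$ pointwise as $n \to \infty$ (using that $F$ is closed, so $d(x, F) > 0$ iff $x \notin F$). Dominated convergence applied to both $\mu$ and $\mu'$ then yields $\int f_n \mathd \mu \to \mu(F)$ and $\int f_n \mathd \mu' \to \mu'(F)$, and equality of the integrals for each $n$ forces $\mu(F) = \mu'(F)$.

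To finish, I would invoke Dynkin's $\pi$-$\lambda$ theorem: the closed sets form a $\pi$-system generating the Borel $\sigma$-algebra of $\mathcal X$, so two finite measures agreeing on them must coincide on the whole Borel $\sigma$-algebra. There is no genuine obstacle here; the only nontrivial input is the metrizability of $\mathcal X$, which is precisely why the Polish hypothesis is invoked. (Completeness and separability are not actually used, but they are the standing assumption in what follows, so stating it as ``Polish'' is natural.)
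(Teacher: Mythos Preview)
Your argument is correct and is the standard textbook proof of uniqueness of weak limits on metrizable spaces. The paper does not give its own proof of this statement; it simply cites a probability textbook (Klenke), so there is no in-paper argument to compare against.
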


\begin{definition}[Tightness]
A family $(\mu_L)_{L \in \N}$ of Borel probability measures on a metric space $\mathcal X$ is \emph{tight}
if for any $\varepsilon > 0$ there exists a compact set $K_\varepsilon$
such that
\[
\sup_{L \in \N} \mu_L(\mathcal X \setminus K_\varepsilon) < \varepsilon.
\]
\end{definition}

\begin{lemma}[Prokhorov's theorem; {\cite[Theorem~5.1]{billingsley_convergence_1999}}]
Suppose that the sequence $(\mu_L)$ defined above is tight.
Then there is a subsequence $(\mu_{L_k})$
that converges weakly to a Borel measure $\mu$ on $\mathcal X$.
\end{lemma}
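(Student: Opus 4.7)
The plan is the standard diagonal-extraction plus Riesz-representation proof of Prokhorov's theorem, specialised to the hypothesis of the lemma. The result is classical and is exactly what is cited from \cite{klenke_probability_2014}, so I would only outline the argument.

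First I would use tightness to pick, for each $n \in \N$, a compact set $K_n \subset \mathcal X$ with $\mu_L(\mathcal X \setminus K_n) < 1/n$ uniformly in $L$; replacing $K_n$ by $K_1 \cup \dotsb \cup K_n$ we may assume the sequence is increasing. Each $K_n$ is a compact metric space, so $C(K_n; \R)$ is separable; fix a countable family $\{f_{n,k}\}_k$ that is dense in its closed unit ball.

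Next, the scalar sequences $\int f_{n,k}\, \mathd \mu_L$ are bounded in $[{-1}, 1]$, so Bolzano--Weierstrass together with a diagonal extraction yields a single subsequence $(L_j)$ along which all of them converge. Density of $\{f_{n,k}\}$ combined with the tail bound $\mu_{L_j}(\mathcal X \setminus K_n) < 1/n$ upgrades this to convergence of $\int f\, \mathd \mu_{L_j}$ for every $f \in C_b(\mathcal X; \R)$. The limit $\Lambda(f) \coloneqq \lim_j \int f\, \mathd \mu_{L_j}$ is a positive linear functional with $\Lambda(1) = 1$.

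Finally, I would realise $\Lambda$ as integration against a Borel probability measure $\mu$. On each compact $K_n$ the restriction of $\Lambda$ is handled by the Riesz representation theorem, producing a Radon measure $\nu_n$ on $K_n$; these are consistent under restriction to $K_m$ for $m<n$, and so glue into a Borel measure $\mu$ on the $\sigma$-compact set $\bigcup_n K_n$, extended by zero to $\mathcal X$. Tightness ensures $\mu(\mathcal X) \geq 1 - 1/n$ for every $n$, so $\mu$ is a probability measure and weak convergence follows.

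The main obstacle is this last step: since $\mathcal X$ is only assumed to be a metric space, Riesz representation is not directly available on $C_b(\mathcal X)$, and without extra care the diagonal limit $\Lambda$ could correspond to a sub-probability measure with mass lost at infinity. Tightness is precisely the hypothesis that prevents such escape and makes the piecewise gluing of the $\nu_n$ yield a probability measure.
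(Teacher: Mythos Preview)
The paper does not give its own proof of this lemma; it simply cites \cite[Theorem~13.29]{klenke_probability_2014} and moves on. Your outline is the standard proof of Prokhorov's theorem and is correct in broad strokes, so there is nothing to compare against here.

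One small remark on your sketch: the step where you upgrade convergence on the countable dense families $\{f_{n,k}\}$ to all of $C_b(\mathcal X)$ needs a bit more than density in $C(K_n)$, since an $f \in C_b(\mathcal X)$ restricted to $K_n$ is approximated by functions on $K_n$ that need not extend to bounded continuous functions on $\mathcal X$. The usual fix is either to work with a countable dense subset of a suitable algebra of bounded uniformly continuous functions on $\mathcal X$, or (equivalently) to use the Portmanteau characterisation via closed sets together with the tightness bound. This is routine, but worth flagging since you identified the Riesz step as the delicate one while this earlier step also hides a small gap.
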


\begin{lemma}[Weak limits in product spaces; {\cite[Theorem~2.8]{billingsley_convergence_1999}}]
Assume that Borel probability measures
$(\mu_L)$ and $(\mu_L')$ converge weakly to $\mu$ and $\mu'$
on the Polish spaces $\mathcal X$ and $\mathcal X'$ respectively.
Then $(\mu_L \times \mu_L')$ converges weakly to $\mu \times \mu'$ on $\mathcal X \times \mathcal X'$.
\end{lemma}

\begin{lemma}[Skorokhod's theorem; {\cite[Theorem~6.7]{billingsley_convergence_1999}}]
\label{thm:skorokhod}
Suppose that $(\mu_L)$ converge weakly to $\mu$ supported on a Polish space.
Then there exist a common probability space $\tilde\Prob$
and random variables $X_L$, $X$ such that $\text{Law}(X_L) = \mu_L$, $\text{Law}(X) = \mu$,
and $X_L \to X$ almost surely.
\end{lemma}

We will consider a sequence of $\phi^4_{2,L}$ measures over increasingly large tori
and show that it is tight over a polynomially weighted Besov space.
This will give us a weak limiting measure $\phi_2^4$.

\begin{definition}[Periodic $\phi^4_2$]\label{def:phi4 periodic}
The $\phi^4_{2,L}$ measure over $\Lambda_L$ is given by
\[
\diff\mu_L (\phi) \coloneqq \mathcal Z_L\inv
\exp \left( -\int_{\Lambda_L} \wick{ \abs{\phi(x)}^4 } \dx \right) \diff \nu_L (\phi),
\]
where $\mathcal Z_L\inv$ is a normalization constant.
\end{definition}

The Wick power $\wick{\abs\phi^4}$ (meaning $\wick{\phi^4}$ in the real case)
makes sense as a distribution $\nu_L$-almost surely, and one can show that
the exponential belongs to $L^p (\nu_L)$
for any $p < \infty$ and $L < \infty$; see e.g.~\cite{barashkov_variational_2020}
and~\cite[Proposition~1.2]{oh_pedestrian_2018}.

For our purposes, it is easier to view $\mu_L$ as an invariant measure to a stochastic PDE.
This approach is known as \emph{stochastic quantization}.
As discussed in Section~\ref{sec:intro phi4},
this approach has been hugely successful in deducing properties of the measure.
The following result was one of the first breakthroughs in this approach:

\begin{theorem}[Parabolic stochastic quantization]\label{thm:SQ}
For any finite $L$, the measure $\mu_L$ is the unique invariant measure
of the stochastic quantization equation
\begin{equation}\label{eq:SQ}
    \partial_t W_L + (m^2 - \Delta) W_L + \wick{W_L \abs{W_L}^2} = \xi, \quad W_L \in C
    (\R_+, H^{- \varepsilon} (\Lambda_L)).
\end{equation}
Here $\xi$ is space-time white noise as in Definition~\ref{def:white noise}.
\end{theorem}
\begin{proof}
The real case was originally shown
by Da~Prato and Debussche \cite{da_prato_strong_2003};
see also \cite{mourrat_global_2017} for discussion and extension to infinite volume.
Uniqueness follows from \cite[Corollary~6.6]{tsatsoulis_spectral_2018},
although we will not use this fact below.

The complex case follows by a modification of the argument in~\cite{da_prato_strong_2003}.
In the fixpoint argument \cite[Proposition~4.4]{da_prato_strong_2003}
we need to replace the Wick-ordered third power with~\eqref{eq:wick abs perturbation}.
As the stochastic terms have the same Besov regularity as in the real case,
the proof still holds.
Similarly, the globalization argument that ends \cite[Section~4]{da_prato_strong_2003}
can be modified by replacing the polynomial $\wick{p(W_L)}$ with $\wick{W_L \abs{W_L}^2}$
and using Lemma~\ref{thm:wick gff moments}.
\end{proof}

\begin{remark}\label{rem:sq hyperbolic}
We use the better-known parabolic stochastic quantization argument,
but $\mu_L$ can also be viewed as an invariant measure to a stochastic nonlinear wave equation;
this is called \emph{hyperbolic} or \emph{canonical} stochastic quantization.
See \cite{gubinelli_global_2022} for the construction on the torus;
the argument is quite similar to Section~\ref{sec:periodic},
with slightly different linear propagators and the appearance of stochastic forcing.
It was the extension of this equation to $\R^2$ that was completed in \cite{oh_hyperbolic_2022}.

Our proof of Theorem~\ref{thm:sq tightness} requires the parabolic equation.
Corollary~\ref{cor:moments-strong} does not translate to the hyperbolic case at all,
since the wave operator has a smoothing effect of only one derivative
compared to two for the heat operator.
\end{remark}

The Da~Prato--Debussche argument is based on decomposing the solution into two parts.
Since the other part is more regular,
this shows that on short spatial scales the $\phi^4_2$ measure looks like the GFF.

\begin{theorem}[Tightness of $\phi^4_{2,L}$]\label{thm:sq tightness}
Samples from $\mu_L$ can be decomposed as the sum of Gaussian free field $Z_L$
and a random function $\psi_L \in H^{1}(\rho)$.
The laws of $Z_L$ and $\psi_L$ are then tight in $H^{-\varepsilon}(\rho^{1+\varepsilon})$
and $H^{1-\varepsilon}(\rho^{1+\varepsilon})$ respectively.
\end{theorem}
\begin{proof}
We will use {\eqref{eq:SQ}} to control the $\phi_{2, L}^4$ measure in
the limit $L \to \infty$.
We begin by decomposing the solution as $W_L = Z_L + \psi_L$, where $Z_L$ is the Gaussian part
that solves the stationary equation
\begin{equation}\label{eq:linear-heat}
\begin{aligncases}
\partial_t Z_L(t) + (m^2 - \Delta) Z_L(t) &= \xi(t),\\
\Law(Z_L(0)) &= \text{GFF}(\Lambda_L),
\end{aligncases}
\end{equation}
and $\psi_L$ solves
\begin{equation}\label{eq:SQ-change}
\begin{aligncases}
\partial_t \psi_L(t) + (m^2 - \Delta) \psi_L(t) &= -\wick{(Z+\psi_L)\abs{Z+\psi_L}^2}(t),\\
\psi_L(0) &= W_L(0) - Z_L(0).
\end{aligncases}
\end{equation}
We can take $W_L$ and $Z_L$ to be jointly stationary solutions to
{\eqref{eq:SQ}} and {\eqref{eq:linear-heat}}
so that $\Law(Z_L(t)) = \text{GFF}$;
see the beginning of Section~4.3 in \cite{gubinelli_pde_2021}.
In particular the Wick powers $\wick{Z_L^j}(t)$ are well-defined random distributions,
and the laws of $Z_L$ form a tight sequence by Theorem~\ref{thm:gff uniform bounds}.

It then suffices to show that
\begin{equation}
\sup_L \E \norm{\psi_L(t)}^2_{W^{1,2}(\rho)} < \infty,
\end{equation}
since by Theorem~\ref{thm:besov sobolev}
this implies that large balls in $H^1(\rho)$ norm have high probability,
and such balls are compact in $H^{1-\varepsilon}(\rho^{1+\varepsilon})$
by Theorem~\ref{thm:besov compactness}.

\bigskip\noindent
In the real case we multiply {\eqref{eq:SQ-change}} by $\rho^2 \psi_L(t)$ and integrate in $x$ to obtain
\begin{equation}\label{eq:sq tested}
\begin{split}
&\mathrel{\phantom{=}} \frac 1 2 \partial_t \norm{\rho \psi_L(t)}^2_{L^2} + m^2 \norm{\rho \psi_L(t)}^2_{L^2}
    + \norm{\rho \nabla \psi_L(t)}_{L^2}^2 + \norm{\rho^{1/2} \psi_L(t)}^4_{L^4} \\
    &= - G_t (Z_L, \psi_L),
\end{split}
\end{equation}
where the right-hand side is
\begin{equation}
\begin{split}
G_t (Z_L, \psi_L) &= 3 \int \rho^2 \wick{ Z_L(t)^3 }\; \psi_L(t) \dx
    + 3 \int \rho^2 \wick{ Z_L(t)^2 }\; \psi_L^2(t) \dx\\
&\qquad+ \int \rho^2 Z_L(t) \psi_L^3(t) \dx + \int \psi_L(t)
    (\nabla \rho^2 \cdot \nabla \psi_L(t)) \dx.
\end{split}
\end{equation}
In the complex case we instead compute
$\frac{\rho^2}{2} [
    \overline{\psi_L(t)} \cdot \eqref{eq:SQ-change}
    + \psi_L(t) \cdot \overline{\eqref{eq:SQ-change}}]$
to get equation~\eqref{eq:sq tested} with the
right-hand side replaced with $-G_t - \overline{G_t}$, where
\begin{equation}\label{eq:sq tested complex}
\begin{split}
G_t(Z_L, \psi_L)
&= \frac 1 2 \int \rho^2 \wick{Z_L(t) \abs{Z_L(t)}^2}\; \overline{\psi_L(t)} \dx\\
&\qquad + \int \rho^2 \wick{ \abs{Z_L(t)}^2 }\; \abs{\psi_L(t)}^2 \dx
    + \frac 1 2 \int \rho^2 Z_L(t)^2 \overline{\psi_L(t)^2} \dx\\
&\qquad + \int \rho^2 Z_L(t) \overline{\psi_L(t)} \abs{\psi_L(t)}^2 \dx
    + \frac 1 2 \int \rho^2 \overline{Z_L(t)} \psi_L(t) \abs{\psi_L(t)}^2 \dx\\
&\qquad + \frac 1 2 \int \overline{\psi_L(t)} (\nabla \rho^2 \cdot \nabla \psi_L(t)) \dx.
\end{split}
\end{equation}
In Appendix~\ref{sec:phi42 appendix} we show that in either case
\begin{equation}
\abs{G_t (Z_L, \psi_L)}
    \leq \delta \left(\norm{\psi_L(t)}^2_{W^{1,2}(\rho)}
   + \norm{\psi_L(t)}^4_{L^4(\rho^{1/2})}\right)
   + Q_t(Z_L),
\end{equation}
where
$\sup_L \E [| Q_t (Z_L) |^p] = \sup_L \E [| Q_0 (Z_L) |^p] < \infty$ for any $p < \infty$.
Meanwhile the left-hand side of~\eqref{eq:sq tested} is bounded from below by
\begin{equation}
\frac 1 2 \partial_t \norm{\psi_L(t)}^2_{L^2(\rho)} + (m^2 \wedge 1) \norm{\psi_L(t)}^2_{W^{1,2}(\rho)}
    + \norm{\psi_L(t)}^4_{L^4(\rho^{1/2})}.
\end{equation}
Combining these two, we get
\begin{equation}\label{eq:sq near final}
\frac 1 2 \partial_t \norm{\rho \psi_L(t)}^2_{L^2}
+ ((m^2 \wedge 1) - \delta) \left( \norm{\psi_L(t)}^2_{W^{1,2}(\rho)}
    + \norm{\rho^{1/2} \psi_L(t)}^4_{L^4} \right)
\leq Q_t (Z_L).
\end{equation}
The second term on the left is non-negative when $\delta$ is chosen small enough.
We may ignore the $L^4$ term.
If we integrate~\eqref{eq:sq near final} over an arbitrary interval~$[0,T]$
and take expectation, we get
\begin{equation}
\begin{split}
&\mathrel{\phantom{=}}
\frac 1 2 \E \left[ \norm{\rho \psi_L(T)}^2_{L^2} - \norm{\rho \psi_L(0)}^2_{L^2} \right]
+ ((m^2 \wedge 1) - \delta) \E \int_0^T \norm{\psi_L(t)}^2_{W^{1,2}(\rho)} \dt\\
&\leq T \E Q_0 (Z_L).
\end{split}
\end{equation}
Now the $L^2$ terms cancel by stationarity of $\psi_L$.
Similarly we may commute the expectation and integral around the $W^{1,2}$ term,
and be left with
\begin{equation}
\E \norm{\psi_L(0)}^2_{W^{1,2}(\rho)}
\leq \frac{\E Q_0 (Z_L)}{(m^2 \wedge 1) - \delta},
\end{equation}
which implies the claim.
\end{proof}

\subsection{Wick powers of \texorpdfstring{$\phi_2^4$}{phi42}}\label{sec:phi42 powers}

The bounds on the $\phi^4$ samples can be improved
to exponential tails, which then imply $L^p$ expectations for all $p$.
We defer the proof of this result to Appendix~\ref{sec:tails}.

\begin{theorem}[Exponential tails]\label{thm:phi42 exponential tails}
There exists $\delta > 0$ such that
\[
\sup_L \int \exp\left(\delta \norm{W_L}_{\mathcal C^{-\varepsilon}(\rho)}^2\right) \diff\mu_L(W_L)
\lesssim 1.
\]
The bound also holds in the limit $\mu$.
\end{theorem}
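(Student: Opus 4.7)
The plan is to use the Boué–Dupuis variational formula, following Barashkov and Gubinelli's approach for $\phi^4_3$. Since $\mu_L$ has density $\propto \exp(-V_L)$ with respect to $\nu_L$, where $V_L(\psi) = \lambda\int_{\Lambda_L}\wick{\psi^4}\dx$, the claim is equivalent to the ratio bound
\[
\frac{\int \exp\!\bigl(\delta\norm{\psi}_{\mathcal C^{-\varepsilon}(\rho)}^2 - V_L(\psi)\bigr)\mathd\nu_L(\psi)}{\int \exp\!\bigl(-V_L(\psi)\bigr)\mathd\nu_L(\psi)} \lesssim 1
\]
uniformly in $L$. Realizing $\psi \sim \nu_L$ as the value at infinity of the heat-flow integral of cylindrical white noise (analogous to the construction used for \eqref{eq:linear-heat}), Boué–Dupuis gives, for a measurable $F$ bounded below,
\[
-\log\int e^{-F(\psi)}\mathd\nu_L(\psi) = \inf_{v} \E\Bigl[F\bigl(\psi+I(v)\bigr) + \tfrac{1}{2}\int_0^\infty \norm{v_t}_{L^2}^2\dt\Bigr],
\]
where $v$ ranges over adapted drifts and $I(v)$ is the associated heat-smoothed integral. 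Applying this to both the numerator ($F = V_L - \delta\norm{\cdot}^2_{\mathcal C^{-\varepsilon}(\rho)}$) and the denominator ($F = V_L$) reduces the claim to showing that the two infima differ by at most a uniform constant.

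This in turn reduces to a pathwise estimate: bound $\delta\norm{\psi + I(v)}^2_{\mathcal C^{-\varepsilon}(\rho)}$ by a small fraction of $V_L(\psi + I(v)) + \tfrac12\norm{v}_{L^2_{t,x}}^2$ plus a remainder $R(\psi)$ with uniformly bounded $\nu_L$-expectation. I would split $\norm{\psi+I(v)}^2 \leq 2\norm{\psi}^2 + 2\norm{I(v)}^2$; the Gaussian $\psi$-piece is absorbed into $R(\psi)$ via Fernique applied to $\psi$ in $\mathcal C^{-\varepsilon}(\rho)$, whose variance is uniformly $L$-bounded by Theorem~\ref{thm:wick gff moments}. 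For the $I(v)$-piece, Theorem~\ref{thm:besov embedding} gives $\norm{I(v)}_{\mathcal C^{-\varepsilon}(\rho)} \lesssim \norm{I(v)}_{H^1(\rho)}$, and a standard heat-kernel estimate gives $\norm{I(v)}_{H^1(\rho)} \lesssim \norm{v}_{L^2_{t,x}}$, so this term is absorbed into $\tfrac{1}{4}\norm{v}^2_{L^2_{t,x}}$ for $\delta$ small. Finally, expanding $V_L(\psi + I(v))$ by the Wick binomial identity as $\sum_{k=0}^4 \binom{4}{k}\int\wick{\psi^k}\,I(v)^{4-k}\dx$, Theorem~\ref{thm:besov multiplication} with Young's inequality absorbs all lower-order $\wick{\psi^k}\,I(v)^{4-k}$ terms into a small fraction of $\lambda\norm{I(v)}^4_{L^4(\rho^{1/4})} + \tfrac{1}{4}\norm{v}^2_{L^2_{t,x}}$ plus a polynomial of Wick-power norms of $\psi$ with uniformly bounded expectation by Theorem~\ref{thm:wick gff moments}.

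The main obstacle is the constant bookkeeping on the infinite spatial domain: the polynomial weight $\rho$ must be chosen strong enough that every $\wick{\psi^k}$ norm has uniformly $L$-bounded expectation under $\nu_L$, yet weak enough for the Besov embedding and heat-kernel bounds to carry $L$-independent constants, and to let the quartic coercivity of $V_L$ survive the multiplicative losses. The admissible range of $\delta$ then emerges from the Young's inequality constants together with Fernique's parameter for $\psi$ in $\mathcal C^{-\varepsilon}(\rho)$. The infinite-volume bound for $\mu$ then transfers by weak convergence $\mu_L \to \mu$ (Section~\ref{sec:SQ}) and Fatou applied to the bounded continuous approximations $\min\!\bigl(K,\exp(\delta\norm{\psi}^2_{\mathcal C^{-\varepsilon}(\rho)})\bigr)$ on $\mathcal C^{-\varepsilon}(\rho)$, letting $K\to\infty$.
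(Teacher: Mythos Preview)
Your Boué--Dupuis strategy is a natural alternative to the paper's stochastic-quantization proof, and it works at \emph{fixed} $L$, but the uniformity in $L$ has a real gap. The potential $V_L(\psi+I(v)) = \lambda\int_{\Lambda_L}\wick{(\psi+I(v))^4}\dx$ is an \emph{unweighted} integral over $\Lambda_L$. When you expand it and try to estimate a cross term such as $\int_{\Lambda_L}\wick{\psi^k}\,I(v)^{4-k}\dx$ by Besov duality (Theorem~\ref{thm:besov duality}), the two weights must multiply to $1$: if you put a decaying weight $\rho$ on $\wick{\psi^k}$ (so that Lemma~\ref{thm:wick gff moments} gives an $L$-uniform bound), then $I(v)^{4-k}$ carries the growing weight $\rho^{-1}$, and that norm cannot be absorbed into either $\lambda\norm{I(v)}^4_{L^4(\rho^{1/4})}$ or $\tfrac12\norm{v}^2_{L^2}$ with $L$-independent constants. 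Conversely, if you keep the cross-term estimate unweighted, the Young remainder is a power of $\norm{\wick{\psi^k}}_{B^{-s}_{p,p}(\Lambda_L)}$, whose expectation grows like $|\Lambda_L|\sim L^2$. So your sentence ``a polynomial of Wick-power norms of $\psi$ with uniformly bounded expectation'' does not follow from the cited lemma; the norms that actually arise are either unweighted (extensive in $L$) or carry the wrong weight. The same extensivity issue afflicts the term $\int_{\Lambda_L}\wick{\psi^4}\dx$: its expectation vanishes, but in a pathwise bound of the type you need its fluctuations are of order $L$.

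The paper avoids this entirely by a different mechanism (Appendix~\ref{sec:tails}): it tilts $\mu_L$ by $\exp(h(\norm{\psi}))$, observes via the inequality $\int e^S\,\mathrm d\upsilon\le\exp(\int S\,\mathrm d\upsilon_S)$ that it suffices to bound $\int h(\norm{\psi})\,\mathrm d\bar\mu_L$, and then runs the stochastic quantization energy estimate for the tilted invariant measure $\bar\mu_L$. The crucial point is that the SQ equation is tested against $\rho\bar\phi$, so the weight is built into \emph{every} term of the energy identity from the start; the cross terms are then controlled by weighted norms of $\wick{Z^k}$ (uniformly bounded) against weighted norms of $\bar\phi$ (absorbed by the coercive terms), exactly as in Appendix~\ref{sec:phi42 appendix}. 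If you want to salvage the variational route uniformly in $L$, you would need some device that similarly localizes the unweighted potential---for instance a weighted relative-entropy formulation or a spatially local coercivity argument---rather than the global Young split you sketch.
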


Since the nonlinearity in \eqref{eq:nlw} is cubic,
we will need the first three Wick powers of the $\phi^4$ field.
We construct and estimate the Wick powers of $\phi_{2,L}^4$ uniformly in $L$,
and thus in the $L \to \infty$ limit.
In the proof of this lemma, we rescale $\varepsilon$ so that
$\varepsilon$ of Theorem~\ref{thm:sq tightness} is now denoted $\varepsilon/12$.

\begin{theorem}[Wick powers of $\phi^4$]\label{thm:phi42 wick moments}
Let $W_L = Z_L + \psi_L$ be sampled from $\phi_{2, L}^4$ as in Theorem~\ref{thm:sq tightness}. Then
$\wick{ W_L^j }$ is a well-defined random distribution for $j \leq 3$, and for
any $\varepsilon > 0$ and $p < \infty$ we have
\[
\sup_L \E \norm{\wick{ W_L^j }}^p_{\mathcal C^{-\varepsilon}(\rho^2)} < \infty.
\]
Furthermore, if $W$ is sampled from the full-space $\phi_2^4$ measure, then
\[
\E \norm{\wick{ W^j }}^p_{\mathcal C^{-\varepsilon}(\rho^2)} < \infty.
\]
\end{theorem}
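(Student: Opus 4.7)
The plan is to exploit the decomposition $W_L = Z_L + \phi_L$ from Section~\ref{sec:SQ}, together with the Wick-powers-of-perturbations lemma, to write
\[
\wick{W_L^j} = \sum_{i=0}^{j} \binom{j}{i}\; \wick{Z_L^i}\; \phi_L^{j-i},
\]
so that each Wick power decomposes into a finite sum of products of a pure Gaussian Wick factor and a polynomial in the smoother remainder $\phi_L$. I would then bound each summand in $\mathcal C^{-\varepsilon}(\rho^2)$ using the Besov multiplicative estimate and an embedding.

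For the Gaussian factors Lemma~\ref{thm:wick gff moments} already supplies uniform $L^p$-moment control of $\wick{Z_L^i}$ in $\mathcal C^{-\varepsilon}(\rho)$ for every $p < \infty$, so the first real task is to upgrade the $L^2$-moment bound on $\phi_L$ in $H^1(\rho^{1/2+\varepsilon})$ from Section~\ref{sec:SQ} to higher $L^p$ moments in a space of strictly positive Besov regularity. I would re-run the energy estimate by testing \eqref{eq:SQ-change} against $\rho^{\alpha_p}\,\phi_L^{2p-1}$ in place of $\rho\phi$: the cubic nonlinearity still yields a coercive $L^{2p+2}$ term on the left, while the right-hand side splits, along the lines of Appendix~\ref{sec:phi42 appendix}, into a $Q_p(Z)$ with finite moments plus a fraction of the coercive terms. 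Stationarity of $\phi_L$ kills the time derivative in expectation, yielding $\sup_L \E\,\|\phi_L\|_{H^1(\rho^{\alpha_p})}^p < \infty$ for every $p$; Theorem~\ref{thm:besov embedding} then places $\phi_L$ in some $B^{\delta}_{q,\infty}(\rho^{\beta})$ with $\delta>0$ and $q$ as large as desired.

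With these ingredients I would bound a generic summand $\wick{Z_L^i}\;\phi_L^{j-i}$ by iterating Theorem~\ref{thm:besov multiplication}: first combining the factors of $\phi_L$ into $\phi_L^{j-i} \in B^{\delta'}_{q/(j-i),\infty}(\rho^{(j-i)\beta})$ for a slightly smaller but still positive $\delta'$, and then multiplying by $\wick{Z_L^i} \in \mathcal C^{-\varepsilon}(\rho)$ using $\delta' > \varepsilon$. One final Besov embedding trades a cosmetic amount of regularity for the $L^\infty$ exponent and returns the product to $\mathcal C^{-2\varepsilon}(\rho^2)$; since $\varepsilon$ was arbitrary this is harmless. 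Hölder's inequality in probability combines the moment bounds on the two factors, and only finitely many summands appear because $j \leq 3$. The full-space statement will then follow from the periodic bound along a weakly convergent subsequence $\mu_{L_k} \rightharpoonup \mu$ supplied by Prokhorov, with Lemma~\ref{thm:wick approx} identifying the weak limit of $\wick{W_{L_k}^j}$ as $\wick{W^j}$ and Fatou preserving the moment bound. The principal obstacle will be the moment upgrade in the second paragraph: the energy estimate of Section~\ref{sec:SQ} is tailored to the test function $\rho\phi$, and one has to check that the commutator $[\nabla,\rho^{\alpha_p}\phi_L^{2p-2}]$-type terms and the mixed $\wick{Z_L^i}\,\phi_L^{k}$ terms produced by the higher-order test function are still absorbable by Young-type inequalities into the coercive $L^{2p+2}$ norm of $\phi_L$.
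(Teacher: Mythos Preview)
Your decomposition via the binomial formula and the multiplicative strategy for each summand match the paper exactly, as does the use of Lemma~\ref{thm:wick gff moments} for the Gaussian factors. The genuine difference is in how you obtain high $L^p$ moments of $\phi_L$ in a space of \emph{positive} regularity. You propose to rerun the energy estimate with the higher-order test function $\rho^{\alpha_p}\phi_L^{2p-1}$ to get $\sup_L \E\,\|\phi_L\|_{H^1(\rho^{\alpha_p})}^p < \infty$ directly. The paper instead avoids this computation entirely: it uses the exponential tails of $W_L$ in $\mathcal C^{-\varepsilon}(\rho)$ (Theorem~\ref{thm:phi42 exponential tails}) together with $\phi_L = W_L - Z_L$ to obtain all $L^p$ moments of $\phi_L$ in the \emph{negative}-regularity space $\mathcal C^{-\varepsilon/12}(\rho)$, and then interpolates between this and the existing $L^2$ bound in $H^1$. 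The interpolation parameter is chosen so that the $H^1$ factor appears with exponent strictly less than $2$, so the $L^2$ moment already suffices there. This is shorter and sidesteps precisely the commutator and mixed-term absorption issues you flag as the principal obstacle; your route is viable (it is carried out in parts of the stochastic-quantization literature) but noticeably heavier. For the full-space statement the paper simply repeats the argument using the limiting decomposition $W = Z + \phi$ from Theorem~\ref{thm:phi42 weak limit}, whereas your Fatou-plus-Lemma~\ref{thm:wick approx} route would also work.
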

\begin{proof}
We only do the proof in the most difficult case $j = 3$. The other cases are
analogous. Recall that we have by \eqref{eq:linear-heat} and \eqref{eq:SQ-change} the decomposition
\begin{equation}
\wick{W_L^3} = \sum_{j = 0}^3 \binom 3 j \wick{Z_L^j}\; \psi_L^{3 - j}.
\end{equation}
Now for $q = 4/\varepsilon$ we can use Theorem~\ref{thm:besov multiplication} to estimate
\begin{equation}\label{eq:wick powers holder}
\begin{split}
\norm{\wick{Z_L^j}\; \psi_L^{3 - j}}_{\mathcal C^{- \varepsilon} (\rho^2)}
& \lesssim \norm{\wick{Z_L^j}\; \psi_L^{3 - j} }_{B_{q, q}^{- \varepsilon / 13} (\rho^2)}\\
& \lesssim \norm{\wick{Z_L^j} }_{\mathcal C^{- \varepsilon / 13} (\rho)}
    \norm{\psi_L^{3 - j}}_{B_{q, q}^{\varepsilon / 12} (\rho)}\\
& \lesssim \norm{\wick{Z_L^j}}_{\mathcal C^{- \varepsilon / 13} (\rho)}^{2}
    + \norm{\psi_L}^{2(3 - j)}_{B_{3 q, 3 q}^{\varepsilon / 12} (\rho^{1 / 3})}.
\end{split}
\end{equation}
The Gaussian part is bounded by Lemma~\ref{thm:wick gff moments}.
Theorem~\ref{thm:phi42 exponential tails} implies that
$\E \norm{W_L}_{\mathcal C^{- \varepsilon/12} (\rho)}^p < \infty$,
so we can estimate the perturbation as
\begin{equation}
\sup_L \E \norm{\psi_L}_{\mathcal C^{- \varepsilon/12}(\rho)}^p
\lesssim \sup_L\, (\E \norm{W_L}_{\mathcal C^{- \varepsilon/12} (\rho)}^p
    + \E \norm{Z_L}_{\mathcal C^{- \varepsilon/12} (\rho)}^p) < \infty.
\end{equation}
This estimate provides integrability,
whereas the estimate $\E \norm{\psi_L}^2_{H^{1-\varepsilon/12}(\rho^{1/6})} < \infty$
from Section~\ref{sec:SQ} provides differentiability.
We can interpolate between these two with Theorem~\ref{thm:besov interpolation}:
\begin{equation}
\begin{split}
\norm{\psi_L}_{B_{3 q, 3 q}^{\varepsilon / 12} (\rho^{1 / 3})}
&\lesssim \norm{\psi_L}^{(1-\theta)}_{\mathcal C^{- \varepsilon/12} (\rho^{1 / 6})}
    \norm{\psi_L}^{\theta}_{H^{1-\varepsilon/12}(\rho^{1 / 6})}\\
&\lesssim \norm{\psi_L}^{2(1-\theta)}_{\mathcal C^{- \varepsilon/12} (\rho^{1 / 6})}
    + \norm{\psi_L}^{2\theta}_{H^{1-\varepsilon/12}(\rho^{1 / 6})},
\end{split}
\end{equation}
where we choose $\theta = \varepsilon/6$.
As we substitute this back into \eqref{eq:wick powers holder},
we find that the final expectation is bounded.
\end{proof}

From Theorem \ref{thm:phi42 wick moments} we can bootstrap a stronger statement for the coupling.
The perturbation $\psi$ is two derivatives more regular than $Z$,
instead of just one derivative as showed earlier.

\begin{corollary}[Strong bound for regular part]\label{cor:moments-strong}
We can find random variables $Z_L, \psi_L$ such that $\Law(Z_L) = \nu_L$,
$\Law(Z_L + \psi_L) = \mu_L$, and
\[ \sup_L \E \norm{ \psi_L }^p_{H^{2 - \varepsilon} (\rho)} \lesssim 1. \]
\end{corollary}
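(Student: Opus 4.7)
My plan is to exploit the parabolic smoothing in the stochastic quantization equation \eqref{eq:SQ} via a time-delayed Duhamel representation. The coupling is already fixed by Section~\ref{sec:SQ}: take $u_L = Z_L + \phi_L$ to be the jointly stationary solution of \eqref{eq:SQ} defined for all $t \in \R$, so that $\phi_L(\cdot)$ is a stationary process and each $\phi_L(t)$ has the same distribution. The task is therefore only to produce the uniform moment bound.

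Applying Duhamel from $t = -1$ to $t = 0$ to the equation $\partial_t \phi_L + (m^2 - \Delta) \phi_L = -\wick{(Z_L+\phi_L)^3}$ gives
\begin{equation*}
\phi_L(0) = e^{-(m^2-\Delta)} \phi_L(-1) - \int_{-1}^0 e^{s(m^2-\Delta)} \wick{u_L(s)^3} \, ds.
\end{equation*}
I would estimate the two pieces separately in $H^{2-\varepsilon}(\rho)$. For the boundary term, $e^{-(m^2-\Delta)}$ is smoothing of arbitrary order on polynomially weighted Besov spaces (its symbol decays exponentially), hence
\begin{equation*}
\|e^{-(m^2-\Delta)} \phi_L(-1)\|_{H^{2-\varepsilon}(\rho)} \lesssim \|\phi_L(-1)\|_{\mathcal{C}^{-\varepsilon}(\rho)}.
\end{equation*}
By stationarity this has the same distribution as $\phi_L(0)$, whose $L^p$ moments are uniform in $L$ by combining Theorem~\ref{thm:phi42 exponential tails} (applied to $W_L = Z_L + \phi_L$) with Lemma~\ref{thm:wick gff moments} (applied to $Z_L$) and a triangle inequality.

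For the Duhamel integral I would invoke a weighted parabolic Schauder estimate of the form $\|e^{-\tau(m^2-\Delta)} f\|_{H^{2-\varepsilon}(\rho)} \lesssim \tau^{-1+\varepsilon/2} \|f\|_{\mathcal{C}^{-\varepsilon}(\rho)}$ for $\tau \in (0,1]$. This reduces the $H^{2-\varepsilon}(\rho)$ norm of the integral to $\int_{-1}^0 |s|^{-1+\varepsilon/2} \|\wick{u_L(s)^3}\|_{\mathcal{C}^{-\varepsilon}(\rho)} \, ds$. Taking the $L^p(\Omega)$ norm, Minkowski's inequality together with the stationarity of $s \mapsto \wick{u_L(s)^3}$ bounds this by
\begin{equation*}
\Bigl( \int_{-1}^0 |s|^{-1+\varepsilon/2} \, ds \Bigr) \cdot \bigl( \E \,\|\wick{u_L^3}\|_{\mathcal{C}^{-\varepsilon}(\rho)}^p \bigr)^{1/p},
\end{equation*}
which is finite uniformly in $L$ by Theorem~\ref{thm:phi42 wick moments}. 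Summing the two contributions gives the claim.

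The main technical obstacle is the bookkeeping of polynomial weights together with the target-space integrability exponent. The heat semigroup smooths cleanly within a fixed $\mathcal{C}^s(\rho)$ scale, but passing from $\mathcal{C}^{-\varepsilon}(\rho)$ to the $L^2$-based space $H^{2-\varepsilon}(\rho)$ with the full two derivatives would cost $\tau^{-1}$, which is not time-integrable. I would resolve this by giving up an $\varepsilon/2$ of smoothing and allowing the weight to be weakened from $\rho$ to $\rho^{1+\delta}$ (at the cost of choosing $\alpha$ slightly larger and using $\|\cdot\|_{L^2(\rho^{1+\delta})} \lesssim \|\cdot\|_{L^\infty(\rho)}$ on frequency-localized pieces), then relabeling the weight exponent and $\varepsilon$ in the final bound as permitted by the flexibility declared in Section~\ref{sec:besov}.
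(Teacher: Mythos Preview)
Your argument is essentially the paper's own proof: apply Duhamel to the remainder equation \eqref{eq:SQ-change}, use parabolic smoothing of the heat semigroup on each piece, and feed in the uniform moment bounds of Theorem~\ref{thm:phi42 wick moments} for the Wick cube together with stationarity. The only cosmetic differences are that the paper integrates forward from $0$ to a free time $t$ (rather than backward from $-1$ to $0$) and takes $H^{-\varepsilon/2}(\rho)$ as the source space instead of $\mathcal C^{-\varepsilon}(\rho)$, which lets the Schauder step stay within the $L^2$ scale and avoids the $\tau^{-1}$ worry you flag; your weight-weakening fix is a legitimate alternative for the same bookkeeping.
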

\begin{proof}
For notational simplicity we consider the real case;
the complex case follows by modifying the Duhamel term below.
Recall that from the stochastic quantization equation~\eqref{eq:SQ} we have
\begin{equation}
\psi_L (t)
= \int^t_0 e^{- (t - s) \Delta} \wick{(Z_L (s) + \psi_L (s))^3} \ds + e^{- t \Delta} \psi_L (0).
\end{equation}
So provided $p$ is large enough that $| t - s |^{-(1 - \varepsilon / 2) p/(p-1)}$
has integrable singularity,
we can use the smoothing effect of the heat operator (\cite[Proposition~5]{mourrat_global_2017})
together with Hölder's inequality
to estimate
\begin{equation}
\begin{split}
& \E \norm{ \psi_L (t) }^p_{H^{2 - \varepsilon} (\rho)}\\
\lesssim\; &\E \norm{\int^t_0 e^{-(t-s)\Delta} \wick{(Z_L(s) + \psi_L (s))^3} \ds }^p_{H^{2-\varepsilon}(\rho)}
    + \E \norm{e^{-t\Delta} \psi_L(0)}^p_{H^{2 - \varepsilon}(\rho)}\\
\lesssim\; &\E \left[ \int^t_0
    \frac{\norm{\wick{(Z_L (s) + \psi_L (s))^3}}_{H^{-\varepsilon/2}(\rho)}}{\abs{t-s}^{1-\varepsilon/2}} \ds
    \right]^p + \frac{\E \norm{\psi_L(0)}^p_{H^{-\varepsilon/2}(\rho)}}{t^{1 - \varepsilon / 2}}\\
\leq\; &C_{t,p} \int^t_0 \E \norm{\wick{(Z_L (s) + \psi_L (s))^3}}^p_{H^{-\varepsilon / 2}(\rho)} \ds
    + \frac{\E \norm{\psi_L (0)}^p_{H^{- \varepsilon / 2} (\rho)}}{t^{1 - \varepsilon / 2}}.
\end{split}
\end{equation}
Since $Z_L$ and $\psi_L$ are both stationary, we may choose $t$ as we like.
The integrand is then uniformly bounded by Theorem~\ref{thm:phi42 wick moments}.
\end{proof}

In total we have obtained that
$\sup_L \E \norm{\psi_L}^p_{H^{2-\varepsilon}(\rho)} < \infty$.
By the same compactness argument as above, $\Law (Z_L, \psi_L)$
is tight on $H^{- 2\varepsilon} (\rho^2) \times H^{2 - 2\varepsilon} (\rho^2)$.
In particular $\mu_L = \Law (Z_L + \psi_L)$ is
tight on $H^{- 2\varepsilon} (\rho^2)$ and has a weakly converging
subsequence.
We have thus proved the following:

\begin{theorem}[$\phi^4_2$ as a weak limit]\label{thm:phi42 weak limit}
  Let $\rho$ be a sufficiently integrable polynomial weight.
  The measure $\mu_L$ can be represented as
  \[ \mu_L = \Law (Z_L + \psi_L) \]
  where $Z_L$ is a GFF on $\Lambda_L$, and $\psi_L$ satisfies
  $\sup_L \E \norm{  \psi_L }^p_{H^{2-\varepsilon}(\rho)} < \infty$. Identifying
  $Z_L + \psi_L$ with its periodic extension on $\R^2$ we have that
  $(\mu_L)$ is tight on $H^{- 2\varepsilon} (\rho^2)$ and any limiting point $\mu$ satisfies
  \[ \mu = \Law (Z + \psi) \]
  where $Z$ is a Gaussian free field on $\R^2$ and $\E \norm{\psi}^p_{H^{2-2\varepsilon} (\rho^2)} < \infty$.
\end{theorem}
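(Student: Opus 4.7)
The plan is to assemble the pieces already in place: the coupling $(Z_L,\phi_L)$ coming from the stochastic quantization setup of Section~\ref{sec:SQ}, the uniform moment bounds of Theorem on GFF regularity and Corollary~\ref{cor:moments-strong}, and the compact embedding of Theorem~\ref{thm:besov compactness}. The argument is essentially a Prokhorov extraction on the joint law, followed by identification of the Gaussian marginal.

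First I would verify joint tightness of $\Law(Z_L,\phi_L)$ on the product space $\mathcal C^{-\varepsilon}(\rho^{1+\varepsilon}) \times H^{2-\varepsilon}(\rho^{1+\varepsilon})$. From the uniform bounds
\[
\sup_L \E\,\norm{Z_L}^p_{\mathcal C^{-\varepsilon/2}(\rho)} + \sup_L \E\,\norm{\phi_L}^p_{H^{2-\varepsilon/2}(\rho)} < \infty,
\]
Markov's inequality shows that for any $\delta>0$ there is an $R$ with
\[
\inf_L \Prob\bigl(\norm{Z_L}_{\mathcal C^{-\varepsilon/2}(\rho)} \leq R,\ \norm{\phi_L}_{H^{2-\varepsilon/2}(\rho)} \leq R\bigr) \geq 1-\delta.
\]
By Theorem~\ref{thm:besov compactness}, the ball of radius $R$ in $\mathcal C^{-\varepsilon/2}(\rho)$ (resp.\ $H^{2-\varepsilon/2}(\rho)$) is relatively compact in $\mathcal C^{-\varepsilon}(\rho^{1+\varepsilon})$ (resp.\ $H^{2-\varepsilon}(\rho^{1+\varepsilon})$), and hence the product of these balls is a compact subset of the product space supporting probability at least $1-\delta$. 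This yields tightness, and Prokhorov supplies a subsequence $(Z_{L_k},\phi_{L_k})$ converging jointly in distribution to some $(Z,\phi)$.

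Since the map $(z,f)\mapsto z+f$ is continuous from the product space into $H^{-\varepsilon}(\rho^{1+\varepsilon})$, the continuous mapping theorem gives $\mu_{L_k} = \Law(Z_{L_k}+\phi_{L_k}) \to \Law(Z+\phi) =: \mu$ weakly on $H^{-\varepsilon}(\rho^{1+\varepsilon})$. The uniform bound $\E\,\norm{\phi}^p_{H^{2-\varepsilon}(\rho)} < \infty$ then follows from lower semicontinuity: testing against a continuous truncation $\min(\norm{\phi}^p_{H^{2-\varepsilon}(\rho)},M)$ and sending $M\to\infty$ via monotone convergence produces the bound from the corresponding uniform bound on $\phi_{L_k}$.

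The main technical point, and the step I expect to be the real obstacle, is identifying the marginal of $Z$ as the full-space Gaussian free field $\nu$ on $\R^2$. For each fixed $\psi \in \mathcal S(\R^2)$ the random variable $\dual{Z_L,\psi}$ is centered Gaussian with variance $\dual{\psi,(m^2-\Delta)_{\Lambda_L}\inv\psi}_{L^2}$ after periodic extension; one must verify that this converges to $\dual{\psi,(m^2-\Delta)\inv\psi}_{L^2(\R^2)}$ as $L\to\infty$. This is a Riemann-sum type computation comparable to Lemma~\ref{lemma:wick-ordering-rem}: write the periodic covariance as a sum over $L\inv\Z^2$ of $\widehat\psi$ values weighted by $(m^2+|n|^2)\inv$, and compare with the corresponding integral over $\R^2$, using the Schwartz decay of $\widehat\psi$. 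The characteristic functional of $\dual{Z_L,\psi}$ therefore converges to that of a centered Gaussian with the full-space covariance, and since $\dual{\cdot,\psi}$ is a continuous linear functional on $\mathcal C^{-\varepsilon}(\rho^{1+\varepsilon})$ for suitable $\psi$, the limiting $Z$ has the prescribed finite-dimensional distributions and hence $\Law(Z)=\nu$. Combined with the bounds of the previous paragraph, this completes the proof.
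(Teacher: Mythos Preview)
Your proposal is correct and follows essentially the same approach as the paper: joint tightness of $\Law(Z_L,\phi_L)$ from the uniform moment bounds and the compact Besov embedding, Prokhorov extraction, identification of the $Z$-marginal as the full-space GFF via convergence of covariances, and the moment bound on $\phi$ via lower semicontinuity of the norm under weak convergence. The paper's own proof is terser---it takes the tightness as already established in the preceding discussion and simply cites covariance convergence and lower semicontinuity---so your write-up amounts to a faithful expansion of the same argument.
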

\begin{proof}
Tightness was discussed above.
We know that the limit of $\Law (Z_L)$ as $L \to \infty$ is a
Gaussian free field on $\R^2$; this follows for instance from the
convergence of the covariances. It remains to show that
\begin{equation}
\E \norm{ \psi }^p_{H^{2-2\varepsilon} (\rho^2)} < \infty,
\end{equation}
but since $\norm{ \psi }^2_{H^{2-\varepsilon} (\rho)}$ is lower semicontinuous on
$H^{2 - 2\varepsilon} (\rho^2)$ we have by weak convergence
\begin{equation}
\E [\norm{ \psi }^p_{H^{2-2\varepsilon} (\rho^2)}]
\leq \liminf_{L \to \infty} \E [\norm{ \psi_L }^p_{H^{2-\varepsilon} (\rho)}]
<\infty. \qedhere
\end{equation}
\end{proof}

\begin{remark}
We were careful to state the preceding theorem for ``any limiting point $\mu$''.
When the coupling parameter $\lambda$ in~\eqref{eq:phi4 measure} is large enough,
there exist subsequences of $(\phi^4_{2,L})$ that converge to different weak limits.
This is one of the main complications in our study.
\end{remark}

\section{Invariance of periodic NLW}\label{sec:periodic}

Let us now move on to solving the nonlinear wave equation.
We fix a periodic domain $\Lambda_L = {[{-L},{L}]}^2$ and consider
\begin{equation}\label{eq:nlw bounded}
\begin{aligncases}
    \partial_{tt} u(x, t) + (m^2 - \Delta) u(x,t) &= -\wick{u(x,t)^3},\\
    u(x, 0) &= u_0(x),\\
    \partial_t u(x,0) &= u_0'(x)
\end{aligncases}
\end{equation}
on $\Lambda_L \times \R_+$.
The initial data will be sampled from $\vec{\mu}_L$,
meaning that $u_0$ is from the $\phi^4$ measure (Definition~\ref{def:phi4 periodic})
and the initial time derivative $u_0'$ from a white noise measure (Definition~\ref{def:white noise}).

\begin{remark}
The Wick ordering will always be taken with respect to the infinite-volume covariance
(Definition~\ref{def:wick full space}),
even if we start from periodic initial data.
\end{remark}

\begin{remark}
We now relabel $\varepsilon$ and $\rho$ such that the space
$H^{-2\varepsilon}(\rho^2)$ at the end of Section~\ref{sec:phi42 powers}
is now denoted by $H^{-\varepsilon}(\rho)$.
\end{remark}

By solving the equation in Fourier space, we can write the mild solution as
\begin{equation}\label{eq:mild solution}
u(t) = \Cc_t u_0 + \Cs_t u_0' - \int_0^t [\Cs_{t-s} \wick{u(s)^3}] \ds,
\end{equation}
where we use the cosine and sine operators
\begin{equation}
\Cc_t = \cos((m^2 - \Delta)^{1/2} t),
\quad
\Cs_t = \frac{\sin((m^2 - \Delta)^{1/2} t)}{(m^2 - \Delta)^{1/2}}.
\end{equation}
These are defined as Fourier multiplier operators.
We see that
$\Cc_t$ preserves the $H^s(\Lambda_L)$ regularity of its argument whereas $\Cs_t$ increases it by one derivative.

We again split the solution into nonlinear and linear parts $u = v + w$.
Here $w(x, t) = \Cc_t u_0(x) + \Cs_t u_0'(x)$ solves the linear wave (Klein--Gordon) equation
\begin{equation}\label{eq:periodic w}
\partial_{tt} w(x, t) + (m^2 - \Delta) w(x,t) = 0.
\end{equation}
This leaves $v$ to solve the coupled equation
\begin{equation}
\partial_{tt} v(x, t) + (m^2 - \Delta) v(x,t) = -\, \wick{(v+w)^3}
\end{equation}
with zero initial data.
We will see that $v$ has one degree higher regularity than $w$,
and its growth is controlled by $w$.

The almost sure wellposedness of \eqref{eq:nlw bounded} with a more general nonlinearity
was proved by Oh and Thomann \cite[Theorem~1.5]{oh_invariant_2020}.
It was also stated without proof by Bourgain
in a lecture note two decades earlier \cite[Theorem~111]{bourgain_nonlinear_1999}.
The argument presented below replaces the more specific Fourier restriction norm
by a general Besov norm, and includes the details on convergence of solutions.

\subsection{Linear part}

It is a basic property of the wave equation that all wave packets travel at a fixed speed.
The propagators are then also bounded in weighted spaces
since the weight does not change too much within a ball.
The finite speed of propagation applies to the nonlinear equation \eqref{eq:mild solution} as well,
as we show in Lemma~\ref{thm:nonlinear speed of sound}.

\begin{lemma}[Finite speed of propagation, linear part]\label{thm:speed of sound}
If the initial data $(u_0, u_0')$ and $(\tilde u_0, \tilde u_0')$ coincide on $B(0, R)$, $R > 0$,
then the corresponding linear wave equation solutions $w(t)$ and $\tilde w(t)$
coincide on $B(0, R - \abs t)$ up to times $\abs t < R$.
Moreover, this result holds also in the infinite volume $\R^2$.
\end{lemma}
\begin{proof}
\cite[Section~12.1.2]{evans_partial_2010}.
\end{proof}

\begin{lemma}[Boundedness of linear propagators]\label{thm:linear operator bounds}
For $s \in \R$, $\varepsilon > 0$,
and $f \in H^{s}(\rho)$ we have
\[
\begin{gathered}
\norm{\Cc_t f}_{H^s (\rho)} \lesssim (1 + \abs t)^{1+\alpha/2} \norm{f}_{H^{s+\varepsilon}(\rho)},\\
\norm{\Cs_t f}_{H^s (\rho)} \lesssim (1 + \abs t)^{1+\alpha/2} \norm{f}_{H^{s-1+\varepsilon}(\rho)},
\end{gathered}
\]
where $\alpha$ is the parameter of $\rho$.
Fixing $T$, we get uniform bounds in $\abs t \leq T$.
\end{lemma}
\begin{proof}
Let us consider $\Cc_t$.
For $\Cs_t$ the proof is identical,
except that we gain a derivative.
By going to the fractional Sobolev space with Theorem~\ref{thm:besov sobolev}
(which costs $\varepsilon$ derivatives),
we can assume $s = 0$ since $\Cc_t$ commutes with $\inorm\nabla^s$.
By Lemma~\ref{thm:speed of sound} and the decomposition $w(t) = \Cc_t u_0 + \Cs_t u_0'$,
the finite speed of propagation also applies to $\Cc_t$ and $\Cs_t$ individually.

Let $P(n)$ be the decomposition of $\R^2$ into unit rectangles
as in Lemma~\ref{lemma:wick-ordering-rem},
and $\chi_n$ the sharp indicator function of $P(n)$.
Given $t \in \R$, let $\tilde\chi_n$ be the sharp indicator of $P(n) + B(0, \abs t)$.
Then we have
\begin{equation}
\norm{\chi_n \rho \Cc_t f}_{L^2}^2
\leq \left[ \sup_{x \in P(n)} \rho(x)^2 \right] \norm{\chi_n \Cc_t f}_{L^2}^2
\leq \left[ \sup_{x \in P(n)} \rho(x)^2 \right] \norm{\Cc_t \tilde\chi_n f}_{L^2}^2.
\end{equation}
We see that $\Cc_t$ is bounded on $L^2$ with flat weight,
since it is a Fourier multiplier with bounded symbol.
Then we use the moderateness property $\rho(x) \leq \rho(x-y)\inv \rho(y)$
together with the estimate $\rho (x - y)\inv \lesssim (1 + \abs t)^{\alpha/2}$
that follows from $\tilde \chi_n$ vanishing outside $\abs{x-y} \lesssim 1 + \abs t$:
\begin{equation}
\sup_{x \in P(n)} \rho(x)^2 \int_{\R^2} \tilde\chi_n(y)^2 f(y)^2 \dy 
\leq (1 + \abs t)^\alpha \int_{\R^2} \tilde\chi_n(y)^2 \rho(y)^2 f(y)^2 \dy.
\end{equation}
Finally, it suffices to observe that any point of $\R^2$
supports order $(1 + \abs t)^2$ instances of $\tilde\chi_n$.
As we sum over $n$, we get
\begin{equation}
\begin{split}
\sum_n \norm{\chi_n \rho \Cc_t f}_{L^2}^2
&\lesssim \sum_n (1 + \abs t)^\alpha \norm{\tilde\chi_n \rho f}_{L^2}^2\\
&\lesssim (1 + \abs t)^{2+\alpha} \norm{\rho f}_{L^2}^2.
\qedhere
\end{split}
\end{equation}
\end{proof}

In probabilistic terms, the linear part looks almost like the coupled $\phi^4$ measure:
there is an invariant Gaussian free field part and a more regular term.
This stationarity property simplifies several proofs.

\begin{lemma}[Law of linear part]\label{thm:linear gff plus regular}
Let us sample $(u_0, u_0')$ from $\vec{\mu}$
and decompose $u_0 = Z_{L} + \psi_{L}$ as in Theorem~\ref{thm:phi42 weak limit}.
Then the linear part \eqref{eq:periodic w} can be written as
\[
w(\cdot, t) = \bigg[ \Cc_t Z_L + \Cs_t u_0' \bigg] + \Cc_t \psi_{L}.
\]
The law of the bracketed term is GFF for all $t \in \R$,
whereas $\Cc_t \psi \in H^{2-\varepsilon}(\rho^{1/2})$ almost surely.
\end{lemma}
\begin{proof}
The latter part follows from the boundedness of $\Cc_t$ on $H^s(\rho)$ shown above.
To prove the first part, we need to compute the covariance.
For any test functions $f$, $g$ we have
\begin{equation}
\begin{split}
&\E \bigg[ \dual{f, \Cc_t Z_L + \Cs_t u_0'} \dual{g, \Cc_t Z_L + \Cs_t u_0'} \bigg]\\
=\; &\E \bigg[ \dual{f, \Cc_t Z_L} \dual{g, \Cc_t Z_L} \bigg]
    + \E \bigg[ \dual{f, \Cs_t u_0'} \dual{g, \Cs_t u_0'} \bigg]
\end{split}
\end{equation}
by independence of $Z_L$ and $u_0'$.
Because $\Cc_t$ is a self-adjoint operator, the first term becomes
\begin{equation}
\E \bigg[ \dual{f, \Cc_t Z_L} \dual{g, \Cc_t Z_L} \bigg]
= \dual{\Cc_t f, \frac{\Cc_t g}{m^2-\Delta}}
= \dual{f, \frac{\cos((m^2-\Delta)^{1/2})^2}{m^2-\Delta} g}.
\end{equation}
For the second term we have white noise covariance instead:
\begin{equation}
\E \bigg[ \dual{f, \Cs_t u_0'} \dual{g, \Cs_t u_0'} \bigg]
= \dual{\Cs_t f, \Cs_t g}
= \dual{f, \frac{\sin((m^2-\Delta)^{1/2})^2}{m^2-\Delta} g}.
\end{equation}
Now the trigonometric identity $\sin^2 + \cos^2 = 1$ implies
\begin{equation}
\E \bigg[ \dual{f, \Cc_t Z_L + \Cs_t u_0'} \dual{g, \Cc_t Z_L + \Cs_t u_0'} \bigg]
= \dual{f, \frac{1}{m^2-\Delta} g}.
\qedhere
\end{equation}
\end{proof}

Not only the linear part but also its Wick powers are continuous in time.
This was shown by Oh, Okamoto, and Tzvetkov~\cite{oh_uniqueness_2024}
in the periodic case.
The result also yields a very good moment bound on $w_L$ and its Wick powers.

\begin{lemma}[Moment bounds for linear part]\label{thm:linear moment bounds}
There exists a version of $w_{L,N}$ such that each $\wick{w_{L,N}^j}$, $j \leq 3$,
belongs almost surely to
$C([0,T];\, \mathcal C^{-\varepsilon}(\rho))$ and satisfies the moment bound
\[
\sup_{L > 1, N \in \N \cup \{\infty\}}
\E \norm{\wick{w_{L,N}^j}}_{C([0,T];\, \mathcal C^{-\varepsilon}(\rho))}^p
\lesssim_p 1
\]
for any $1 \leq p < \infty$.
Moreover, for any finite $L$ we have
\[
\lim_{N \to \infty}
\E \norm{\wick{w_{L,N}^j} - \wick{w_L^j}}_{C([0,T];\, \mathcal C^{-\varepsilon}(\rho))}^p
= 0.
\]
\end{lemma}
\begin{proof}
We defer the proof of the first part to Appendix~\ref{sec:continuity}.
\cite[Proposition~1.1]{oh_uniqueness_2024} gives both results for a space equipped with flat weight.
The second claim then follows from it and Theorem~\ref{thm:besov periodic into polynomial}.
\end{proof}

We can now show that powers of the linear parts converge as $L \to \infty$.
We will use this result as we pass to the full space in Section~\ref{sec:globalization}.
This could be done by modifying the argument of Appendix~\ref{sec:continuity},
but an easier $L^p$-in-time bound is sufficient and follows from the stationarity.

\begin{lemma}[Convergence of linear parts]\label{thm:linear convergence to full space}
Let $1 \leq p < \infty$,
and let $w$ solve the linear wave equation
started from infinite-volume $(\phi^4_2, \text{white noise})$ initial data.
There is again the moment bound
\[
\E \norm{\wick{w^j}}_{C([0,T];\, \mathcal C^{-\varepsilon}(\rho))}^p
\lesssim_{p} 1.
\]
As $L \to \infty$ along the subsequence from Theorem~\ref{thm:phi42 weak limit},
$\wick{w^{i}_{L}}$ converges in probability to $\wick{w^{i}}$
in $L^{p}([0,T],\mathcal{C}^{-\varepsilon}(\rho^3))$.
\end{lemma}
\begin{proof}
Let us decompose the initial value $u_{0,L} = w_{\stat,L} + \psi_L$
as in Lemma~\ref{thm:linear moment bounds}.
The convergence of $\Cc_{t}\psi_{L}$ to $\Cc_{t}\psi$ in $H^{2-\varepsilon}(\rho)$
follows from continuity of $\Cc_{t}$ in $H^{2-\varepsilon}(\rho)$.
We need to show that $\wick{w^{i}_{\stat,L}} \to \wick{w_\stat^{i}}$ in $L^{p}([0,T],\mathcal{C}^{-\varepsilon}(\rho^3))$.
Then continuity of Besov product
from $\mathcal C^{-\varepsilon} \times H^{2-\varepsilon}$ to $\mathcal C^{-\varepsilon}$
implies convergence of $\wick{(w_{\stat,L} + \psi_L)^3}$.

We have that $w_{\stat,L} \to w_\stat$ in $C([0,T];\,H^{-\varepsilon}(\rho))$ by continuity of the linear operators.
Now with $f^{\delta}$ as in Lemma~\ref{thm:wick approx} we have
\begin{equation}
\begin{split}
\wick{w^{i}_{\stat,L}}-\wick{w^{i}_\stat}
=\;& [\wick{w^{i}_{\stat,L}}-f^{\delta}(w_{\stat,L})]\\
&\quad + [f^{\delta}(w_{\stat,L})-f^{\delta}(w_\stat)] + [f^{\delta}(w_\stat)-\wick{w^{i}_\stat}].
\end{split}
\end{equation}
The middle term goes to $0$ as $L \to \infty$ since $f^{\delta}$ is continuous
from $H^{-\varepsilon}(\rho)$ to $\mathcal{C}^{-\varepsilon}(\rho^{3})$,
and for the first and last term we have by stationarity
\begin{equation}
\begin{split}
&\mathrel{\phantom{=}} \E \bigg[ \int_{0}^{T} \norm{f^{\delta}(w_{\stat,L})-\wick{w^{i}_{\stat,L}}}
  _{\mathcal{C}^{-\varepsilon}(\rho^3)}^p \ds \bigg]\\
&= T \E \norm{f^{\delta}(w_{\stat,L}(0))-\wick{w^{i}_{\stat,L}(0)}}_{\mathcal{C}^{-\varepsilon}(\rho^3)}.
\end{split}
\end{equation}
By Lemma~\ref{thm:wick approx} this is a $\delta$-dependent constant independently of $L$,
so we may first pass $L \to \infty$ and then $\delta \to 0$.
\end{proof}

\subsection{Fixed-point iteration}

We now use the standard fixed-point argument to solve
\begin{equation}\label{eq:fixpoint nlw equation}
v(x, t) = -\int_0^t [\Cs_{t-s} \wick{(v+w)^3}](x) \ds.
\end{equation}
up to a short time.
We do the iteration in $C([0,\tau];\, H^{1-\varepsilon}(\Lambda_L))$.
The spatial weight must be flat because we need it to be the same
on both sides of the product estimates.

This argument is completely deterministic when the linear part $w$ from~\eqref{eq:periodic w} is fixed.
We control the growth of $v$ by assuming bounds on $w$;
these bounds will be verified by stochastic estimates in Section~\ref{sec:periodic stoch}.

\begin{lemma}[Boundedness]\label{thm:fixpoint boundedness}
Let $M = \max_{j=1,2,3} \norm{\wick{w^j}}_{L^\infty([0,1];\, \mathcal C^{-\varepsilon}(\rho))}$
and $\tau \leq 1$.
The operator
\[
(\mathcal F v)(x,t) \coloneqq -\int_0^t [\Cs_{t-s} \wick{(v+w)^3}](x) \ds
\]
maps a ball of radius $R$ into a ball of radius $C_L \tau M (1+R^3)$
in the space $C([0,\tau];\, H^{1-\varepsilon}(\Lambda_L))$.
\end{lemma}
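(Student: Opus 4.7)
The plan rests on three ingredients: the one-derivative smoothing of $\Cs_{t-s}$ from Lemma~\ref{thm:linear operator bounds}, the Besov product estimate of Theorem~\ref{thm:besov multiplication} applied to an algebraic expansion of the Wick cube, and Hölder's inequality in time.

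First, I would use the identity
\[
\wick{(v+w)^3} = v^3 + 3v^2 w + 3 v\, \wick{w^2} + \wick{w^3},
\]
which holds because $v$ is deterministic with respect to the Gaussian structure underlying the Wick ordering (the same mechanism as in the Wick-powers-of-perturbations lemma of Section~\ref{sec:stochastic}). Combined with the $H^{-\varepsilon}\to H^{1-\varepsilon}$ mapping of $\Cs_{t-s}$ this gives
\[
\norm{\mathcal F v(t)}_{H^{1-\varepsilon}(\Lambda_L)}
\lesssim \lambda \int_0^t \norm{\wick{(v+w)^3}(s)}_{H^{-\varepsilon}(\Lambda_L)} \ds.
\]

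Second, I would estimate each of the four terms pointwise in $s$ via Theorem~\ref{thm:besov multiplication}. Choosing $\varepsilon$ small enough that $1-\varepsilon > \varepsilon$, the multiplicative inequality yields $\norm{v \cdot \wick{w^j}}_{H^{-\varepsilon}} \lesssim \norm{v}_{H^{1-\varepsilon}} \norm{\wick{w^j}}_{\mathcal C^{-\varepsilon}}$; iterated application together with the 2D embedding $H^{1-\varepsilon}(\Lambda_L) \hookrightarrow L^q(\Lambda_L)$ for $q$ large controls $\norm{v^k}_{H^{1-\varepsilon}} \lesssim \norm{v}_{H^{1-\varepsilon}}^k$ for $k=2,3$. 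The weight conversion Lemma~\ref{thm:besov periodic into polynomial} then transfers the controls on $\norm{\wick{w^j}}_{\mathcal C^{-\varepsilon}(\rho)}$ to the periodic norm at the cost of an $L$-dependent constant absorbed into $C_\Lambda$. The pointwise-in-time bound reads
\[
\norm{\wick{(v+w)^3}(s)}_{H^{-\varepsilon}(\Lambda_L)}
\lesssim C_\Lambda \left( R^3 + (1 + R + R^2)\, \max_j \norm{\wick{w^j}(s)}_{\mathcal C^{-\varepsilon}(\rho)} \right).
\]

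Third, I would integrate in time. The deterministic contribution is at worst $\tau R^3$, while each stochastic term is controlled via Hölder's inequality against the $L^4_t$ norm defining $M$, giving at worst $\tau^{3/4} M R^k$ for $k = 0,1,2$. Restricting to $\tau \leq 1$ all these powers are dominated by $\tau^{1/2}$, and combining via the triangle inequality yields the stated bound $C_\Lambda \lambda \tau^{1/2} M(1+R^3)$.

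The main obstacle I anticipate is checking that the Besov multiplicative indices close up on the bounded domain at the subcritical regularity $1-\varepsilon$: one needs $H^{1-\varepsilon}(\Lambda_L)$ to behave as a quasi-algebra (through a Sobolev embedding into some $L^q$), and one needs the weight conversion so that the $\rho$-weighted Wick bounds are usable in the periodic setting. Both are possible precisely because $d=2$ and $\varepsilon$ is small; once these are arranged, the remainder of the proof is straightforward integration.
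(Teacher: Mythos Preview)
Your overall strategy matches the paper's: one-derivative smoothing of $\Cs_{t-s}$, algebraic expansion of the Wick cube, Besov product estimates, and time integration. The organisation of the time step differs slightly (you Hölder against $L^4_t$ at the end, obtaining $\tau^{3/4}$ and $\tau$ which you then dominate by $\tau^{1/2}$; the paper applies Cauchy--Schwarz first to extract $\tau^{1/2}$ and works throughout in $L^2_\tau H^{-\varepsilon}$), but these give the same result.

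There is, however, a genuine gap in your product step. You assert that the Sobolev embedding $H^{1-\varepsilon}(\Lambda_L)\hookrightarrow L^q$ for large $q$ yields $\norm{v^k}_{H^{1-\varepsilon}}\lesssim\norm{v}_{H^{1-\varepsilon}}^k$. This is false in two dimensions: $H^s$ is an algebra only for $s>d/2=1$, and since the embedding never reaches $L^\infty$ no iteration of Theorem~\ref{thm:besov multiplication} will put $v^2$ or $v^3$ back into $H^{1-\varepsilon}$. The point you flag as ``the main obstacle'' is in fact not resolvable in the form you propose.

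The fix, which is what the paper does, is to notice that you never need $v^k$ at regularity $1-\varepsilon$. Since $\wick{w^j}\in\mathcal C^{-\varepsilon}$, Theorem~\ref{thm:besov multiplication} only requires the smooth factor at regularity strictly above $\varepsilon$, so placing $v^{3-j}\,\wick{w^j}$ in $H^{-\varepsilon}$ needs only $\norm{v^{3-j}}_{H^{2\varepsilon}}$. The paper then estimates $\norm{v^2}_{H^{2\varepsilon}}\lesssim\norm{v}_{B^{3\varepsilon}_{4,4}}^2$ and similarly $\norm{v}_{B^{3\varepsilon}_{6,6}}^3$ for the cubic term, and closes via the embedding $H^{1-\varepsilon}\hookrightarrow B^{3\varepsilon}_{6,6}$, which holds for $\varepsilon<1/12$. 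With this correction your argument goes through and coincides with the paper's.
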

\begin{proof}
We can commute the Fourier multiplier and apply Jensen's inequality in
\begin{equation}
\begin{split}
\norm{\mathcal F v}_{L^\infty_\tau H^{1-\varepsilon}(\Lambda_L)}
&= \sup_{0 \leq t \leq \tau} \left[
    \int_{\Lambda_L} \abs{ \inorm\nabla^{1-\varepsilon}
        \int_0^t \Cs_{t-s} \wick{(v+w)^3} \ds }^2 \dx
\right]^{1/2}\\
&\leq \tau^{1/2} \sup_{0 \leq t \leq \tau} \left[
    \int_{\Lambda_L} \int_0^t \abs{ \inorm\nabla^{1-\varepsilon}
        \Cs_{t-s} \wick{(v+w)^3} }^2 \ds \dx
\right]^{1/2}\\
&\lesssim \tau^{1/2} \left[
    \int_0^\tau \int_{\Lambda_L} \abs{ \inorm\nabla^{-\varepsilon}
        \wick{(v+w)^3} }^2 \ds \dx
\right]^{1/2}\\
&= \tau \norm{\wick{(v+w)^3}}_{L^\infty_\tau H^{-\varepsilon}(\Lambda_L)}.
\end{split}
\end{equation}
In the second-to-last step we used the increase in Besov regularity from $\Cs_t$;
on periodic space there is no $\varepsilon$-loss of differentiability of Lemma~\ref{thm:linear operator bounds}.

We can now expand the binomial power by triangle inequality
and estimate each term separately.
First, $\norm{\wick{w^3}}_{L^\infty H^{-\varepsilon}(\Lambda_L)} \lesssim L^c M$,
where $c$ depends on $\rho$ through Lemma~\ref{thm:besov periodic into polynomial}.
The second term is estimated as
\begin{equation}
\norm{\wick{w^2} v}_{L^\infty_\tau H^{-\varepsilon}(\Lambda_L)}
\lesssim \norm{\wick{w^2}}_{L^\infty_\tau \mathcal C^{-\varepsilon}(\Lambda_L)}
    \norm{v}_{L^\infty_\tau H^{2\varepsilon}(\Lambda_L)},
\end{equation}
and for the third one we use Theorem~\ref{thm:besov multiplication} twice:
\begin{equation}
\norm{w v^2}_{L^\infty_\tau H^{-\varepsilon}(\Lambda_L)}
\lesssim \norm{w}_{L^\infty_\tau \mathcal C^{-\varepsilon}(\Lambda_L)}
    \norm{v^2}_{L^\infty_\tau H^{2\varepsilon}(\Lambda_L)}
\lesssim L^c M \norm{v}_{L^\infty_\tau B^{3\varepsilon}_{4,4}(\Lambda_L)}^2.
\end{equation}
We also perform the a similar multiplicative estimate for the $v^3$ term.
Thus we have estimated
\begin{equation}
\begin{split}
&\norm{\wick{(v+w)^3}}_{L^\infty_\tau H^{-\varepsilon}(\Lambda_L)}\\
\lesssim\; &L^c M \left[
    1 + \norm{v}_{L^\infty_\tau \mathcal H^{2\varepsilon}(\Lambda_L)}
    + \norm{v}_{L^\infty_\tau B^{3\varepsilon}_{4,4}(\Lambda_L)}^2
    + \norm{v}_{L^\infty_\tau B^{3\varepsilon}_{6,6}(\Lambda_L)}^3
\right]\!,
\end{split}
\end{equation}
which yields the required bound after
embedding $H^{1-\varepsilon}$ into $B^{3\varepsilon}_{6,6}$
by Theorem~\ref{thm:besov embedding}.
With the estimates above, this is possible for $\varepsilon < 1/12$.

Continuity in time follows from
\begin{equation}
\begin{split}
&\mathrel{\phantom{=}} \mathcal F v(t+s) - \mathcal F v(t)\\
&= -\int_0^t [\Cs_{t+s-r} - \Cs_{t-r}] \wick{(v+w)^3} \diff r
    - \int_t^{t+s} \Cs_{t+s-r} \wick{(v+w)^3} \diff r,
\end{split}
\end{equation}
since $\Cs_{t+s-r} \to \Cs_{t-r}$ pointwise in $H^{-1-\varepsilon}(\Lambda_L)$ as $s \to 0$.
\end{proof}

\begin{lemma}[Contraction]\label{thm:fixpoint contraction}
In the setting of Lemma~\ref{thm:fixpoint boundedness},
we also have
\[
\norm{\mathcal Fv - \mathcal F\tilde v}_{C([0,\tau];\, H^{1-\varepsilon})}
\lesssim C_L \tau M (1+R^2)
    \norm{v - \tilde v}_{C([0,\tau];\, H^{1-\varepsilon})}.
\]
\end{lemma}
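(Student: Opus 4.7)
The plan is to mimic the boundedness argument with one extra bookkeeping step, namely factoring out $v-\tilde v$ from the difference of cubic Wick polynomials. Since Wick ordering only concerns the powers of $w$, expanding the cube and subtracting gives the algebraic identity
\[
\wick{(v+w)^3} - \wick{(\tilde v+w)^3}
= (v-\tilde v)\bigl[\,v^2 + v\tilde v + \tilde v^2 + 3(v+\tilde v)\,w + 3\,\wick{w^2}\,\bigr].
\]
All Wick counterterms between the two cubes cancel because they depend only on $w$, so no renormalization survives.

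Next I would run the same Jensen-plus-smoothing estimate as in Lemma~\ref{thm:fixpoint boundedness} on the Duhamel integral to reduce matters to the spatial estimate
\[
\norm{\mathcal Fv-\mathcal F\tilde v}_{L^\infty_\tau H^{1-\varepsilon}(\Lambda_L)}
\lesssim \lambda\,\tau^{1/2}\,\bigl\|(v-\tilde v)\,Q\bigr\|_{L^2_\tau H^{-\varepsilon}(\Lambda_L)},
\]
where $Q$ denotes the bracketed quadratic expression above. The $\tau^{1/2}$ prefactor and the gain of one derivative in $\Cs_{t-s}$ come verbatim from the previous lemma.

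Then I would apply Theorem~\ref{thm:besov multiplication} term by term, always peeling off the factor $v-\tilde v$ in a norm dominated by $L^\infty_\tau H^{1-\varepsilon}$ (via Theorem~\ref{thm:besov embedding} into the relevant $B^{s}_{p,p}$), and pairing it with the appropriate companion factor. Concretely, the three resulting types of term are:
\begin{itemize}
\item polynomial in $v,\tilde v$: handled exactly as the $v^3$ contribution in Lemma~\ref{thm:fixpoint boundedness}, giving a factor $\lesssim R^2\,\norm{v-\tilde v}_{L^\infty_\tau H^{1-\varepsilon}}$;
\item mixed term with $w$: handled as the $v^2 w$ contribution, yielding $\lesssim L^c M\,R\,\norm{v-\tilde v}_{L^\infty_\tau H^{1-\varepsilon}}$;
\item the term with $\wick{w^2}$: handled as the $v\,\wick{w^2}$ contribution, yielding $\lesssim L^c M\,\norm{v-\tilde v}_{L^\infty_\tau H^{1-\varepsilon}}$.
\end{itemize}
Summing these gives the prefactor $(1+R^2)$ and the stated inequality.

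There is no real analytic obstacle here beyond what was already surmounted in Lemma~\ref{thm:fixpoint boundedness}; the only subtlety is keeping track of the Wick counterterms, which is resolved by the observation that they depend only on $w$ and therefore cancel in the difference. Provided $\varepsilon<1/12$ so that every embedding in the previous proof still holds, the contraction estimate follows with the same constants.
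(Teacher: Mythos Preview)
Your proposal is correct and follows essentially the same approach as the paper: both reduce via the Jensen-plus-smoothing step to $\lambda\tau^{1/2}\|\wick{(v+w)^3}-\wick{(\tilde v+w)^3}\|_{L^2_\tau H^{-\varepsilon}}$, then factor $v-\tilde v$ out of each of the three surviving terms $\wick{w^2}(v-\tilde v)$, $w(v^2-\tilde v^2)$, $v^3-\tilde v^3$ and apply the same multiplicative inequalities as in Lemma~\ref{thm:fixpoint boundedness}. The only cosmetic difference is that you write the factorization as a single algebraic identity up front, whereas the paper treats the three terms separately.
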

\begin{proof}
We can begin as in Lemma~\ref{thm:fixpoint boundedness} to get the upper bound
\begin{equation}
\tau
\norm{\wick{(v+w)^3} - \wick{(\tilde v + w)^3}}_{L^\infty_\tau H^{-\varepsilon}(\Lambda_L)}.
\end{equation}
When we again expand the binomials, we get three terms to estimate.
First,
\begin{equation}
\begin{split}
\norm{\wick{w^2} (v - \tilde v)}_{L^\infty_\tau H^{-\varepsilon}(\Lambda_L)}
&\lesssim \norm{\wick{w^2}}_{L^\infty_\tau \mathcal C^{-\varepsilon}(\Lambda_L)}
    \norm{v - \tilde v}_{L^\infty_\tau H^{2\varepsilon}(\Lambda_L)}\\
&\lesssim M \norm{v - \tilde v}_{L^\infty_\tau H^{1-\varepsilon}(\Lambda_L)}.
\end{split}
\end{equation}
In the second term we additionally need to expand
\begin{equation}
\begin{split}
\norm{v^2 - \tilde v^2}_{L^\infty_\tau H^{2\varepsilon}(\Lambda_L)}
&\lesssim \norm{v - \tilde v}_{L^\infty_\tau B^{3\varepsilon}_{4,4}(\Lambda_L)}
    \norm{v + \tilde v}_{L^\infty_\tau B^{3\varepsilon}_{4,4}(\Lambda_L)}\\
&\lesssim 2R \norm{v - \tilde v}_{L^\infty_\tau H^{1-\varepsilon}(\Lambda_L)}.
\end{split}
\end{equation}
In the final term, the corresponding expansion is
\begin{equation}
\begin{split}
&\norm{v^3 - \tilde v^3}_{L^\infty_\tau H^{2\varepsilon}}\\
=\; &\norm{(v - \tilde v)(v^2 + v\tilde v + \tilde v^2)}_{L^\infty_\tau H^{2\varepsilon}}\\
\lesssim\; &\norm{v - \tilde v}_{L^8_\tau B^{3\varepsilon}_{4,4}}
    \left(
        \norm{v}_{L^\infty_\tau B^{4\varepsilon}_{8,8}}^2
        + \norm{v}_{L^\infty_\tau B^{4\varepsilon}_{8,8}}
          \norm{\tilde v}_{L^\infty_\tau B^{4\varepsilon}_{8,8}}
        + \norm{\tilde v}_{L^\infty_\tau B^{4\varepsilon}_{8,8}}^2
    \right)\\
\lesssim\; &\norm{v - \tilde v}_{L^\infty_\tau H^{1-\varepsilon}(\Lambda_L)} R^2.
\end{split}
\end{equation}
All together, we get the claimed inequality for $\varepsilon$ small.
\end{proof}

\begin{theorem}\label{thm:fixpoint finished}
Assume that the moment bound in Lemma~\ref{thm:fixpoint boundedness} holds
with $M \geq 1$.
Then the nonlinear equation \eqref{eq:fixpoint nlw equation} has a unique solution
\[
v \in C([0,\tau];\, H^{1-\varepsilon}(\Lambda_L))
\]
of norm at most $M$,
where the time $\tau$ depends on both $M$ and the period $L$.
\end{theorem}
\begin{proof}
It only remains to choose $R$ and $\tau$ such that
\begin{equation}
\begin{cases}
C_L \tau M (1 + R^3) \leq R,\\
C_L \tau M (1 + R^2) \leq \frac 1 2.
\end{cases}
\end{equation}
We can select $R = M$ and $\tau = (4 C_L R^3)^{-1}$.
\end{proof}

\subsection{Globalization in time}\label{sec:periodic stoch}

The analysis of previous sections also applies to the truncated equation
\begin{equation}\label{eq:nlw truncated}
\begin{aligncases}
    \partial_{tt} u(x, t) + (m^2 - \Delta) u(x,t) &= -P_{N} \wick{P_N u^3},\\
    u(x, 0) &= P_N u_0(x),\\
    \partial_t u(x,0) &= P_N u_0'(x)
\end{aligncases}
\end{equation}
posed on $\Lambda_L \times \R_+$,
where $P_N$ truncates the Fourier series to terms with frequency at most $2^N$ in absolute value.%
\footnote{Recall that we define the Besov space with a full-space Fourier transform;
the Fourier transform is a linear combination of Dirac deltas in this case.}
The estimates are only changed by a constant factor since the projection operators
$P_N$ are bounded uniformly in $H^s(\Lambda_L)$ norm,
and the linear operators $\Cc_t$ and $\Cs_t$ do not change the Fourier support.

The reason to pass to \eqref{eq:nlw truncated} is that the state space
now consists of finitely many Fourier modes.
Because the equation is Hamiltonian,
a theorem of Liouville automatically implies invariance of the corresponding Gibbs measure.

\begin{definition}[Truncated Gibbs measure]\label{def:periodic truncated measure}
The measure $\vec{\mu}_{L,N}$ is supported on the subset of $\dataspace^{-\varepsilon}(\rho)$
that contains $2L$-periodic functions Fourier-truncated to ${[{-2^N},{2^N}]}^2$,
and is given by the density
\[
f(u, u')
= \exp\left( - \int_{\Lambda_L} \frac{\wick{P_{N} u(x)^4}}{4} \dx \right)
\]
with respect to the periodic, truncated (GFF, white noise) product measure.
\end{definition}

\begin{theorem}[Local-in-time invariance]\label{thm:liouville invariance}
Let us recall that we denote by $\dataspace^{-\varepsilon}(\rho)$
the space of pointwise-in-time solution pairs $H^{-\varepsilon}(\rho) \times H^{-1-\varepsilon}(\rho)$.
Then
\begin{itemize}
\item The flow $\flow_{L,N,t} \colon \dataspace^{-\varepsilon} \to \dataspace^{-\varepsilon}$
    of \eqref{eq:nlw truncated} is well-defined for $0 \leq t \leq \tau$,
    where $\tau$ depends on the data.
\item For any measurable set of initial data $A \subset \dataspace^{-\varepsilon}$
    such that the solution exists almost surely up to~$\tau$,
    we have $\vec{\mu}_{L,N}(P_N A) = \vec{\mu}_{L,N}(\flow_{L,N,t} P_N A)$ for all $0 \leq t \leq \tau$.
\end{itemize}
\end{theorem}
\begin{proof}
Existence of solutions was already discussed.
Equation~\eqref{eq:nlw truncated} can be written as the Hamiltonian system
\begin{equation}
\frac{\diff u}{\diff t} = \frac{\partial H(u, u')}{\partial u'}, \quad
\frac{\diff u'}{\diff t} = -\frac{\partial H(u, u')}{\partial u},
\end{equation}
with energy
\begin{equation}
H(u, u') = \int_{\Lambda_L} \frac{\wick{u(x)^4}}{4}
    + \frac{\abs{\nabla u(x)}^2 + m^2 u(x)^2 + u'(x)^2}{2} \dx.
\end{equation}
Hence the measure can be written as
\begin{equation}
\diff\vec{\mu}_{L,N}(u, u') = \exp(- H(u, u'))
    \prod_{k \in {[{-2^N},{2^N}]}^2} \diff\hat u(k) \diff\hat u'(k).
\end{equation}
The energy is constant under a Hamiltonian flow \cite[Section~15]{arnold_mathematical_1980},
whereas the Lebesgue measure of $A$ is preserved by Liouville's theorem \cite[Section~16]{arnold_mathematical_1980}.
\end{proof}

The globalization argument is motivated by \eqref{eq:nls}.
For $L^2$ solutions of \eqref{eq:nls},
the local time $\tau$ only depends on the $L^2$ norm of initial data,
which is conserved by the flow.
Then one can restart the flow from $u(\tau)$ and get a solution up to time $2\tau$,
and by induction to any time.

Such a conservation law is not expected for generic $H^s$ norms,
which motivated the probabilistic argument of Bourgain~\cite{bourgain_periodic_1994}.
By invariance of measure, random solutions at time $\tau$
are distributed identically to the initial data,
and hence we can control the solution on a high-probability set.

\begin{definition}[Bounded-moment set]\label{def:bounded-moment set}
Fix $T \geq 1$.
We define
\[
B_M \coloneqq \left\{ \norm{\vec{u}_0}_{\dataspace^{-\varepsilon}(\rho)} \leq M
    \text{ such that }
    \norm{\wick{w^j}}_{C([0,T];\, \mathcal C^{-\varepsilon}(\rho))} \leq M
    \text{ for } j=1,2,3 \right\},
\]
where $w$ is the $L$-periodic linear part \eqref{eq:periodic w} with data
$\vec{u}_0 \coloneqq (u_0, u_0')$.
\end{definition}

\begin{remark}
We fix the final time $T$ to an arbitrary positive value
in order to simplify the exposition.
We will extend the solution to all times $t \in [0, \infty)$
with some post-processing in Lemma~\ref{thm:global post-process}.
\end{remark}

Since the definition of $B_M$ matches the moment bound in Lemma~\ref{thm:fixpoint boundedness},
it follows that $\flow_{L,N,t} B_M$ is well-defined up to time $\tau(M)$
for all $N \in \N$.
We can then restart the flow, and overlap such local solution intervals:

\begin{lemma}[Growth bound]\label{thm:local growth bound}
Let us define
\[
\mathcal B_{M,L,N} \coloneqq
    B_M \cap \flow_{L,N,\tau/2}^{-1} B_M
    \cap \cdots \cap \flow_{L,N,\tau/2}^{-2m} B_M,
\]
where $m = T / \tau$ ($\tau$ dependent on $M$).
For all $(u_0, u_0') \in \mathcal B_{M,L,N}$,
there exists a unique solution $u_N \in C([0,T];\, H^{-\varepsilon}(\Lambda_L))$
to \eqref{eq:nlw truncated}, and
\begin{equation}\label{eq:local growth bound}
\norm{\wick{u_N^j}}_{C([0,T];\, H^{-\varepsilon}(\Lambda_L))} \lesssim (T M)^j
\end{equation}
for $j = 1,2,3$.
The constant is independent of $N$, $T$, and $L$.

Moreover, $u_N$ can be written as $u_N = w_N + v_N$,
where $w_N$ solves~\eqref{eq:periodic w} and satisfies the bounds
in Definition~\ref{def:bounded-moment set},
and $v_N \in C([0,T];\, H^{1-\varepsilon}(\Lambda_N))$ has norm at most $T M$.
\end{lemma}
\begin{proof}
Although the definition of $B_M$ uses the non-truncated linear equation,
we may pass to the truncated equation since $\Cc_t$ and $\Cs_t$ commute with $P_N$.

By construction, a local solution $u_N^{(k)} = w_N^{(k)} + v_N^{(k)}$ exists on each interval
$[k\tau/2, (k+2)\tau/2]$.
As the intervals overlap and each local solution is continuous and unique,
the global solution has the same properties.
In the decomposition, the bound on $w$ and its Wick powers
follows from Lemma~\ref{thm:linear moment bounds}.
We extend $v_N^{(k)}$ to all times by the mild solution formula
\begin{equation}
v_N(t) \coloneqq -\int_0^t \Cs_{t-s} \wick{u_N(s)^3} \ds.
\end{equation}
Thanks to the regularizing effect of $\Cs_{t-s}$, it satisfies
\begin{equation}
\norm{v_N(t)}_{H^{1-\varepsilon}(\Lambda_L)}
\lesssim \int_0^t \norm{\wick{u_N(s)^3}}_{H^{-\varepsilon}(\Lambda_L)} \ds
\lesssim T M.
\end{equation}
It thus remains to verify~\eqref{eq:local growth bound}.

For $j=1$ the claim follows immediately from
\begin{equation}
\norm{v_N}_{L^\infty([0,T], H^{1-\varepsilon}(\Lambda_L))}
+ \norm{w_N}_{L^\infty([0,T];\, H^{-\varepsilon}(\Lambda_L))}
\lesssim T M + M.
\end{equation}
For $j=2$ we are to estimate
\begin{equation}
\norm{\wick{w_N^2}}_{L^\infty H^{-\varepsilon}}
+ 2\norm{v_N w_N}_{L^\infty H^{-\varepsilon}}
+ \norm{v_N^2}_{L^\infty H^{-\varepsilon}}.
\end{equation}
Here the only relevant difference is estimating
\begin{equation}
\norm{v_N w_N}_{L^\infty H^{-\varepsilon}}
\lesssim \norm{v_N}_{L^\infty H^{2\varepsilon}}
    \norm{w_N}_{L^\infty \mathcal C^{-\varepsilon}}
\end{equation}
with Besov multiplication and Hölder.
Thanks to regularity of $v$, we have
\begin{equation}
\norm{v_N^2}_{L^\infty H^{2\varepsilon}}
\lesssim \norm{v_N}_{L^\infty H^{1-\varepsilon}}^2
\leq (T M)^2.
\end{equation}
The case $j=3$ follows similarly.
\end{proof}

Moreover, this set of initial data has high probability.
Here we use the finite-dimensional invariance to bound the probabilities.

\begin{lemma}[Data has high probability]\label{thm:local high probability data}
Given $k \in \N$, there exists $M_k$ such that
$\vec{\mu}_{L,N}(\mathcal B_{M_k,L,N}) \geq 1 - 2^{-k}$.
The value of $M_k$ depends on $L$ and $T$ but not $N$.
\end{lemma}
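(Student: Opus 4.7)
The plan is to combine the Liouville-type invariance of $\mu_{L,N}$ with a Chebyshev estimate on the linear moment bounds. Since the truncated flow $\flow_{N,\cdot}$ preserves $\mu_{L,N}$ by Theorem~\ref{thm:liouville invariance}, each preimage $\flow_{N,\tau/2}^{-j}(B_M)$ has the same $\mu_{L,N}$-measure as $B_M$, so by subadditivity
\[
\mu_{L,N}(\mathcal B_{M,L,N}^c) \leq (2m+1)\, \mu_{L,N}(B_M^c).
\]

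To control $\mu_{L,N}(B_M^c)$, I apply Markov's inequality: for any $p < \infty$,
\[
\mu_{L,N}(B_M^c) \leq M^{-p} \sum_{j=1}^3 \E_{\mu_{L,N}} \smallnorm{\wick{w^j}}_{L^4([0,1];\, \mathcal C^{-\varepsilon/2}(\rho))}^p,
\]
where $w$ is the linear solution with initial data sampled from $\mu_{L,N}$. The task then reduces to bounding these expectations by a constant $C_{p,L}$ that is uniform in $N$. This is an exact repeat of the argument in Lemma~\ref{thm:linear moment bounds}: decompose the initial data into the Gaussian part $P_N Z_L$ and a more regular perturbation $P_N \phi_{L,N}$; use the analogue of Lemma~\ref{thm:linear gff plus regular} to see that the Gaussian component of $w$ remains distributed as a truncated GFF at every time; and control $\Cc_t P_N \phi_{L,N}$ in $H^{2-\varepsilon}(\rho)$ uniformly in $N$ via (the truncated version of) Corollary~\ref{cor:moments-strong}.

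Finally, I balance the two estimates. Theorem~\ref{thm:fixpoint finished} gives $\tau \gtrsim_\Lambda M^{-6}$ for $M \geq 1$, so the number of iterations satisfies $2m+1 \lesssim_{L,T} M^6$. Taking $p$ strictly larger than $6$ (say $p=10$) yields
\[
\mu_{L,N}(\mathcal B_{M,L,N}^c) \lesssim_{L,T,p} C_{p,L}\, M^{6-p},
\]
and one chooses $M_k$ as a function of $L, T, k$ large enough to make this at most $2^{-k}$.

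The main technical point is the uniform-in-$N$ moment bound of the second step. This essentially reduces to showing that the stochastic estimates of Section~\ref{sec:stochastic} survive insertion of the projection $P_N$. One can either re-run the stochastic quantization argument of Section~\ref{sec:SQ} with $P_N$ inserted (using that $P_N$ is bounded on all Besov spaces uniformly in $N$), or obtain the bounds by total variation convergence $\mu_{L,N} \to \mu_L$ as $N \to \infty$ at fixed $L$. Either way the resulting constant $C_{p,L}$ is finite for each $L$ and independent of $N$, which is precisely what the statement demands.
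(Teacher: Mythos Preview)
Your proof is correct and follows essentially the same route as the paper: union bound plus invariance to reduce to a single $\mu_{L,N}(B_M^c)$, then Markov's inequality, then balance $m \lesssim M^6$ against $M^{-p}$ by taking $p$ large. The paper is terser on the uniform-in-$N$ moment bound (it simply cites Lemma~\ref{thm:linear moment bounds} and asserts uniformity), whereas you spell out why the truncation does not spoil the estimate; this is a legitimate point to address and your two suggested justifications are both adequate.
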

\begin{proof}
We may first use the triangle inequality and union bound to estimate
\begin{equation}
\begin{split}
&\Prob\left( \max_{\substack{j=1,2,3\\ k=0,\ldots,m}}
    \norm{\wick{w_N^j}}_{C([k\tau,k\tau+1];\, \mathcal C^{-\varepsilon}(\rho))} > M \right)\\
\leq\; &\sum_{j=1}^3 \sum_{k=0}^m \Prob \left(
    \norm{\wick{w_N^j}}_{C([k\tau,k\tau+1];\, \mathcal C^{-\varepsilon}(\rho))} > M \right).
\end{split}
\end{equation}
The pointwise-in-time norms
$\max_k \norm{(u(k\tau), \partial_t u(k\tau))}_{\dataspace^{-\varepsilon}(\rho)}$
are bounded with the same argument.
Then $\Prob((\mathcal B_{M_k,L,N})^c)$ is bounded from above by
\begin{equation}
\begin{split}
&\mathrel{\phantom{=}} \sum_{k=0}^m
    \frac{\E \norm{\wick{w_N^j}}_{C([k\tau,k\tau+1];\, \mathcal C^{-\varepsilon}(\rho))}^p
        + \E \norm{\vec u(k\tau)}_{\dataspace^{-\varepsilon}(\rho)}^p}{M^p}\\
&\lesssim m
    \frac{\E \norm{\wick{w_N^j}}_{C([0,1];\, \mathcal C^{-\varepsilon}(\rho))}^p
        + \E \norm{\vec u_0}_{\dataspace^{-\varepsilon}(\rho)}^p}{M^p}.
\end{split}
\end{equation}
The expectations are bounded by Section~\ref{sec:SQ} and Lemma~\ref{thm:linear moment bounds}
for any large $p$;
this estimate is uniform in $N$.
Now we substitute $m = T / \tau$ and $\tau = C_L M^{-3}$ from Theorem~\ref{thm:fixpoint finished}.
To finish the proof, we can choose e.g.\ $p=6$ to get the final estimate
\begin{equation}
\Prob\left( w_N \notin \mathcal B_{M_k,L,N} \right)
\leq C_L T M^{-3},
\end{equation}
which implies that the claim holds when
$M_k = C_L (2^k T)^{1/3}$.
\end{proof}

\subsection{Invariance of non-truncated measure}

Let us use Lemma~\ref{thm:local high probability data} to rename the sets of initial data
defined above.
We can then take a limit of these sets and get a high-probability set of initial data
with respect to the untruncated measure $\vec{\mu}_L$ defined in Theorem~\ref{thm:SQ}.
We follow here the argument of Burq and Tzvetkov \cite[Section 6]{burq_invariant_2007}.

\begin{definition}[High-probability set of data]\label{def:periodic data}
We define the set $\mathcal D_{k,L,N}$ to equal $\mathcal B_{M_k,L,N}$,
where $M_k$ is chosen with Lemma~\ref{thm:local high probability data}
such that $\vec{\mu}_{L,N}(\mathcal D_{k,L,N}) \geq 1 - 2^{-k}$.
\end{definition}

\begin{definition}[Limiting set of initial data]\label{def:limiting initial data}
We define $\mathcal D_{k,L} \subset \dataspace^{-\varepsilon}(\rho)$
as the set of limits $(u_0, u_0')$ of all sequences
$((u_{0,N_m}, u_{0,N_m}') \in \mathcal D_{k,L,N_m})_{m \in \N}$
that have $N_m \to \infty$ and converge in $\dataspace^{-\varepsilon}(\rho)$.
\end{definition}

\begin{lemma}[Total variation convergence]
We have
\[
\lim_{N \to \infty} \sup_A \abs{\vec\mu_L(A) - \vec\mu_{L,N}(A)} = 0,
\]
where the supremum is taken over all
measurable subsets of $\dataspace^{-\varepsilon}(\Lambda_L)$.
\end{lemma}
\begin{proof}
It suffices to consider the measure componentwise.
See e.g.~\cite[Remark~3]{barashkov_variational_2020} for the result on $\mu_L$.
\end{proof}

\begin{theorem}[Estimate for $\mathcal D_{k,L}$]\label{thm:limiting initial data}
We have $\vec{\mu}_L(\mathcal D_{k,L}) \geq 1 - 2^{-k}$.
\end{theorem}
\begin{proof}
It follows from the definition that
\begin{equation}
\limsup_{N \to \infty} \mathcal D_{k,L,N} \subset \mathcal D_{k,L},
\end{equation}
and then Fatou's lemma implies
\begin{equation}
\begin{split}
\vec{\mu}_{L}(\mathcal D_{k,L})
&\geq \vec{\mu}_{L}\left( \limsup_{N \to \infty} \mathcal D_{k,L,N} \right)\\
&\geq \limsup_{N \to \infty} \vec{\mu}_{L}\left( \mathcal D_{k,L,N} \right)\\
&= \limsup_{N \to \infty} \vec{\mu}_{L,N} \left( \mathcal D_{k,L,N}\right)\\
&\geq 1 - 2^{-k}.
\end{split}
\end{equation}
Here the equality holds by the total variation convergence.
\end{proof}

To show invariance of the limiting measure as $N \to \infty$,
we need to approximate full solutions by Fourier-truncated solutions.
The next lemma gives convergence in a qualitative sense.
It depends on pointwise bounds that follow from Fourier projections in Besov spaces.
For them we need to drop the regularity of our target space by $\varepsilon$.
Again, this change is irrelevant since $\varepsilon$ is arbitrarily small.

\begin{theorem}[Limit solves NLW]\label{thm:local limit solves nlw}
For almost all initial data $(u_0, u_0') \in \mathcal D_{k,L}$,
equation~\eqref{eq:nlw bounded} has a unique mild solution $u$ up to time $T$,
satisfying the moment bound in Definition~\ref{def:bounded-moment set} with $M = M_k$.
Moreover if $u_m$ are the solutions to~\eqref{eq:nlw truncated} with data $(u_{0,N_m}, u_{0,N_m}')$
from the approximating sequence,
then $u_m \to u$ in the space $C([0,T];\, H^{-2\varepsilon})$.

Consequently, \eqref{eq:nlw bounded} has a unique mild solution for $\mu_L$-almost all data.
We then denote the flow of \eqref{eq:nlw bounded} by $\flow_{L,t}$.
\end{theorem}
\begin{proof}
As Theorem~\ref{thm:fixpoint finished} holds in the untruncated case,
the solution $u$ with limiting initial data $(u_0, u_0')$ exists at least up to a short time.
We will extend it to $T$ by a continuity argument.

As the linear propagators
$(\Cc_t, \Cs_t) \colon \dataspace^{-\varepsilon}(\Lambda_L) \to H^{-\varepsilon}(\Lambda_L)$
are continuous, the linear part converges for all times:
\begin{equation}
w(t) = \Cc_t u_0 + \Cs_t u_0'
= \lim_{m \to \infty} \left( \Cc_t u_{0,N_m} + \Cs_t u_{0,N_m}' \right).
\end{equation}
Let us then consider the integral part in \eqref{eq:mild solution}.
We need to show that
\begin{equation}
\lim_{m \to \infty} \int_0^t \Cs_{t-s} \left( P_{N_m} \wick{P_{N_m} u_m^3} - \wick{u^{3}} \right)\!(x,s) \ds = 0
\end{equation}
for all $0 \leq t \leq T$.
By Lemma~\ref{thm:local invariance duhamel} the integral is bounded
in $H^{1-2\varepsilon}(\Lambda)$ norm by
\begin{equation}\label{eq:limit solves nlw duhamel}
\left( \max_{j = 1, 2, 3} \norm{\wick{w^j} - \wick{w_{N_m}^j}}
    _{L^\infty([0,T];\, \mathcal C^{-2\varepsilon}(\Lambda_L))} + 2^{-\varepsilon N_m} \right)\!
M_k^3 \exp (C M_k^2),
\end{equation}
once we have shown the moment bound
\begin{equation}\label{eq:limit solves nlw moment}
\norm{\wick{w^j}}_{C([0,T];\, \mathcal C^{-\varepsilon}(\rho))} \leq M_k.
\end{equation}

Since $t \mapsto \wick{w(t)^j}$ only depends on the initial data $\vec{u}_0$,
let us introduce the notations $F^{j}(\vec{u}_0) \coloneqq \wick{w^j}$
and $F^{j,N}(\vec{u}_0) \coloneqq \wick{w_N^j}$.
These functions are measurable as limits of the continuous
approximations from Lemma~\ref{thm:wick approx}.

By the convergence in expectation shown in Lemma~\ref{thm:linear moment bounds}
and changing the probability space with Skorokhod's theorem (Lemma~\ref{thm:skorokhod}),
we have
\begin{equation}
\lim_{N \to \infty} \norm{F^{j,N}(\vec{u}_0) - F^{j}(\vec{u}_0)}
    _{C([0,T];\, \mathcal C^{-\varepsilon}(\Lambda_{L}))} = 0
\end{equation}
for $\mu_L$-almost every $\vec{u}_0$.
Thus by Egorov's theorem there exists a set $A^{1}_{\delta}$
such that $\vec{\mu}_{L}((A^{1}_{\delta})^{c}) \leq \delta$ and 
\begin{equation}
\lim_{N \to \infty} \sup_{u\in A^{1}_{\delta}}
    \norm{F^{j,N}(\vec{u}_0) - F^{j}(\vec{u}_0)}
        _{C([0,T];\, \mathcal C^{-\varepsilon}(\Lambda_{L}))} = 0.
\end{equation}
Moreover by Lusin's theorem we can find $A_{\delta}^{2}$ such that
$\vec{\mu}_{L}((A^{2}_{\delta})^{c}) \leq \delta$ and $F^{j}$ is continuous on $A^{2}_{\delta}$.

Let us then set $A_{\delta}=A^{1}_{\delta} \cap A^{2}_{\delta}$.
If $\vec{u}_{0,N_m} \in A_\delta \cap \mathcal D_{k,L,N_m}$ is a sequence
converging to $u_0 \in A_\delta \cap \mathcal D_{k,L}$, then
\begin{equation}
\begin{split}
&\mathrel{\phantom{=}} \lim_{m \to \infty}
    \norm{F^{j,N_m}(\vec{u}_{0,N_m}) - F^{j}(\vec{u}_0)}
        _{C([0,T];\, \mathcal C^{-\varepsilon}(\Lambda_{L}))}\\
&\leq \lim_{m \to \infty} \sup_{\vec{u}_{0} \in A_{\delta}}
    \norm{F^{j,N_m}(\vec{u}_0)- F^{j}(\vec{u}_0)}
        _{C([0,T];\, \mathcal C^{-\varepsilon}(\Lambda_{L}))}\\
&\qquad + \lim_{m \to \infty} \norm{F^{j}(\vec{u}_{0,N_m}) - F^{j}(\vec{u}_0)}
            _{C([0,T];\, \mathcal C^{-\varepsilon}(\Lambda_{L}))}\\
&= 0.
\end{split}
\end{equation}
Hence on this subset of $\mathcal D_{k,L}$ we can approximate $\wick{w^j}$ by $\wick{w_N^j}$.
Combined with the definition of $\mathcal D_{k,L,N_m}$ this implies~\eqref{eq:limit solves nlw moment},
and the prefactor in~\eqref{eq:limit solves nlw duhamel} vanishes as $m \to \infty$.
By convergence in total variation we have
\begin{equation}
\begin{split}
\vec{\mu}_L(\limsup_{N \to \infty}(A_{\delta}\cap \mathcal{D}_{k,L,N}))
&\geq \limsup \vec{\mu}_{L}(A_{\delta}\cap \mathcal{D}_{k,L,N})\\
&\geq 1 - 2^{-k} - 2\delta.
\end{split}
\end{equation}
As we set
\begin{equation}
\mathcal{Q} = \bigcup_{k=1}^{\infty}\bigcup_{\delta>0}
    \limsup_{N\to \infty}(A_{\delta}\cap \mathcal{D}_{k,L,N}),
\end{equation}
we have that $\vec{\mu}_{L}(\mathcal{Q})=1$ and on $\mathcal{Q}$
there is a unique solution to~\eqref{eq:nlw bounded}.
\end{proof}

We can then proceed to invariance of the measure under the flow just found.
The next lemma shows that it is enough to show that $\vec{\mu}_L \circ \flow_{L,t}$ and $\vec{\mu}_L$
coincide when tested against a nice class of test functions.
We then only need to apply pointwise bounds for the flow in a high-probability set.

We will further advance this strategy in Lemma~\ref{thm:global reduction}.
This technique of adapting the test functions to the specific model is very common;
see the book of Ethier and Kurtz \cite[Section~3.4]{ethier_markov_1986}.

\begin{lemma}[Test functions]\label{thm:test functions}
Let $\mathcal F$ be the set of bounded Lipschitz functions
$\varphi \colon \dataspace^{-2\varepsilon}(\Lambda_L) \to \R$.
Let $\mu_1$ and $\mu_2$ be Borel probability measures on $\dataspace^{-2\varepsilon}(\Lambda_L)$.
If
\[
\int \varphi(f) \diff\mu_1(f) = \int \varphi(f) \diff\mu_2(f)
\]
for all $\varphi \in \mathcal F$, then $\mu_1 = \mu_2$.
\end{lemma}
\begin{proof}
It suffices to show that $\mathcal F$ separates points in the sense of~\cite{ethier_markov_1986}.
The claim then follows from~\cite[Theorem~3.4.5]{ethier_markov_1986}.

Fix two distinct elements $(f, f')$ and $(g, g')$ in $\dataspace^{-2\varepsilon}(\Lambda_L)$.
By general theory of distributions,
there exist $\alpha, \beta \in C^\infty(\Lambda_L)$ such that
$\dual{\alpha, f-g} \neq 0$ or $\dual{\beta, f'-g'} \neq 0$.
We then define the bounded functions
\begin{equation}
\eta_1(f, f') \coloneqq \arctan(\dual{\alpha, f}), \quad
\eta_2(f, f') \coloneqq \arctan(\dual{\beta, f'}).
\end{equation}
They are Lipschitz continuous since
\begin{equation}
\abs{\arctan(\dual{\beta, f'}) - \arctan(\dual{\beta, g'})}
\lesssim \abs{\dual{\beta, f' - g'}}
\lesssim \norm{\beta}_{H^2} \norm{f' - g'}_{H^{-1-2\varepsilon}},
\end{equation}
and similarly for $\eta_1$ in $H^{-2\varepsilon}$.
Hence $\eta_1$ and $\eta_2$ belong to $\mathcal F$,
and by construction $\eta_i(f, f') \neq \eta_i(g, g')$ for at least one of $i = 1, 2$.
\end{proof}

\begin{theorem}[Invariance of finite-volume measure]\label{thm:local invariance}
We have $\vec\mu_L(\flow_{L,t} A) = \vec\mu_L(A)$ for all $t \in {[{0},{T}]}$.
\end{theorem}
\begin{proof}
We apply Lemma~\ref{thm:test functions} so that it suffices to show
\begin{equation}
\int f(\Phi_{L,t} \vec\varphi) \diff\vec\mu_{L}(\vec\varphi)
    - \int f(\vec\varphi) \diff\vec\mu_{L}(\vec\varphi) = 0
\end{equation}
for all bounded and Lipschitz continuous $f \colon \dataspace^{-2\varepsilon}(\Lambda_L) \to \R$.
We split the integrals over the sets $\mathcal{D}_{k,L} \cap \mathcal{Q}_{\delta,N}$
and $(\mathcal{D}_{k,L} \cap \mathcal{Q}_{\delta,N})^{c}$,
where we restrict the linear solution $w$ to
\[
\mathcal{Q}_{\delta,N} \coloneqq \left\{ (u_n,u_{n}') \colon
    \max_{j \in \{ 1,2,3 \}} \norm{\wick{(P_N w)^j} - \wick{w^j}}
        _{L^\infty([0,T];\, \mathcal C^{-\varepsilon}(\rho))}
    \leq \delta \right\}.
\]
By Lemma~\ref{thm:linear moment bounds} we have
$\vec\mu_{L}(\mathcal{Q}_{\delta,N}) \geq 1 - \delta$
for all $N$ large enough.
We can then estimate the residual contribution as
\begin{equation}
\abs{\int_{(\mathcal{D}_{k,L}\cap \mathcal{Q}_{\delta,N})^c} \hspace{-1em}
        [f(\Phi_{L,t} \vec\varphi) - f(\vec\varphi)] \diff\vec\mu_{L}(\vec\varphi) }
\leq 2\vec\mu_L((\mathcal{D}_{k,L} \cap \mathcal{Q}_{\delta,N})^c) \norm{f}_\infty.
\end{equation}
Let us then note that
\begin{equation}
\begin{split}
\int f(\vec\varphi) \diff\vec\mu_L(\vec\varphi)
&= \int f(\flow_{L,N,t} \vec\varphi) \diff\vec\mu_L(\vec\varphi)\\
    &\quad + \int f(\flow_{L,N,t} \vec\varphi) \diff\,[\vec\mu_{L,N}(\vec\varphi) - \vec\mu_L(\vec\varphi)]\\
    &\quad + \int [f(\vec\varphi) - f(\flow_{L,N,t} \vec\varphi)] \diff\vec\mu_{L,N}(\vec\varphi)\\
    &\quad + \int f(\vec\varphi) \diff\,[\vec\mu_L(\vec\varphi) - \vec\mu_{L,N}(\vec\varphi)].
\end{split}
\end{equation}
On the second and fourth lines we use boundedness of $f$ and the total variation convergence,
whereas the third line vanishes by invariance of the truncated flow.
Hence we can write
\begin{equation}\label{eq:lipschitz-d}
\begin{split}
&\mathrel{\phantom{=}} \lim_{N \to \infty}
    \abs{ \int f(\Phi_{L,t} \vec\varphi) - f(\vec\varphi) \diff\vec\mu_{L}(\vec\varphi) }\\
&\lesssim \lim_{N \to \infty} \int_{\mathcal{D}_{k,L}\cap \mathcal{Q}_{\delta,N}} \hspace{-2em}
        \abs{ f(\Phi_{L,t} \vec\varphi)-f(\Phi_{L,N,t} \vec\varphi) } \diff\vec\mu_{L}
    + 2\vec\mu_L((\mathcal{D}_{k,L} \cap \mathcal{Q}_{\delta,N})^c) \norm{f}_\infty\\
&\leq 
    \int_{\mathcal{D}_{k,L}\cap \mathcal{Q}_{\delta,N}} \hspace{-2em}
        (\operatorname{Lip}_f + 2\norm{f}_\infty)
         \lim_{N \to \infty} \norm{\vec\varphi - \Phi_{L,N,t} \vec\varphi }_{\dataspace^{-\varepsilon}} \diff\vec\mu_{L}\\
&\qquad + 2\vec\mu_L((\mathcal{D}_{k,L} \cap \mathcal{Q}_{\delta,N})^c) \norm{f}_\infty.
\end{split}
\end{equation}
It therefore suffices to bound the difference of flows in both components.

By the uniform bounds in Theorem~\ref{thm:limiting initial data},
we know that the full solution $u(t) = \Pi_1 \flow_{L,t}(\vec u_0)$
and the truncated solution $u_N(t) = \Pi_1 \flow_{L,N,t}(P_N \vec u_0)$
are well-defined for all $t \leq T$.
We split the pathwise difference $u(t) - u_N(t)$ again into linear and Duhamel parts
\begin{equation}\label{eq:local invariance pathwise}
w(t) - w_N(t)
+ \int_0^t \Cs_{t-s} [\wick{ u(s)^3 } - P_{N} \wick{ u_N(s)^3 }] \ds.
\end{equation}
For the linear part we use the bound
\begin{equation}\label{eq:local invariance linear}
\begin{split}
\norm{w(t) - w_N(t)}_{H^{-2\varepsilon}(\Lambda_L)}
&= \norm{P_{>N} w(t)}_{H^{-2\varepsilon}(\Lambda_L)}\\
&\lesssim 2^{-\varepsilon N} \norm{w(t)}_{H^{-\varepsilon}(\Lambda_L)}\\
&\leq 2^{-\varepsilon N} M_k
\end{split}
\end{equation}
coming from the definition of $\mathcal D_{k,L}$.
We separate the estimate for the Duhamel term as Lemma~\ref{thm:local invariance duhamel} below.
Together they give the bound
\begin{equation}\label{eq:local invariance u bounded}
\lim_{N \to \infty} \norm{u(t)-u_N(t)}_{H^{-2\varepsilon}}
\lesssim \delta M_k^3 \exp (C M_k^2).
\end{equation}

For the time derivative component $\partial_t u(t) = \Pi_2 \flow_{L,t}(\vec u_0)$,
we use Lemma~\ref{thm:local invariance derivative} to find
\begin{equation}\label{eq:local invariance deriv bounded}
\lim_{N \to \infty} \norm{\partial_t u(t) - \partial_t u_N(t)}_{H^{-1-2\varepsilon}(\Lambda_L)}
\lesssim \delta M_k^3 \exp(C M_k^2).
\end{equation}
Hence~\eqref{eq:lipschitz-d}
is bounded by~\eqref{eq:local invariance u bounded} and~\eqref{eq:local invariance deriv bounded}
and the measure of $(\mathcal D_{k,L} \cap \mathcal Q_{\delta,N})^c$.
We can now finish by passing first $\delta \to 0$ and then $k \to \infty$.
\end{proof}

The pointwise bounds used in the preceding two theorems are as follows:

\begin{lemma}[Fourier approximation, nonlinearity]
\label{thm:local invariance duhamel}
Let us denote
\[
H_{N} \coloneqq
    \max_{j = 1, 2, 3} \norm{\wick{w^j} - \wick{w_N^j}}
        _{L^\infty([0,T];\, \mathcal C^{-2\varepsilon}(\Lambda_L))}.
\]
When the initial data $(u_0, u_0') \in \mathcal D_{k,L}$,
the solutions $u$ and $u_N$ to~\eqref{eq:nlw bounded} and~\eqref{eq:nlw truncated} satisfy
\[
\bignorm{ \int_0^t \Cs_{t - s} \left[ \wick{u(s)^3} - P_N \wick{u_N(s)^3} \right] \ds}
    _{H^{1 - 2 \varepsilon}(\Lambda_L)}
\!\!\lesssim (H_{N} + 2^{-\varepsilon N}) M_k^3 \exp (C M_k^2)
\]
for all $0 \leq t \leq T$.
\end{lemma}
\begin{proof}
Let us write the left-hand side $\norm{v(t) - v_N(t)}_{H^{1-2\varepsilon}}$ as
\begin{equation}
\bignorm{\int_0^t \Cs_{t - s} \left[ \wick{u(s)^3} - \wick{u_N(s)^3} \right] \ds
    + \int_0^t \Cs_{t-s} P_{>N} \wick{u_N(s)^3} \ds}_{H^{1-2\varepsilon}}.
\end{equation}
The last term is bounded with boundedness of $\Cs_t$ and the Bernstein estimate:
\begin{equation}
\int_0^t \norm{P_{>N} \wick{u_N(s)^3}}_{H^{-2\varepsilon}} \ds
\lesssim 2^{-\varepsilon N} M_k^3.
\end{equation}
Similarly, we estimate the other terms as
\begin{equation}
\bignorm{\int_0^t \Cs_{t-s} [\wick{u(s)^3} - \wick{u_N(s)^3}] \ds}_{H^{1-2\varepsilon}}
\lesssim \int_0^t \norm{\wick{u(s)^3} - \wick{u_N(s)^3}}_{H^{-2\varepsilon}} \ds.
\end{equation}
We can rewrite the pointwise difference as
\begin{equation}\label{eq:local invariance duhamel pointwise}
\begin{split}
\wick{u^3} - \wick{u_N^3}
&= \sum_{j = 0}^3 \binom{3}{j} (\wick{w^j} v^{3 - j} - \wick{w^j_N} v_N^{3 - j})\\
&= \sum_{j = 0}^3 \binom{3}{j} \left[ (\wick{w^j} - \wick{w^j_N}) v_N^{3 - j}
    + \wick{w^j} (v^{3 - j} - v_N^{3 - j}) \right].
\end{split}
\end{equation}
When $j = 0$, the first summand vanishes,
and otherwise it is bounded with
\begin{equation}
\begin{split}    
&\mathrel{\phantom{=}} \int_0^t \norm{(\wick{w^j} - \wick{w^j_N}) v_N^{3 - j}}_{H^{-2\varepsilon}} \ds\\
&\lesssim \int_0^t \norm{\wick{w^j} - \wick{w^j_N}}_{\mathcal C^{-2\varepsilon}}
    \norm{v_N^{3 - j}}_{H^{3\varepsilon}} \ds\\
&\lesssim \norm{\wick{w^j} - \wick{w^j_N}}_{L^\infty([0,T];\, \mathcal C^{-2\varepsilon})}
    \norm{v_N}_{L^\infty([0,T];\, H^{1-2\varepsilon})}^{3 - j}\\
&\lesssim H_{N} M_k^{3-j}.
\end{split}
\end{equation}
The second summand vanishes when $j = 3$,
and for $j \leq 2$ we have
\begin{equation}
\begin{split}
\int_0^t \norm{\wick{w^j} (v^{3 - j} - v_N^{3 - j})}_{H^{-2\varepsilon}} \ds
&\leq \int_0^t \norm{\wick{w^j}}_{\mathcal C^{-2\varepsilon}}
    \norm{v^{3 - j} - v_N^{3 - j}}_{H^{3\varepsilon}} \ds\\
&\leq \int_0^t K_j \norm{\wick{w^j}}_{\mathcal C^{-2\varepsilon}}
    \norm{v - v_N}_{H^{1-2\varepsilon}} \ds,
\end{split}
\end{equation}
where
\begin{equation}
K_j = \begin{cases}
    2 \norm{v}_{H^{1-2\varepsilon}}^2 + 2 \norm{v_N}_{H^{1-2\varepsilon}}^2, & j = 0,\\
    \norm{v}_{H^{1-2\varepsilon}} + \norm{v_N}_{H^{1-2\varepsilon}}, & j = 1,\\
    1, & j = 2
\end{cases}
\end{equation}
is bounded by $C M_k^2$.
Hence we have shown
\begin{equation}
\begin{split}
\norm{v(t) - v_N(t)}_{H^{1-2\varepsilon}}
&\lesssim (2^{-\varepsilon N} + H_{N}) M_k^3\\
&\qquad + \int_0^t \sum_{j=0}^2 K_j \norm{\wick{w^j}}_{\mathcal C^{-2\varepsilon}}
        \norm{v - v_N}_{H^{1-2\varepsilon}} \ds,
\end{split}
\end{equation}
from which Grönwall's inequality yields
\begin{equation}\label{eq:local invariance duhamel finished}
\norm{v(t) - v_N(t)}_{H^{1-2\varepsilon}}
\lesssim (2^{-\varepsilon N} + H_{N}) M_k^3
    \exp\!\left( \int_0^t \sum_{j=0}^2 K_j \norm{\wick{w^j}}_{\mathcal C^{-2\varepsilon}} \ds \right).
\qedhere
\end{equation}
\end{proof}

\begin{lemma}[Fourier approximation, derivative]\label{thm:local invariance derivative}
Under the assumptions of Lemma~\ref{thm:local invariance duhamel}, we also have
\[
\lim_{N \to \infty} \norm{\partial_t u(t) - \partial_t u_N(t)}_{H^{-1-2\varepsilon}(\Lambda_L)}
\lesssim (H_N + 2^{-\varepsilon N}) M_k^6 \exp(C M_k^2).
\]
\end{lemma}
\begin{proof}
By the mild formulation we have
\begin{equation}
\begin{split}
\frac{u(t+s) - u(t)}{s}
&= \frac{\Cc_{t+s} - \Cc_t}{s} u_0 + \frac{\Cs_{t+s} - \Cs_t}{s} u_0'\\
&\quad - \int_0^t \frac{\Cs_{t+s-r} - \Cs_{t-r}}{s} \wick{u(r)^3} \diff r\\
&\quad - \frac 1 s \int_t^{t+s} \Cs_{t+s-r} \wick{u(r)^3} \diff r.
\end{split}
\end{equation}
The first two terms give a bounded operator
from $\dataspace^{-2\varepsilon}(\Lambda_L)$ to $H^{-1-2\varepsilon}(\Lambda_L)$
as $s \to 0$, as can be seen by considering the Fourier multiplier symbols.
Lemma~\ref{thm:local growth bound} implies that $\wick{u(r)^3}$ is continuous in $r$,
so the last two terms converge to
\begin{equation}
\int_0^t \Cc_{t-r} \wick{u(r)^3} \diff r + \wick{u(t)^3}.
\end{equation}
Hence
\begin{equation}
\begin{split}
&\mathrel{\phantom{=}} \norm{\partial_t [u(t) - u_N(t)]}_{H^{-1-2\varepsilon}(\Lambda_L)}\\
&\lesssim \norm{P_{>N} \vec u_0}_{\dataspace^{-2\varepsilon}(\Lambda_L)}
    + \int_0^T \norm{ \wick{u(r)^3} - \wick{u_N(r)^3} }_{H^{-1-2\varepsilon}(\Lambda_L)} \diff r\\
&\qquad + \norm{\wick{u(t)^3} - \wick{u_N(t)^3}}_{H^{-1-2\varepsilon}(\Lambda_L)}.
\end{split}
\end{equation}
Now the first term is estimated as in~\eqref{eq:local invariance linear}
and the second term is at most $C (H_N + 2^{-\varepsilon N}) M_k^3 \exp(C M_k^2)$
by a direct modification of Lemma~\ref{thm:local invariance duhamel}.
Finally, the last term is bounded by~\eqref{eq:local invariance duhamel pointwise}
and~\eqref{eq:local invariance duhamel finished}.
\end{proof}

\section{Global invariance of NLW}\label{sec:globalization}

Let us now move to \eqref{eq:nlw} over $\R^2 \times \R_+$.
Lemma~\ref{thm:speed of sound} states that at any given point the linear propagators only depend on the light cone,
and we show below in Lemma~\ref{thm:nonlinear speed of sound}
that the same holds for the nonlinear term.
We are thus able to go back to periodic solution theory. 
Within any bounded region of $\R^2 \times \R_+$,
it is impossible to distinguish between different $L$-periodized flows
as soon as $L$ is large enough.
We use this property to pass $L \to \infty$.

Let us first define what we mean by a solution to \eqref{eq:nlw}.
We still fix $T > 0$ throughout this section.
We pass to $\R_+$ in the concluding Lemma~\ref{thm:global post-process}.

\begin{definition}[Solution on $\R^2$]\label{def:Solution Whole Space}
Let $u_0, u'_0$ be random distributions with $\text{Law}(u_{0}, u'_{0}) = \vec{\mu}$,
and set $w(t) \coloneqq \Cc_t u_{0} + \Cs_t u'_{0}$.

A distribution $u$ solves \eqref{eq:nlw} on $\R^2$
with initial data $(u_{0}, u'_{0})$ if there exists $v \colon \R_+ \times \R^2 \to \R$
such that
\begin{itemize}
\item $u = w + v$,
\item for any spatial cutoff $\chi \in C^{\infty}_{c}(\R^2)$ we have
    $\chi v \in C([0,T] \times H^{1-\varepsilon})$, and
\item $\displaystyle v(t) = \int_0^t \Cs_{t-s}
    \left[ \sum_{j=1}^{3} \binom{3}{j} \wick{w^{3-j}(s)} \, v^{j}(s) \right] \ds$.
\end{itemize}
\end{definition}
Note that the right-hand side of the last point is well-defined
since $v$ is a function,
$\wick{w^j} \in C([0,T], \mathcal{C}^{-\varepsilon}(\rho))$
by Lemma~\ref{thm:linear convergence to full space},
and the kernel of $\Cs_{t-s}$ has bounded support by Lemma~\ref{thm:speed of sound}.  

\bigskip
Let us then introduce some notation used in this section.
As in Section~\ref{sec:periodic} we will denote by $(u_{L,0},u'_{L,0})$ initial data sampled from $\vec{\mu}_{L}$,
and by $u_{L}$ the corresponding solution to \eqref{eq:nlw bounded} constructed in
Lemma~\ref{thm:local limit solves nlw}, where also the flow $\flow_{L,t}$ is defined.
We will write $w_{L}(t) = \Cs_t u_{0,L} + \Cc_t u'_{0,L}$ as in \eqref{eq:periodic w}
and decompose $u_L = w_L + v_L$ as before.

We will also need some spatial cutoffs.
Given $R > 0$, we define two smooth, non-negative functions on $\R^2$:
\begin{itemize}
\item $\chi_{1}=1$ on $B(0,R)$ and $\chi_{1}=0$ outside of $B(0,2R)$, and
\item $\chi_{2}=1$ on $B(0,2R+T)$ and $\chi_{2}=0$ outside of $B(0, 3R+T)$.
\end{itemize}

We will first contruct the infinite-volume solution started from initial data sampled from $\mu$
in Section \ref{sec:construction infinite}.
In the process we will show that the unperiodic flow can be approximated by periodic solutions
started from periodic data.
Then we will show invariance in Section~\ref{sec:globalization measure}.

\subsection{Construction of solution in infinite volume}\label{sec:construction infinite}
Assuming that the period is large enough,
a periodic solution restricted to a compact domain $D$ and horizon time $T$
is independent of the periodization.
However, the initial data sampled from $\vec{\mu}_L$ still depends on the period~$L$.
In this section we quantify the convergence of solutions
and construct a limiting solution as $L \to \infty$.

Let us first construct a probabilistic solution set associated with compact $D \subset \R^2$.
This argument is analogous to Lemma~\ref{thm:local high probability data}, but with a twist:
by Theorem~\ref{thm:fixpoint finished}
the growth bound in $D$ is independent of the periodization,
but the local solution time $\tau$ is not.
However, at discrete times $\{ k\tau \}$ we can use the invariance of measure;
this property is qualitative and holds for all period lengths.

\begin{lemma}[Finite speed of propagation, nonlinear part]\label{thm:nonlinear speed of sound}
Fix $R > 0$ and let $L > 3R + T$.
Let $w_L$ be as above.
Assume that $v_{L} \in C([0,T];\, H^{1-\varepsilon}(\Lambda_{L}))$ solves
\[
v_{L}(t)=\int_{0}^{t} \sum_{i=1}^{3} \Cs_{t-s} \binom{3}{i} \wick{w_{L}(s)^{3-i}} \, v_{L}(s)^{i} \ds
\]
for all $t \in [0,T]$.
Let $\tilde v\in C([0,\tau];\, H^{1-\varepsilon}(\Lambda_{L}))$ for some $\tau \in (0, T]$ solve 
\begin{equation}\label{eq:speed of sound auxiliary}
\tilde v(t)=\int_{0}^{t} \sum_{i=1}^{3} \Cs_{t-s} \binom{3}{i} \chi_{2}\, \wick{w_{L}(s)^{3-i}} \, \tilde v(s)^{i} \ds .
\end{equation}
Then $v_{L}|_{B(0,R)}(t) = \tilde v|_{B(0,R)}(t)$ for all $t \leq \tau$.
\end{lemma}
\begin{proof}
It is sufficient to show that $(\tilde v-v_{L}) \I_{B(0,R+T-t)}=0$.
To see this we observe that by~Lemma \ref{thm:speed of sound}
\begin{equation}
\begin{split}
&\I_{B(0,R+T-t)}(\tilde v-v_{L})(t) \\
=\; &\I_{B(0,R+T-t)} \int_{0}^t \sum_{i=1}^{3} \binom{3}{i} \Cs_{t-s} \I_{B(0,R+T-s)}
    (\wick{w^{3-i}_{L}} [\tilde v^{i}-v_{L}^{i}])(s) \ds.
\end{split}
\end{equation}
Now we can use Lemma~\ref{thm:characteristic function}
to bound the above expression as 
\begin{equation}
    \begin{split}
&\norm{\I_{B(0,R+T-t)}(\tilde v-v_{L})(t)}_{H^{1/4}} \\
\leq\; & (R+T)^{1/2} \bignorm{\int_{0}^t \sum_{i=1}^{3} \binom{3}{i} \Cs_{t-s} \I_{B(0,R+T-s)}
    (\wick{w^{3-i}_{L}} [\tilde v^{i}-v_{L}^{i}])(s) \ds }_{H^{1-\varepsilon}}.
\end{split}
\end{equation}
Mimicking the proof of Lemma~\ref{thm:fixpoint contraction},
the norm can be estimated by
\begin{equation}
    \begin{split}
& \left(1+\norm{v_{L}}^{2}_{C([0,\tau];\, H^{1-\varepsilon}(\Lambda_{L}))}
    + \norm{\tilde v}^{2}_{C([0,\tau];\, H^{1-\varepsilon}(\Lambda_{L}))}\right) \\
\times & \sum_{i=1}^{3} \int_{0}^{t}
    \norm{\wick{w_{L}^{3-i}(s)} \I_{B(0,R+T-s)}}_{B_{14,\infty}^{-\varepsilon}(\Lambda_{L})}\\
    &\hspace{4em}\norm{\I_{B(0,R+T-s)} (\tilde v(s)-v_{L}(s))}_{H^{1/4}(\Lambda_{L})} \ds.
\end{split}
\end{equation}
We can again use Lemma~\ref{thm:characteristic function} to estimate
\begin{equation}
    \begin{split}
&\norm{\wick{w_{L}^{3-i}(s)} \I_{B(0,R+T-s)}}_{B_{14,\infty}^{-\varepsilon}(\Lambda_{L})}\\
\lesssim\;& \norm{\I_{B(0,R+T-s)}}_{B^{1/14}_{14,\infty}(\Lambda_{L})}
    \norm{\wick{w_{L}(s)^{3-i}}}_{\mathcal{C}^{-\varepsilon}(\Lambda_{L})} \\
\lesssim\;& (R+T)^{1/7} \norm{\wick{w_{L}(s)^{3-i}}}_{\mathcal{C}^{-\varepsilon}(\Lambda_{L})},
\end{split}
\end{equation}
which is integrable in time (for almost all $w_L$).
Hence we have shown
\begin{equation}
\begin{split}
&\mathrel{\phantom{=}} \norm{\I_{B(0,R+T-t)}(\tilde v(t) - v_{L}(t))}_{H^{1/4}}\\
&\lesssim \int_0^t \norm{\wick{w_{L}(s)^{3-i}}}_{\mathcal{C}^{-\varepsilon}(\Lambda_{L})}
    \norm{\I_{B(0,R+T-s)} (\tilde v(s)-v_{L}(s))}_{H^{1/4}(\Lambda_{L})} \ds,
\end{split}
\end{equation}
and Grönwall's inequality implies that the left-hand side is zero for all $t$.
\end{proof}

\begin{remark}\label{rem:restriction}
That a solution $\tilde v$ to \eqref{eq:speed of sound auxiliary} exists
follows from a straightforward fixed-point argument
for $t \leq \tau \simeq \min ( M^{-c}, R^{-c})$ where
\[
M= \sum_{i=1}^{3} \norm{\wick{w_{L}^{i}}}_{L^{2}([0,T];\, \mathcal{C}^{-\varepsilon}(B(0,3R+T)))}.
\]
Then 
$\norm{\tilde v}_{C([0,\tau];\, H^{-\varepsilon}(\Lambda_{L}))} \leq 2 M$.
The rest of the argument in Lemma~\ref{thm:nonlinear speed of sound} holds up to time $T$,
but below we will use the result only for a short time interval.
\end{remark}

\begin{lemma}[Bound for $v$ in a bounded domain]\label{thm:growth infinite volume}
For any compact $D\subset \mathbb{R}^{2}$ and $q\geq 1$,
there exists a constant $C_{D,T}$ independent of $L$ such that 
\[
\vec{\mu}_{L}(\norm{v_{L}}_{C([0,T];\,H^{1-\varepsilon}(D))} \geq M) \leq C_{D,T} M^{-{q}}.
\]
\end{lemma}
\begin{proof}
First, we have the following bound for any $\tau > 0$:
\begin{equation}
\norm{v_{L}}_{L^{\infty}({[{0},{T}]};\, H^{1-\varepsilon}(D))}
\leq \sup_{0\leq k \leq T/\tau}
    \norm{v_{L}}
        _{L^{\infty}({[{k\tau},{(k+1)\tau}]};\, H^{1-\varepsilon}(D))}.
\end{equation}
Let $R$ satisfy $D+B(0,T) \subset B(0,R)$.
If we assume that 
\begin{equation}\label{eq:global growth assumption}
\norm{\wick{(\flow_{\text{lin}} \flow_{L,k\tau} u_{0,L})^j}}_{C({[{0,1}]};\, H^{-2\varepsilon}(B(0,R)))}
\leq M
\end{equation}
for $j = 1, 2, 3$ and all $k \leq T / \tau$,
then the local solution theory and Lemma~\ref{thm:nonlinear speed of sound} imply
that a local nonlinear part $v_L$ exists and
\begin{equation}
\norm{v_{L}}_{L^\infty([k\tau,(k+1)\tau],H^{1-\varepsilon}(D))}
\leq 2M.
\end{equation}
This requires that $\tau \leq M^{-c}$ for $c\in \mathbb{N}$ sufficiently large.
From now on we fix $\tau=M^{-c}$.

The probability that $v_L$ can be constructed
is bounded from below by the probability of assumption \eqref{eq:global growth assumption} holding.
That in turn is bounded by
\begin{equation}
1- \Prob\left(
    \max_{\substack{j=1,2,3\\ k=0,\ldots,T/\tau}}
    \norm{\wick{(\flow_{\text{lin}} \flow_{L,k\tau} u_{0,L})^j}}
        _{C({[{0,1}]};\, H^{-2\varepsilon}(B(0,R)))}^2
    > M
\right).
\end{equation}
It is here that we use the invariance of $\vec{\mu}_{L}$ under $\flow_{L,t}$.
As in Lemma~\ref{thm:local high probability data},
we can then bound the probability from below by
\begin{equation}
1 - C T M^c \frac{\E \norm{\wick{(\flow_{\text{lin}} u_{0,L})^j}}
    _{C({[{0,1}]};\, H^{-2\varepsilon}(B(0,R)))}^p}
    {M^p}.
\end{equation}
The expectation is bounded by Lemma~\ref{thm:linear moment bounds} uniformly in $L$.
Again we conclude by choosing $p \geq c+q$.
\end{proof}

We will now construct the full solution $u$
by showing that $u_L$ is a Cauchy sequence.
Let us first show that the nonlinear parts $v_L$ form a Cauchy sequence
in a probabilistic set.

\begin{lemma}[Stability, $\phi^4$ component]\label{thm:global stability}
Assume that
\[
\max_{j=1,2,3} \norm{\wick{w_L^j}}_{C({[0,T]};\, \mathcal C^{-2\varepsilon}(\rho))} \leq M,
\quad \norm{v_{L}}_{L^\infty([0,T];\, H^{1-\varepsilon}(B(0,R+T)))} \leq M
\] 
hold for all $L \in \N \cup \{ \infty \}$. Set 
\[
H_{L,L'} \coloneqq \sup_{j \leq 3} \int_0^T \norm{(\wick{w_{L'}^j} - \wick{w_L^j})(s)}
    _{\mathcal C^{-2\varepsilon}(\rho)} \ds.
\]
Then for all $R$ there exists $C > 0$ (depending on $R$) such that
\begin{equation} \label{eq:bound-v}
\norm{\I_{B(0,R+T-t)}  (v_{L'}-v_{L})}_{H^{1/4}} \lesssim \exp(CM^3) H_{L,L'}.
\end{equation}
Consequently
\begin{align*}
	&\norm{\chi_1 (u_L - u_{L'})(t)}_{H^{-2\varepsilon}(\rho)}\\
\lesssim\; & \norm{\chi_2 [(u_{0,L}, u_{0,L}') - (u_{0,L'}, u_{0,L'}')]}_{\dataspace^{-\varepsilon}(\rho)}
    + \exp(CM^3) H_{L,L'},
\end{align*}
\end{lemma}
\begin{proof}
The second claim will follow from the first and properties of the linear propagators
(Lemmas~\ref{thm:speed of sound} and~\ref{thm:linear operator bounds}).
We thus estimate $\I_{B(0,R+T-t)} (v_{L'}-v_{L})$.
We can repeat the computations from Lemma~\ref{thm:nonlinear speed of sound} to obtain 
\begin{equation}
\begin{split}
&\norm{\I_{B(0,R+T-t)}(v_{L}-v_{L'})}_{H^{1/4}}\\
\leq\; & \bignorm{ \int_{0}^t \I_{B(0,R+T-t)} \Cs_{t-s} [\wick{u_{L'}(s)^3} - \wick{u_L(s)^3}] \ds}_{H^{1/4}}\\
\leq\; &\norm{\I_{B(0,R+T-t)}}_{B^{1/4}_{4,\infty}}
    \int_0^t \norm{\Cs_{t-s} \I_{B(0,R+T-s)}[ \wick{u_{L'}(s)^3} - \wick{u_L(s)^3}]}_{H^{1-3\varepsilon}} \ds\\
\lesssim\; &\int_0^t \norm{\I_{B(0,R+T-s)} [\wick{u_{L'}(s)^3} - \wick{u_L(s)^3}]}_{H^{-2\varepsilon}} \ds.
\end{split}
\end{equation}

We then perform the same manipulations as in Lemma~\ref{thm:local invariance duhamel},
only replacing the Fourier cutoff $N$ by the period length $L$
and multiplying everything by $\I_{B(0,R+T-s)}$.
Thanks to Lemma~\ref{thm:characteristic function} and the bounded support,
we can measure $\wick{w^{i}_{L}}$ in a weighted norm, such as in
\begin{equation}
\begin{split}
&\norm{\wick{w_{L}^2(s)} (v_{L'}-v_L)(s) \I_{B(0,R+T-s)}}_{H^{-2\varepsilon}}\\
\lesssim\; &\norm{\I_{B(0,R+T-s)} \wick{w_{L}^2(s)}}_{B^{-\varepsilon}_{14,\infty}}
    \norm{\I_{B(0,R+T-s)} (v_{L'}-v_L)(s)}_{H^{1/4}}\\
\lesssim \; & (R+T)^{\alpha}
   \norm{\I_{B(0,R+T-s)}}_{B^{1/14}_{14,\infty}} \norm{\wick{w_{L}^2(s)}}_{\mathcal C^{-\varepsilon}(\rho)}\\
\qquad& {} \times \norm{\I_{B(0,R+T-s)} (v_{L'}-v_L)(s)}_{H^{1/4}}.
\end{split}
\end{equation}
In the end, we have bounded
\begin{equation}
\begin{split}
&\norm{\I_{B(0,R+T-t)} (v_{L'}-v_L)(t)}_{H^{1/4}}\\
\lesssim_{R,T}\; & M^3\left[ H_{L,L'} + \int_0^t \sum_{j=1}^2 \norm{\wick{w^j}}_{\mathcal C^{-2\varepsilon}(\rho)}
    \norm{\I_{B(0,R+T-s)} (v_{L'} - v_L)(s)}_{H^{1/4}} \ds \right]\!,
\end{split}
\end{equation}
and again Grönwall gives
\begin{equation}
\norm{\I_{B(0,R+T-t)}(v_{L'}-v_L(t))}_{H^{1/4}}
\lesssim M^3 \exp(CM^3) H_{L,L'}.
\qedhere
\end{equation}
\end{proof}

We can then show that the limit of the Cauchy sequence really is a solution in our sense.

\begin{lemma}[Limit is a solution]\label{thm:construction solution infinite}
Let $(u_{0},u'_{0})$ be distributed according to $\vec{\mu}$.
Then there exists almost surely a solution to \eqref{eq:nlw} on
$\mathbb{R}^2$ with initial data $(u_{0},u'_{0})$
in the sense of Definition~\ref{def:Solution Whole Space}.
Furthermore for every compact $D \subset \R^2$ we have
\[
\lim_{M \to \infty} \vec{\mu}(\norm{v}_{C([0,T];\, H^{1-\varepsilon}(D))} \geq M) = 0.
\] 
\end{lemma}
\begin{proof}
Let $u_{L}$ be the solutions contructed in Section~\ref{sec:periodic} with initial data $(u_{0,L},u'_{0,L})$.
By Lemma~\ref{thm:skorokhod} we may put $u_{0,L}$ and $w_{L}$ for all $L$ in the same probability space $\tilde{\mathbb{P}}$,
and assume that $(u_{0,L}, u'_{0,L}) \to (u_{0},u'_{0})$ in $\dataspace^{-\varepsilon}(\rho)$ and
$\wick{w^{i}_{L}} \to \wick{w^{i}}$ in $L^1([0,T];\, H^{-\varepsilon}(\rho))$ almost surely.
We first need to show that $v_L$ has almost surely a unique limit as $L \to \infty$.
By Lusin's theorem we can find $A_{\delta}$ such that
$\vec{\mu}_{L}(A_{\delta}) \geq 1 - \delta$ and
$F^{j}(\vec{u}_0) \coloneqq \wick{w^j}$ is continuous on $A_{\delta}$.
Let us temporarily fix $R > 0$ and define a set where $v_L = u_L - w_L$ satisfies a good bound:
\begin{equation}
\begin{gathered}
\mathcal{D}_{L,M,R} \coloneqq \left\{ \norm{v_{L}}_{C([0,T];\, H^{1-\varepsilon}(B(0,R+T)))} \leq M \right\}\cap A_{\delta},
\text{ and }\\
\mathcal{D}_{\infty,M,R} \coloneqq \limsup_{L \to \infty} \mathcal{D}_{L,M,R}.
\end{gathered}
\end{equation}
Recall that by Lemma~\ref{thm:growth infinite volume}
we have $\tilde{\Prob}(\mathcal{D}_{L,M,R})\geq 1 - M^{-q} - \delta$,
and by Fatou also $\tilde{\Prob}(\mathcal{D}_{\infty,M,R}) \geq 1 - M^{-q} - \delta$.
We also observe that any $v \in \mathcal{D}_{\infty,M,R}$ is the limit of
a (random) subsequence $v_{L_{n}}$ such that $\norm{v_{L_{n}}}_{C([0,T];\, H^{1-\varepsilon}(B(0,R)))}\leq M$.

By Lemma~\ref{thm:global stability} we have 
\begin{equation}
\norm{\I_{B(0,R+T-t)} (v_{L_{n}} - v)(t)}_{H^{1/4}(\R^2)}
    \lesssim \exp(CM^3) H_{L,\infty},
\end{equation}
and by assumption $H_{L, \infty} \to 0$ almost surely as $L \to \infty$.
This shows that $\I_{B(0,R+T-t)} v_{L_{n}}(t)$ is a Cauchy sequence
also in the space $C([0,T];\,H^{1/4}(\R^2))$.
Let us denote its limit by $v^{R}$.
We need that show that for $R'>R$ we have $v^{R}|_{B(0,R)}=v^{R'}|_{B(0,R)}$.

Indeed note that $v^{R'}$ is the limit of another random subsequence $v_{L'_n}$,
where $v_{L'_n}$ satisfies
\begin{equation}
\norm{ v_{L'_{n}}(t)}_{C([0,T];\, H^{1-\varepsilon}(B(0,R'+T)))} \leq M.
\end{equation}
This implies that also $\norm{v_{L'_{n}}(t)}_{C([0,T];\, H^{1-\varepsilon}(B(0,R)))}\leq M$,
so Lemma~\ref{thm:global stability} gives 
\begin{equation}
\norm{\I_{B(0,R+T-t)} (v_{L_{n}}-v_{L'_{n}})(t)}_{H^{1/4}(\R^2)} \leq \exp(CM^3) H_{L_n,L'_{n}}.
\end{equation}
Again the right-hand side goes to $0$ as $n \to \infty$, which implies the claim.
Thus we can set 
$v(x,t) \coloneqq v^{R}(x,t)$
if $|x| \leq R+T-t$, and this is uniquely defined.

\bigskip\noindent%
To show that $u$ satisfies Definition~\ref{def:Solution Whole Space},
we need to prove that the above holds for any spatial cutoff;
that is, that we can pass $R \to \infty$.

In the above, we already passed $n \to \infty$ to take the infinite-volume limit.
As we then take the union of $\mathcal D_{\infty,M,R}$ over all $M > 0$,
we get a set of probability~$1$.
We can then intersect over $R \in \N$.

\bigskip\noindent%
Finally, we still need to show that
\begin{equation}
v(t)=\int_{0}^{t} \Cs_{t-s} \wick{u(s)^3} \ds.
\end{equation}
Equivalently, we can show that
\begin{equation}
\begin{split}
&\lim_{n \to \infty} \norm{ \chi_{1} (v(t) - v_{L_{n}}(t))}_{H^{1-2\varepsilon}(\rho)}\\
\lesssim\; &\lim_{n \to \infty} 
    \int_0^t \norm{\Cs_{s-t} [\chi_{2} (\wick{u(s)^3} - \wick{u_{L_{n}}(s)^3})]}_{H^{1-2\varepsilon}(\rho)} \ds
\end{split}
\end{equation}
vanishes as $L_{n} \to \infty$.
Here we again used Lemma~\ref{thm:speed of sound} to move $\chi_2$ into the integral. 
By Lemma~\ref{thm:linear operator bounds} we are left with estimating
\begin{equation}
\lim_{n \to \infty} \bignorm{\sum_{j=0}^3 \left[
    \wick{w(s)^j} \chi_{2} v(s)^{3-j} - \wick{w_{L_{n}}(s)^j} \chi_{2} v_{L_{n}}(s)^{3-j}
\right]}_{L^1([0,T];\, H^{-\varepsilon}(\rho))}.
\end{equation}
By assumption $w_L$ converges in $L^1([0,T];\, \mathcal C^{-\varepsilon}(\rho))$,
and by the first part of this proof $\chi_2 v_{L_n} \to \chi_2 v$ in $L^\infty H^{1/4}(\R^2)$.
As $v_{L_n}$ is a bounded sequence in $L^\infty H^{1-\varepsilon}(\rho)$,
it follows that $\chi_2 v_{L_{n}} \to \chi_2 v$ also in $L^\infty H^{1-2\varepsilon}(\rho)$.
Therefore the product can be estimated with Theorem~\ref{thm:besov multiplication}
and taken to the $n \to \infty$ limit.
This shows that $u$ satisfies the mild formulation.
We can then conclude by taking union over $\delta > 0$.
\end{proof}

\begin{remark}\label{rem:global flow}
A small modification of the proof of Lemma~\ref{thm:global stability}
gives that the solution $u$ with initial data $(u_{0},u'_{0})$ sampled from $\vec{\mu}$
in the sense of Definition~\ref{def:Solution Whole Space} is unique.
We will from now on denote its flow as $\Phi_{t}(u_{0},u'_{0})=u(t)$. 
\end{remark}

Since we are interested in the invariance of a product measure,
we also need to show that the sequence of $\partial_t u_L$ converges to $\partial_t u$.
This can be bootstrapped from the mild solution formula
as in Lemma~\ref{thm:local invariance derivative}.

\begin{lemma}[Stability, white noise component]\label{thm:global stability derivative}
Assuming $u$ and $u_L$ as in Lemma~\ref{thm:global stability}, we have almost surely
\begin{align*}\label{eq:stability-derivative} 
\norm{\chi_1 \partial_{t}[u_{L}(t) - u(t)]}_{H^{-1-2\varepsilon}(\rho)}
&\lesssim_T \norm{\chi_2 [(u_{0}, u'_{0}) - (u_{L,0}, u'_{L,0})]}_{\dataspace^{-\varepsilon}(\rho)}\\
    &\qquad+ \norm{\chi_1 [\wick{u(t)^3} - \wick{u_L(t)^3}]}_{H^{-2\varepsilon}}\\
    &\qquad+ H_{L,\infty} M^3 \exp(CM^3).
\end{align*}
\end{lemma}
\begin{proof}
By passing to the mild formulation we have
\begin{equation}
\begin{split}
\chi_1 \frac{u(t+s) - u(t)}{s}
&= \chi_1 \frac{\Cc_{t+s} - \Cc_t}{s} [\chi_2 u_0]
    + \chi_1 \frac{\Cs_{t+s} - \Cs_t}{s} [\chi_2 u_0']\\
&\quad- \frac{\chi_1}{s} \int_{0}^{t} (\Cs_{t+s-r} - \Cs_{t-r}) [\chi_2 \wick{u(r)^3}] \diff r\\
&\quad- \frac{\chi_1}{s} \int_{t}^{t+s} \hspace{-1em} \Cs_{t+s-r} [\chi_2 \wick{u(r)^3}] \diff r.
\end{split}
\end{equation}
The first two terms give a bounded linear operator 
from $\dataspace^{-\varepsilon}(\rho)$ to $H^{-1-2\varepsilon}(\rho)$ as $s \to 0$
by Lemma~\ref{thm:linear operator bounds}.
Since $\wick{u(r)^3}$ is continuous in $r$
by Lemma~\ref{thm:construction solution infinite} and Corollary~\ref{thm:continuity phi4},
the last two terms converge to
\begin{equation}
\chi_1 \int_0^t \Cc_{t-r} \chi_2 \wick{u(r)^3} \diff r
    + \chi_1 \chi_2 \wick{u(t)^3}.
\end{equation}
The same computations can be done for $u_{L}$.
Reusing the proof of Lemma~\ref{thm:global stability}, we then find
\begin{equation}
\int_0^t \norm{\Cc_{t-r} \chi_2 \wick{u(r)^3} - \wick{u_L(r)^3}}_{H^{-1-2\varepsilon}(\rho)} \diff r
\lesssim H_{L,\infty} M^3 \exp(CM^3).
\qedhere
\end{equation}
\end{proof}

\subsection{Proof of invariance}\label{sec:globalization measure}

As is well known, the Borel $\sigma$-algebra of $\R^2$ can be generated by just closed balls.
We will show an analogous result for the Borel $\sigma$-algebra of $\dataspace^{-2\varepsilon}(\rho)$:
the $\sigma$-algebra is generated by restrictions of distributions to compact domains.

\begin{theorem}[$\sigma$-algebra from compact-domain functions]
\label{thm:globalization_generator}
Let $s, s' \in \R$,
and let $\mathcal A^s$ be the family of Borel sets where inclusion only depends
on restrictions to compact domains:
\[
\mathcal A^s \coloneqq \left\{ A \subset H^s(\rho) \text{ Borel} \colon
    \exists \text{ compact } D \text{ s.t. }
    f \in A \Longleftrightarrow f|_D \in A \quad\forall f \in H^s(\rho)
\right\}.
\]
That is, $\I_A(f) = g_A(f|_D)$ for some $g_A \colon \dataspace^s(D) \to \{0,1\}$.
Then
\begin{enumerate}
\item the Borel $\sigma$-algebra of $H^s(\rho)$ is a sub-$\sigma$-algebra of $\sigma(\mathcal A^s)$;
\item the Borel $\sigma$-algebra of $H^s(\rho) \times H^{s'}(\rho)$
    is a sub-$\sigma$-algebra of $\sigma(\mathcal A^s) \times \sigma(\mathcal A^{s'})$.
\end{enumerate}
\end{theorem}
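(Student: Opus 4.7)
The plan is to reduce statements about the full Besov norm to statements about truncations by smooth cutoffs supported on compact balls. Fix a family of cutoffs $\chi_n \in C_c^\infty(\R^2)$ satisfying $\chi_n = 1$ on $B(0,n)$, $\supp \chi_n \subset B(0,n+1)$, and with derivatives of every order uniformly bounded in $n$. For fixed $f$ and any $g \in H^s(\rho)$, the truncated quantity $\norm{\chi_n(g-f)}_{H^s(\rho)}$ depends on $g$ only through $g|_{\bar B(0,n+1)}$, so the level sets
\[
A_{n,R}(f) \coloneqq \{g \in H^s(\rho) : \norm{\chi_n(g-f)}_{H^s(\rho)} \leq R\}
\]
lie in $\mathcal A^s$, with compact domain $D = \bar B(0,n+1)$.

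To establish (1), I would show that $\chi_n g \to g$ in $H^s(\rho)$ for every $g \in H^s(\rho)$. Theorem~\ref{thm:besov multiplication} applied with $p_1 = 2$, $p_2 = \infty$, $\rho_1 = \rho$, $\rho_2 \equiv 1$, and auxiliary regularity $s_2 > |s|$ gives
\[
\norm{\chi_n g}_{H^s(\rho)} \lesssim \norm{\chi_n}_{B^{s_2}_{\infty,2}} \norm{g}_{H^s(\rho)},
\]
where the cutoff norm is uniform in $n$ thanks to the uniform smoothness of $\chi_n$. Combined with the trivial identity $\chi_n \phi = \phi$ for $\phi \in C_c^\infty$ with support in $B(0,n)$ and density of $C_c^\infty$ in $H^s(\rho)$ (Definition~\ref{def:besov}), a $3\varepsilon$-approximation yields the claimed convergence. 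Consequently $\norm{\chi_n(g-f)}_{H^s(\rho)} \to \norm{g-f}_{H^s(\rho)}$, and
\[
\bar B(f,R) = \bigcap_{k\geq 1} \bigcup_{n_0 \geq 1} \bigcap_{n \geq n_0} A_{n,R+1/k}(f) \in \sigma(\mathcal A^s).
\]

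For (2), $H^s(\rho) = B^s_{2,2}(\rho)$ is separable (as the completion of $C_c^\infty(\R^2)$ under a metric), so every open set is a countable union of open balls, and each open ball decomposes as $B(f,R) = \bigcup_m \bar B(f, R - 1/m)$; the Borel $\sigma$-algebra is therefore generated by closed balls, which lie in $\sigma(\mathcal A^s)$ by (1). For (3), the Borel $\sigma$-algebra of a product of two separable metric spaces equals the tensor product of the individual Borel $\sigma$-algebras, so (3) follows from (2) applied coordinatewise. The main technical point is the uniform multiplication bound and ensuing convergence: one must pick the cutoffs so their $C^k$-norms do not grow with $n$ and choose $s_2 > |s|$ so that the constraint $s_1 + s_2 > 0$ of Theorem~\ref{thm:besov multiplication} is satisfied; everything else is a bookkeeping exercise in countable set operations.
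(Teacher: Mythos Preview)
Your argument is correct and takes a genuinely different route from the paper. The paper expands the $H^s(\rho)$ norm via the convolution kernel $K_s$ of $(1-\Delta)^{s/2}$, partitions $\R^2$ into unit squares $(A_j)_{j\in\N}$, and writes
\[
\bar B(f,R) = \limsup_{N\to\infty}\Bigl\{g:\ \textstyle\sum_{\ell,m,n=1}^N \int_{A_\ell}\rho^2\bigl[\int_{A_m}K_s(f-g)\bigr]\bigl[\int_{A_n}K_s(f-g)\bigr]\,\mathrm dx \le R^2\Bigr\},
\]
so that each set in the $\limsup$ depends on $g$ only through its restriction to $\bigcup_{j\le N}\overline{A_m+A_n}$. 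Your approach instead multiplies $g-f$ by uniformly smooth cutoffs $\chi_n$, uses the multiplicative inequality (Theorem~\ref{thm:besov multiplication}) together with density of $C_c^\infty$ to obtain $\norm{\chi_n(g-f)}_{H^s(\rho)}\to\norm{g-f}_{H^s(\rho)}$, and then recovers the closed ball as a countable $\liminf$/intersection over the level sets $A_{n,R+1/k}(f)$.

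Your route has two advantages. First, it avoids the kernel representation entirely, which is a bit delicate when $s\ge 0$ (then $K_s$ is a genuine distribution, and pairing it against the distribution $f-g$ restricted to a square requires some care). Second, your limit identity is clean at the boundary $\norm{g-f}=R$: convergence of the truncated norms immediately gives both inclusions, whereas in the paper's $\limsup$ formulation the boundary case is not obviously handled because the truncated double integrals need not be monotone in $N$. The paper's approach, on the other hand, is more explicit about \emph{which} compact set each approximating set depends on and does not need the multiplicative machinery. One small remark: Theorem~\ref{thm:besov multiplication} as stated requires $s_1\neq 0$, so for $s=0$ you should note separately that $\norm{\chi_n g}_{L^2(\rho)}\le\norm{\chi_n}_{L^\infty}\norm{g}_{L^2(\rho)}$ gives the uniform bound directly.
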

\begin{proof}
By the definition of the product $\sigma$-algebra,
it suffices to show the first claim for any $s \in \R$.
To do that, it is sufficient to construct the closed ball $\bar B = \bar B(f, R)$
for arbitrary $f \in H^s$ and $R > 0$.
By density, we can even assume $f \in C^\infty_c(\R^2)$.
We can write
\begin{equation}
\begin{split}
\bar B
&= \left\{ g \in H^s(\rho) \colon
    \int_{\R^2} \rho(x)^2 \abs{ (1 - \Delta)^{s/2} (f-g) }^2(x) \dx \leq R^2
\right\}\\
&= \left\{ g \in H^s(\rho) \colon
    \int_{\R^2} \rho(x)^2 \abs{ \int_{\R^2} K_s(x-y) (f-g)(y) \dy }^2 \dx \leq R^2
\right\}\\
&= \limsup_{N \to \infty} \bigg\{ g \in H^s(\rho) \colon \hspace{-0.5em}\sum_{\ell, m, n = 1}^N
    \int_{A_\ell} \!\rho(x)^2
        \left[ \int \chi_m(y) K_s(x-y) (f-g)(y) \dy \right]\\
&\hspace{15em}
        \left[ \int \chi_n(y) K_s(x-y) (f-g)(y) \dy \right] \dx \leq R^2
\bigg\}.
\end{split}
\end{equation}
Here we denote by $K_s$ the convolution kernel of $(1-\Delta)^{s/2}$,
by $(A_j)_{j \in \N}$ some measurable partitioning of $\R^2$, e.g.\ by unit squares,
and by $(\chi_j)$ a smooth partition of unity such that $\supp \chi_j \subset A_j + B(0,1)$.
For finite $N$, the set thus depends on $f$ and $g$ only inside
the compact set $\cup_{j=1}^N \overline{A_j + B(0,1)}$.

Since taking a $\limsup$ is a closed operation within the $\sigma$-algebra,
this proves that closed balls can be constructed from sets in $\mathcal A^s$.
\end{proof}

We now repeat the argument of Theorem~\ref{thm:local invariance}.
Thanks to the finite speed of propagation, we can assume our Lipschitz test functions
to be local in $\R^2$.

\begin{lemma}[Reduction to bounded domains]\label{thm:global reduction}
Let $\mathcal F$ be the set of bounded Lipschitz functions
$\varphi \colon \dataspace^{-2\varepsilon}(\rho) \to \R$
that depend only on the restriction of argument to some compact domain:
for any $\varphi \in \mathcal F$, there exists a compact $D \subset \R^2$
such that $\varphi(f) = \varphi(f|_D)$ for all $f \in \dataspace^{-2\varepsilon}(\rho)$.

Let $\mu_1$ and $\mu_2$ be Borel probability measures on $\dataspace^{-2\varepsilon}(\rho)$.
If
\[
\int \varphi(f) \diff\mu_1(f)
= \int \varphi(f) \diff\mu_2(f)
\]
for all $\varphi \in \mathcal F$, then $\mu_1 = \mu_2$.
\end{lemma}
\begin{proof}
We repeat the argument of Lemma~\ref{thm:test functions}.
Fix two distinct points $(f, f')$ and $(g, g')$ in $\dataspace^{-2\varepsilon}(\rho)$.
There again exist $\alpha, \beta \in C_c^\infty(\R^2)$
such that $\dual{\alpha, f - g} \neq 0$ or $\dual{\beta, f' - g'} \neq 0$;
note that these functions are compactly supported.
Then
\begin{equation}
\eta_1(f, f') \coloneqq \arctan(\dual{\alpha, f}), \quad
\eta_2(f, f') \coloneqq \arctan(\dual{\beta, f'})
\end{equation}
are bounded,
depend on their arguments only on $\supp \alpha \cup \supp \beta$,
and are Lipschitz continuous over the weighted spaces since
\begin{equation}
\abs{\arctan(\dual{\beta, f'}) - \arctan(\dual{\beta, g'})}
\lesssim \abs{\dual{\beta, f' - g'}}
\lesssim \norm{\beta}_{H^2(\R^2)} \norm{f' - g'}_{H^{-1-2\varepsilon}(\rho)}
\end{equation}
and similarly for $\eta_1$ in $H^{-2\varepsilon}(\rho)$.
\end{proof}

\begin{theorem}[Global invariance]\label{thm:global invariance}
We have $\vec{\mu} \circ \flow_t\inv = \vec{\mu}$ for all $0 \leq t \leq T$.
\end{theorem}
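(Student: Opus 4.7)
The plan is to reduce via Corollary \ref{thm:global reduction} to testing invariance against bounded-domain Lipschitz functionals, and then pass to the limit in the periodic invariance of Theorem \ref{thm:local invariance} using the stability results of Section \ref{sec:globalization pde}. By periodic invariance one has $\mu_L \circ \flow_{L,t} = \mu_L$, and weak convergence $\mu_L \to \mu$ immediately gives $\mu_L \circ \flow_{L,t} \to \mu$ weakly; if one can identify the weak limit alternatively as $\mu \circ \flow_t$, uniqueness of weak limits forces $\mu \circ \flow_t = \mu$.

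By Corollary \ref{thm:global reduction} this identification amounts to checking
\begin{equation*}
\lim_{L \to \infty} \int \varphi(\flow_{L,t} u_0)\, \mathrm d\mu_L(u_0) = \int \varphi(\flow_t u_0)\, \mathrm d\mu(u_0)
\end{equation*}
for every bounded Lipschitz $\varphi$ depending only on the restriction of its argument to some compact $D \subset \R^2$. I would invoke Skorokhod's theorem (Lemma \ref{thm:skorokhod}) to place $u_{0,L} \sim \mu_L$ and $u_0 \sim \mu$ on a common probability space $\tilde \Prob$ with $u_{0,L} \to u_0$ almost surely in $\dataspace^{-2\varepsilon}(\rho)$. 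Boundedness of $\varphi$ then reduces the claim, via dominated convergence, to showing $\varphi(\flow_{L,t} u_{0,L}) \to \varphi(\flow_t u_0)$ in $\tilde \Prob$-probability.

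Choose a smooth cutoff $\chi$ of $D + B(0, t+1)$; finite speed of propagation (Lemma \ref{thm:speed of sound}) makes $\varphi(u)$ depend only on $\chi u$, so continuity of $\varphi$ reduces the problem further to
\begin{equation*}
\norm{\chi \cdot (\flow_{L,t} u_{0,L} - \flow_t u_0)}_{\dataspace^{-2\varepsilon}(\rho)} \longrightarrow 0 \quad \text{in probability.}
\end{equation*}
Fixing $k, \ell \in \N$, the high-probability set $\mathcal E_k$ of Theorem \ref{thm:global limit data} carries a uniform-in-$L$ moment bound $M_k$ on the linear parts, on which Lemma \ref{thm:global stability} provides
\begin{equation*}
\norm{\chi \cdot (\flow_{L,t} u_{0,L} - \flow_t u_0)}_{H^{-2\varepsilon}(\rho)} \lesssim \norm{\chi \cdot (u_{0,L} - u_0)}_{\dataspace^{-2\varepsilon}(\rho)} + \exp(C M_k^3)\, H_L,
\end{equation*}
and Lemma \ref{thm:global stability derivative} then supplies the matching bound for the time-derivative component with a further error of $2^{-\ell}$ on a set of probability at least $1 - 2^{-k}$. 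The Skorokhod coupling forces the first term on the right to vanish almost surely, while $H_L \to 0$ in probability by Lemma \ref{thm:linear convergence to full space}. Sending $L \to \infty$ and then $k, \ell \to \infty$ closes the argument.

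The main obstacle is harmonizing the several probabilistic cutoffs: the stability bounds of Section \ref{sec:globalization pde} operate only on the Skorokhod-coupled version of $\mathcal E_k$; the Wick-power convergence of Lemma \ref{thm:linear convergence to full space} is only in probability; and the time-derivative estimate of Lemma \ref{thm:global stability derivative} carries its own large-deviation cutoff. Folding these into a single event of arbitrarily high $\tilde \Prob$-probability is where the delicate bookkeeping sits; once that is arranged, boundedness of $\varphi$ and dominated convergence complete the proof.
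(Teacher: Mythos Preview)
Your proposal is correct and follows essentially the same route as the paper: reduce via Corollary~\ref{thm:global reduction} to bounded-domain Lipschitz test functions, couple via Skorokhod, use finite speed of propagation to replace $\flow_t$ by $\flow_{L,t}$ inside the light cone, and then invoke Theorem~\ref{thm:global limit data} together with Lemmas~\ref{thm:global stability} and \ref{thm:global stability derivative} to control the remaining difference. The paper packages the ``delicate bookkeeping'' you flag by defining a single event $\mathcal F_{L,M}$ (intersection of the moment-bound set, the event $\{H_L \leq 2^{-\ell}\}$, and the event from Lemma~\ref{thm:global stability derivative}) of probability at least $1 - C\,2^{-k}$, and then splits the expectation into this set and its complement using $\|f\|_\infty$; your dominated-convergence phrasing amounts to the same thing.
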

\begin{proof}
We know \emph{a priori} that the pushforward measure $\vec{\mu} \circ \flow_t\inv$ exists
as $\flow_t$ is a measurable map.
(It is well-defined by Remark~\ref{rem:global flow}.
By Theorem~\ref{thm:globalization_generator}, we only need to check restrictions to bounded domains.
There $\flow_t$ is almost surely defined as composition of small-time periodic flows.)

By the weak limit and finite-volume invariance, we also have that
for all bounded and continuous $f \colon \dataspace^{-2\varepsilon}(\rho) \to \R$,
\begin{equation}
\int_{\dataspace} \!f(\vec{u}_0) \diff\vec{\mu}
= \lim_{L \to \infty} \int_{\dataspace}
    f(\flow_{L,t} \vec{u}_{L,0}) \diff\vec{\mu}_L.
\end{equation}
Recall that the weak limit is unique along a fixed subsequence $L \to \infty$.
To show $\vec{\mu} = \vec{\mu} \circ \flow_t\inv$, we then only need to show that
\begin{equation}
\lim_{L \to \infty} \int_{\dataspace}
    f(\flow_{L,t} \vec{u}_{L,0}) \diff\vec{\mu}_L
= \int_{\dataspace} f(\flow_t \vec{u}_0) \diff\vec{\mu}.
\end{equation}
Lemma~\ref{thm:global reduction}
lets us assume that $f$ is Lipschitz in $\dataspace^{-2\varepsilon}$
and depends on the restriction of its arguments to some $B(0,R)$.
We can further pass to a common probability space by Lemma~\ref{thm:skorokhod}.

Let $\mathcal G$ be the set on which
all of the following hold:
\begin{equation}
\begin{gathered}
\norm{v_{L}}_{C([0,T];\, H^{1-\varepsilon}(B(0,R+T)))}\leq M,
\quad \norm{v}_{C([0,T];\, H^{1-\varepsilon}(B(0,R+T)))}\leq M,\\
\sum_{j=1}^{3} \norm{\wick{w^j}}_{C([0,T];\, H^{-\varepsilon}(\rho))}
+ \norm{\wick{w_{L}^j}}_{C([0,T];\, H^{-\varepsilon})(\rho)} \leq M.
\end{gathered}
\end{equation}
We suppress the dependency on $L$ and $M$ in the notation for simplicity.
For any $k \in \N$, we can choose $M$ such that $\Prob(\mathcal G) \geq 1 - 2^{-k}$
for all $L$ (sufficiently large).
It is essential that $M$ only depends on $R,T$ and not $L$.
We can then estimate
\begin{equation}
\begin{split}
&\lim_{L \to \infty} \tilde\E\, \abs{
    f(\flow_{L,t} \vec{u}_{L,0})
    - f(\flow_t \vec{u}_0) }\\
\leq\; &\lim_{L \to \infty} \tilde\E\, \abs{
    \I_{\mathcal G} [f(\flow_{L,t} \vec{u}_{L,0})
    - f(\flow_t \vec{u}_0)] } + 2^{-k} \norm{f}_\infty\\
\leq\; &\lim_{L \to \infty} \text{Lip}_f \, \tilde \E\, (\I_{\mathcal G} \norm{ \chi_1 [
        \flow_{L,t} \vec{u}_{L,0} - \flow_{t} \vec{u}_0 ]
    }_{\dataspace^{-2\varepsilon}(\rho)} \wedge 1)
    + 2^{-k} \norm{f}_\infty.
\end{split}
\end{equation}
Here we used respectively the boundedness and Lipschitz continuity of $f$.
Note that the spatial cutoff $\chi_1$ depends on $f$ through $R$.

The two components of $\dataspace^{-2\varepsilon}(\rho)$ are estimated with
Lemmas~\ref{thm:global stability} and~\ref{thm:global stability derivative},
leading to the upper bound
\begin{equation}
\begin{split}
\tilde\E
&\left( \norm{\chi_2 (\vec{u}_{L,0} - \vec{u}_0)}_{\dataspace^{-2\varepsilon}(\rho)} \wedge 1 \right)
    + \tilde\E \left( \norm{\chi_1 [\wick{u(t)^3} - \wick{u_L(t)^3}]}_{H^{-2\varepsilon}(\rho)} \wedge 1 \right)\\
&\quad + \exp(CM^3) \tilde\E [H_{L,\infty} \wedge 1].
\end{split}
\end{equation}
The initial data converges almost surely as $L \to \infty$
and dominated convergence allows us to commute limit and expectation.
Hence the first two terms vanish in the limit.
The same holds for the third term as $L \to \infty$ with $M$ still fixed.
We then pass $k \to \infty$ (and hence $M \to \infty$) to get the claim.
\end{proof}

We can finally post-process this result to obtain that the solution
is almost surely in $C([0,\infty);\, H^{-\varepsilon}(\rho))$
instead of only the bounded time interval $[0,T]$.
This finishes the proof of Theorem~\ref{thm:global moment bounds}.

\begin{lemma}\label{thm:global post-process}
Let $u$ be the solution constructed in Lemma~\ref{thm:construction solution infinite}.
Then $\vec{\mu}$-almost surely $u$ survives for infinite time and
\[
u \in C([0,\infty);\, H^{-\varepsilon}(\rho)).
\]
\end{lemma}
\begin{proof}
Since $\vec{\mu}$ is invariant under $\flow_{t}$ we have 
\begin{equation}
\E_{\vec{\mu}} \left[\norm{\wick{u^3}}^{p}_{L^{p}([0,T];\, H^{-\varepsilon}(\rho))} \right]
= T \, \E_{\vec{\mu}} \norm{\wick{u_{0}^{3}}}_{H^{-\varepsilon}(\rho)}^p
\end{equation}
for any $T > 0$.
From this and Lemma~\ref{thm:phi42 wick moments} we deduce 
\begin{equation}
\vec{\mu}\left( \norm{\wick{u^3}}^{p}_{L^{p}([0,T];\, H^{-\varepsilon})(\rho)} \geq T^{3} \right) \lesssim T^{-2}.
\end{equation} 
Thus by Borel--Cantelli there exists $\vec{\mu}$-almost surely $T^{\ast} > 0$ such that 
\begin{equation}
\norm{\wick{u^3}}^{p}_{L^{p}([0,T];\, H^{-\varepsilon})(\rho)} \leq C_p T^{3}
\end{equation}
for every $T>T^{\ast}$. 
This also implies that 
\begin{equation}
\norm{\wick{u^3}}^{p}_{L^{p}([0,t];\, H^{-\varepsilon}(\rho))} \leq C_p (t+T^{\ast})^{3}.
\end{equation}
Thus from the mild solution formula, Minkowski's integral inequality,
and the $t$-dependent bound for $\Cs_{t}$ in Lemma~\ref{thm:linear operator bounds},
we obtain that
\begin{equation}
\begin{split}
\norm{v(t)}_{H^{-\varepsilon}(\rho)}
&\leq \bignorm{\int_0^t \Cs_{t-s} \wick{u^3(s)} \ds }_{H^{1-2\varepsilon}(\rho)}\\
&\lesssim (1 + t)^{1+\alpha} \norm{\wick{u^{3}}}_{L^1([0,t];\, H^{-\varepsilon}(\rho))}\\
&\lesssim (t+T^{\ast})^{4+\alpha},
\end{split}
\end{equation}
where $\alpha$ is the parameter of $\rho$.
Therefore $v$ is continuous in $t$ as an integral of an $L^{p}$ function.
Finally, we observe that $w_{t}$ is continuous
since $\Cs_{t}$ and $\Cc_{t}$ are continuous in $t$,
and that uniqueness follows from finite-time uniqueness.
\end{proof}

\section{Weak invariance of NLS}\label{sec:schrodinger}
Let us then turn to proving Theorem~\ref{thm:nls main result}.
We begin by considering the nonlinear Schrödinger equation
\begin{equation}\label{eq:nls periodic}
\begin{aligncases}
i \partial_t u_L + \Delta u_L &= \wick{u_{L} \abs{u_L}^2},\\
\Law(u_L (0)) &= \phi_{2, L}^4,
\end{aligncases}
\end{equation}
on $\Lambda_L \times \R$.
Invariance of the periodic complex $\phi^4_2$ measure under this equation was shown
already by Bourgain \cite{bourgain_invariant_1996};
see also \cite{oh_pedestrian_2018} that expands the result in a pedagogic way.
The notions of solution and invariance are both weaker than in the wave case,
as explained in the latter reference.

\begin{remark}
Our construction of the complex $\phi^4_2$ measures and renormalized objects
in Section~\ref{sec:stochastic} uses the massive Gaussian free field,
but no mass term appears in~\eqref{eq:nls periodic}.
This is not an issue as the $L^2$ norm is conserved under the nonlinear Schrödinger flow;
see the discussion around~\cite[Eq.~(1.8)]{oh_pedestrian_2018}.
\end{remark}

\begin{theorem}[Solution in periodic space, {{\cite[Theorem~1.4]{oh_pedestrian_2018}}}]
Equation \eqref{eq:nls periodic} has almost surely a weak solution
in $C(\R_+;\, H^{-\varepsilon}(\Lambda_L))$ for any $L > 0$ and $\varepsilon > 0$.
The law of $u_L(t)$ is the complex $\phi_{2,L}^4$ measure for all $t \geq 0$.
\end{theorem}

Our preceding extension argument is broken for two reasons.
The linear propagator
\begin{equation}
\Ct_t u
\coloneqq \exp(it\Delta) u
\coloneqq \Fou\inv \left[ \exp(-it|\xi|^2) \hat u(\xi) \right]
\end{equation}
does not increase the regularity of its argument.
Therefore the mild solution
\begin{equation}\label{eq:nls mild}
u_L(t) = \Ct_t u_L(0) + \int_0^t [\Ct_{t-s} \wick{u_L(s) \abs{u_L(s)}^2}](x) \ds
\end{equation}
is not amenable to the fixpoint argument of Section~\ref{sec:periodic} in a Besov space.
Moreover, NLS does not possess finite speed of propagation:
wave packets propagate at a speed proportional to their frequency squared.
This means that the argument in Section~\ref{sec:globalization} is not applicable either.

However, if we can accept some loss of regularity,
we can still use the previous tightness argument.
That allows us to approximate full-space solutions by (a subsequence of) periodic solutions.
This sense of invariance was introduced by Albeverio and Cruzeiro \cite{albeverio_global_1990}
in the context of Navier--Stokes equations.

Compactness is given by a version of the usual embedding theorem for Hölder-continuous functions:

\begin{lemma}[Compact embedding II]\label{thm:arzela-ascoli}
For any $0 < \alpha < 1$, the Hölder space $C^{\alpha} ([0, T];\, H^s(\rho))$ is defined by the norm
\[
\norm{f}_{C^{\alpha} ([0, T];\, H^s(\rho))} \coloneqq
\norm{f}_{L^\infty_t H^s(\rho)}
+ \sup_{0 \leq s \neq t \leq T} \frac{\norm{f(t) - f(s)}_{H^s(\rho)}}{\abs{t-s}^\alpha}.
\]
Then the embedding
\[
C^{2\varepsilon} ([0, T];\, H^{s} (\rho))
\hookrightarrow C^{\varepsilon } ([0,T];\, H^{s-\varepsilon} (\rho^{1 + \varepsilon}))
\]
is compact.
\end{lemma}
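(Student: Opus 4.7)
The plan is a standard Arzelà--Ascoli argument followed by an interpolation step to upgrade the mode of convergence from continuous to Hölder. Fix a sequence $(f_n)$ in $C^{2\varepsilon}([0,T]; H^s(\rho))$ bounded by some constant $K$; we must extract a subsequence convergent in $C^{\varepsilon}([0,T]; H^{s-\varepsilon}(\rho^{1+\varepsilon}))$.

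First I would produce a subsequence convergent in the weaker space $C([0,T]; H^{s-\varepsilon}(\rho^{1+\varepsilon}))$. Theorem~\ref{thm:besov compactness} gives the compact embedding $H^s(\rho) \hookrightarrow H^{s-\varepsilon}(\rho^{1+\varepsilon})$, provided the weight parameter of $\rho$ is large enough (which we always assume). Therefore $\{f_n(t)\}_{n \geq 1}$ is precompact in $H^{s-\varepsilon}(\rho^{1+\varepsilon})$ for each $t$. Equicontinuity as $H^{s-\varepsilon}(\rho^{1+\varepsilon})$-valued maps is immediate from
\[
\norm{f_n(t) - f_n(\sigma)}_{H^{s-\varepsilon}(\rho^{1+\varepsilon})}
\lesssim \norm{f_n(t) - f_n(\sigma)}_{H^s(\rho)}
\leq K\abs{t-\sigma}^{2\varepsilon}.
\]
The Banach-space-valued Arzelà--Ascoli theorem then yields a subsequence $(f_{n_k})$ converging uniformly to some $f$. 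Set $g_k \coloneqq f_{n_k} - f$; then $\sup_t \norm{g_k(t)}_{H^{s-\varepsilon}(\rho^{1+\varepsilon})} \to 0$.

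Only the Hölder seminorm of $g_k$ remains to be controlled. I would invoke Theorem~\ref{thm:besov interpolation} with $\theta = 1/2$ to interpolate between the uniform $C^{2\varepsilon}(H^s(\rho))$ bound (which is too strong in space but has the right Hölder exponent in time) and the already-established uniform smallness in a weaker space:
\[
\norm{g_k(t) - g_k(\sigma)}_{H^{s-\varepsilon}(\rho^{1+\varepsilon})}
\leq \norm{g_k(t) - g_k(\sigma)}_{H^s(\rho)}^{1/2}\;
    \norm{g_k(t) - g_k(\sigma)}_{H^{s-2\varepsilon}(\rho^{1+2\varepsilon})}^{1/2}.
\]
Here the weight exponents are forced by the arithmetic identity $1+\varepsilon = \tfrac{1}{2}\cdot 1 + \tfrac{1}{2}(1+2\varepsilon)$ and the regularity exponent by $s - \varepsilon = \tfrac{1}{2}s + \tfrac{1}{2}(s - 2\varepsilon)$. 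The first factor is bounded by $(2K)^{1/2}\abs{t-\sigma}^{\varepsilon}$, producing exactly the desired Hölder scaling; the second factor is majorized by $\bigl(2\sup_t \norm{g_k(t)}_{H^{s-2\varepsilon}(\rho^{1+2\varepsilon})}\bigr)^{1/2}$, which tends to zero since $H^{s-\varepsilon}(\rho^{1+\varepsilon}) \hookrightarrow H^{s-2\varepsilon}(\rho^{1+2\varepsilon})$ (lower regularity and faster-decaying weight). Dividing by $\abs{t-\sigma}^\varepsilon$ therefore yields a bound on the Hölder seminorm of $g_k$ which is uniform in $t,\sigma$ and vanishes as $k \to \infty$.

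There is no serious obstacle here; the only thing to be careful about is that the weight exponent of $\rho$ must be chosen large enough so that Theorem~\ref{thm:besov compactness} applies with both of the weight ratios that appear (namely $\rho \to \rho^{1+\varepsilon}$ and the implicit one in the interpolation identity). Since the paper already allows us to enlarge $\alpha$ finitely many times, this causes no issue.
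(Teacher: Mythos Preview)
The paper states this lemma without proof, treating it as a standard Arzelà--Ascoli-type fact; there is nothing to compare against. Your argument is correct and is the expected one: pointwise precompactness from Theorem~\ref{thm:besov compactness}, equicontinuity from the Hölder bound, then the interpolation identity from Theorem~\ref{thm:besov interpolation} with $\theta=1/2$ to upgrade $C^0$ convergence to $C^\varepsilon$ convergence.

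Two small remarks. First, the paper's displayed ``norm'' is only a seminorm, so boundedness in it alone does not give pointwise boundedness of $\{f_n(t)\}$ in $H^s(\rho)$; in practice one tacitly adds $\sup_t\norm{f(t)}_{H^s(\rho)}$ (as is done in the application, Theorem~\ref{thm:nls tightness}, where stationarity supplies the pointwise bound). Your Arzelà--Ascoli step implicitly uses this. Second, when you write $\norm{g_k(t)-g_k(\sigma)}_{H^s(\rho)}\leq 2K\abs{t-\sigma}^{2\varepsilon}$ you are using that the limit $f$ itself lies in $C^{2\varepsilon}([0,T];H^s(\rho))$ with seminorm $\leq K$; this follows by weak lower semicontinuity of the $H^s(\rho)$ norm along the convergent subsequence, but it is worth saying explicitly.
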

\begin{proof}
This is an application of the Arzelà--Ascoli theorem.
Fix $R > 0$ and let $B$ be the ball $B(0,R)$ in $C^{2\varepsilon}([0,T];\, H^{s} (\rho))$.
By \cite[X.\S2.5, Corollary~1]{bourbaki_general_1966},
it suffices to verify two conditions.

First, $B$ must be equicontinuous from $[0,T]$ to $H^{s-\varepsilon} (\rho^{1 + \varepsilon})$.
By construction we have for all $f \in B$ and $0 \leq s < t \leq T$ the bound
\begin{equation}
\norm{f(t) - f(s)}_{H^{s-\varepsilon} (\rho^{1 + \varepsilon})}
\leq R \abs{t-s}^{2\varepsilon},
\end{equation}
so this condition holds.

Second, for any fixed $t \in [0,T]$ the point evaluations $B[t] \coloneqq \{ f(t) \colon f \in B \}$
must have compact closure in $H^{s-\varepsilon} (\rho^{1 + \varepsilon})$.
This is true by Theorem~\ref{thm:besov compactness} since $B[t]$ is bounded in $H^s(\rho)$.

This implies that any sequence $u_n$ in $B$ has a subsequence
that converges in $C([0,T];\, H^{s-\varepsilon} (\rho^{1 + \varepsilon}))$.
We can upgrade the convergence to $C^\varepsilon$ in time, since for any $v = u_n - u_m$
with $n$, $m$ in the subsequence we have
\begin{equation}
\frac{\norm{v(t) - v(s)}_{H^{s-\varepsilon}(\rho^{1+\varepsilon})}}{\abs{t-s}^\varepsilon}
\leq \sqrt{\norm{v}_{L^\infty_t H^{s-\varepsilon}(\rho^{1+\varepsilon})}}
    \sqrt{\frac{\norm{v(t) - v(s)}_{H^s(\rho)}}{\abs{t-s}^{2\varepsilon}}},
\end{equation}
where the second term is again bounded in $B$.
Hence the subsequence $u_n$ is Cauchy
in $C^{\varepsilon} ([0,T];\, H^{s-\varepsilon} (\rho^{1 + \varepsilon}))$.
\end{proof}

We first collect a lemma needed for the tightness proof.
The linear propagator $\Ct_t$ is an isometry over an unweighted Besov space $H^s(\R^2)$.
This is not the case in a weighted space.
By giving up some differentiability, we can still get a bound that depends on time.

Let us emphasize that we have not tried to find optimal bounds.
A strong invariance result would require no loss of differentiability at all
(possibly assuming a sufficiently short time interval).

\begin{lemma}[Weighted estimate]\label{thm:approx finite speed}
Fix $1 \leq p, q \leq \infty$, and
let us assume that the weight $\rho$ over $\R^2$
has form $\rho(x) = (1 + \abs x^2)^{-\alpha}$ for $\alpha \in \N$.
The Schrödinger propagator $\Ct_t$ then satisfies for all $s \in \R$ the estimate
\[
\norm{\Ct_t f}_{B^{s}_{p,q}(\rho)} \lesssim (1 + t^{\alpha+2}) \norm{f}_{B^{s+\alpha+2}_{p,q}(\rho)}.
\]
\end{lemma}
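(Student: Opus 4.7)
The plan is to reduce the estimate to a per-frequency-block convolution inequality via Littlewood--Paley, and then control the frequency-localized Schrödinger kernel by a scaling argument. Since $\Ct_t = e^{it\Delta}$ commutes with every Fourier multiplier, $\Delta_k \Ct_t f = \Ct_t \Delta_k f$, and hence
\[
\norm{\Ct_t f}_{B^s_{p,q}(\rho)}^q = \sum_{k \geq -1} 2^{ksq} \norm{\rho \, \Ct_t \Delta_k f}_{L^p}^q .
\]
It therefore suffices to prove a per-block estimate of the form $\norm{\rho \, \Ct_t \Delta_k f}_{L^p} \lesssim 2^{kd} \norm{\rho \Delta_k f}_{L^p}$ uniformly in $k$, where the implicit constant may depend on $t$ and $\alpha$.

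For a fixed block, I would write the Schrödinger action as a convolution $\Ct_t \Delta_k f = K_{k,t} \ast \Delta_k f$, with $K_{k,t} = \Fou^{-1}[e^{-it|\xi|^2}\chi_k]$ and $\chi_k$ a smooth bump equal to $1$ on the Fourier support of $\Delta_k$. Peetre's inequality $\rho(x) \lesssim \inorm{x-y}^{\abs\alpha} \rho(y)$ then lets the weight be moved inside the convolution:
\[
\rho(x) \abs{(\Ct_t \Delta_k f)(x)} \lesssim \bigl( \inorm{\cdot}^{\abs\alpha} \abs{K_{k,t}} \bigr) \ast \bigl( \rho \abs{\Delta_k f} \bigr)(x).
\]
Young's inequality then gives
$\norm{\rho \, \Ct_t \Delta_k f}_{L^p} \lesssim \norm{\inorm{\cdot}^{\abs\alpha} K_{k,t}}_{L^1} \norm{\rho \Delta_k f}_{L^p}$, so everything reduces to a weighted-$L^1$ estimate on the frequency-localized kernel.

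The remaining task is to show that $\norm{\inorm{\cdot}^{\abs\alpha} K_{k,t}}_{L^1}$ grows no faster than $2^{kd}$ (uniformly for $\abs t \leq T$). The scaling identity $K_{k,t}(x) = 2^{kd} K_{0,\, 2^{2k}t}(2^k x)$ reduces the problem to the unit-frequency kernel $K_{0,s}(y) = \int e^{iy\cdot\xi - is|\xi|^2} \chi_0(\xi) \,\mathrm d\xi$, which is Schwartz in $y$ for each $s$. Standard stationary-phase analysis shows that $K_{0,s}$ is essentially supported in the ``light cone'' $\abs y \lesssim 1 + \abs s$, with $L^\infty$ amplitude $\lesssim \min(1, \abs s^{-d/2})$ and rapid decay outside. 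Integrating $\inorm y^{\abs\alpha}$ against these bounds, changing variable to $y = 2^k x$, and substituting $s = 2^{2k} t$ should deliver the required bound once all factors of the form $(1+\abs t)^c$ are absorbed into a constant depending on $T$ and $\abs\alpha$.

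The hard part will be precisely this last bookkeeping: one must balance the polynomial growth $\inorm y^{\abs\alpha}$ of the transferred weight against both the dispersive amplitude decay of $K_{0,s}$ and its expanding light-cone support of radius $\sim 1 + \abs s$, in such a way that after the substitution $s = 2^{2k}t$ only the scaling factor $2^{kd}$ remains as an unbounded function of $k$. Once this is done, summation in $\ell^q$ over $k$ closes the argument.
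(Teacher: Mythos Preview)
Your approach has a genuine gap in the final bookkeeping step. After Peetre and scaling you need $\norm{\inorm\cdot^{|\alpha|} K_{k,t}}_{L^1} \lesssim 2^{kd}$, equivalently $\int \inorm{2^{-k} y}^{|\alpha|} |K_{0,s}(y)|\,\mathrm dy \lesssim 2^{kd}$ with $s = 2^{2k}t$. The stationary-phase picture you describe is correct: $K_{0,s}$ concentrates on the annulus $|y| \sim |s|$ with amplitude $\sim |s|^{-d/2}$. But on that region $\inorm{2^{-k} y}^{|\alpha|} \sim (2^{-k}|s|)^{|\alpha|} = (2^{k}|t|)^{|\alpha|}$, while the $L^1$ mass of $K_{0,s}$ there is $\sim |s|^{d/2} = 2^{kd}|t|^{d/2}$. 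The weighted integral is therefore $\sim 2^{k(|\alpha|+d)}$, not $2^{kd}$: the dispersive amplitude decay does not compensate the polynomial weight on the expanding support, and you lose an extra $|\alpha|$ derivatives. This is exactly the infinite-speed-of-propagation obstruction, and Peetre-transferring the full weight onto the kernel is too crude to see past it.

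The paper avoids this by never placing a growing weight on the kernel. It first treats the case $\alpha > d$ (so $\rho$ grows and $\rho^{-1}$ is integrable) and uses a weighted Young inequality with the \emph{decaying} weight $\rho^{-1}$ on the kernel factor, obtaining $\norm{K_k \ast \Delta_k f}_{L^p(\rho)} \leq \norm{K_k}_{L^1(\rho^{-1})}\,\norm{\Delta_k f}_{L^p(\rho)}$. The point is then simply $\norm{K_k}_{L^1(\rho^{-1})} \leq \norm{1}_{L^1(\rho^{-1})}\,\norm{K_k}_{L^\infty}$: the first factor is finite by the hypothesis $|\alpha|>d$, and the second is $\lesssim 2^{kd}$ trivially from the volume of the Fourier support, requiring no spatial decay of $K_k$ whatsoever. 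The case $\alpha < -d$ is recovered by duality via the adjoint $\Ct_{-t}$. Thus the integrability of the weight substitutes for kernel localisation, which is precisely the resource your Peetre route cannot access.
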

\begin{proof}
We will estimate the $L^p$ norm inside
\[
\norm{\Ct_t f}_{B^{s}_{p,q}(\rho)}
= \norm{ 2^{ks} \norm{\Delta_k \Ct_t f}_{L^p(\rho)} }_{\ell^q_k}.
\]
Let us first assume $k \geq 0$.
We can write $\Delta_k \Ct_t = \Delta_k \Delta_k' \Ct_t$
where $\Delta_k'$ is a smooth indicator of a larger annulus,
given by multiplier symbol $\varphi(2^{-k} \,\cdot\,)$.
Let $K_k$ be the convolution kernel of $\Delta_k' \Ct_t$;
by weighted Young's inequality \cite[Theorem~2.1]{mourrat_global_2017} we then have
\begin{equation}
\norm{K_k \ast (\Delta_k f)}_{L^p(\rho)}
\leq \norm{K_k}_{L^1(\rho\inv)} \norm{\Delta_k f}_{L^p(\rho)}.
\end{equation}
As in Theorem~\ref{thm:besov sobolev}, we can write the $L^1$ norm as
\begin{equation}\label{eq:approx finite speed l1}
\int_{\R^2} (1 + \abs x^2)^{-2} (1 + \abs x^2)^{\alpha + 2}
    \abs{\int_{\R^2} e^{ix \cdot \xi} \varphi(2^{-k} \xi) e^{-it \abs\xi^2} \diff\xi} \dx.
\end{equation}
Since we assumed $\alpha$ to be integer,
it is a direct computation to verify
\begin{equation}
(1 + \abs x^2)^{\alpha + 2} e^{ix \cdot \xi}
= (1 - \partial_{\xi_1}^2 - \partial_{\xi_2}^2)^{\alpha + 2} e^{ix \cdot \xi}.
\end{equation}
Since this operator is self-adjoint, we can bound \eqref{eq:approx finite speed l1} with
\begin{equation}
\int_{\R^2} (1 + \abs x^2)^{-2}
    \int_{\R^2} \abs{e^{ix \cdot \xi}
    (1 - \partial_{\xi_1}^2 - \partial_{\xi_2}^2)^{\alpha + 2}
        \!\left[\varphi(2^{-k} \xi) e^{-it \abs\xi^2} \right]} \diff\xi \dx.
\end{equation}
The inner integral is bounded by $C (1 + 2^{(\alpha + 2)k} t^{\alpha + 2})$
since $\varphi$ is smooth and compactly supported.
The integral over $x$ is finite since the weight is integrable.
This gives the required bound.

The case $k = -1$ also gives a constant factor
since the multiplier $\Delta'_{-1} \Ct_t$ is rapidly decreasing.
Again we define $\Delta'_{-1}$ as a smooth indicator of a larger ball,
taking value $1$ in the support of $\Delta_{-1}$.
\end{proof}

\begin{theorem}[Tightness]\label{thm:nls tightness}
Assume that $\rho$ is as in Lemma~\ref{thm:approx finite speed} and $\varepsilon > 0$.
The sequence of periodic solutions $u_L$ is tight in
$C^{1/2-2\varepsilon}([0, T];\, H^{-\alpha - 4 - 2\varepsilon}(\rho^{1+\varepsilon}))$.
\end{theorem}
\begin{proof}
We will show that
\begin{equation}
\sup_L \E \norm{ u_L }_{C^{1/2-\varepsilon} ([0, T];\, H^{-\alpha - 4 - \varepsilon}(\rho))}^2 < \infty.
\end{equation}
This implies tightness in a slightly less regular space by Lemma~\ref{thm:arzela-ascoli}.

From the mild formulation of the equation we obtain that
\begin{equation}
u_L (t) - u_L (s) = (\Ct_{t} -\Ct_{s}) u_L(0)
    + \int^t_s \Ct_{t-r} \wick{u_L(r) \abs{u_L(r)}^2} \diff r,
\end{equation}
so we will need to estimate
\begin{equation}
\norm{ \int^t_s \Ct_{t-r} \wick{u_L(r) \abs{u_L(r)}^2} \diff r }_{H^{-\alpha - 4 - \varepsilon} (\rho)}
+ \norm{ (\Ct_{t} - \Ct_{s}) u (0) }_{H^{-\alpha - 4 - \varepsilon} (\rho)}.
\end{equation}
For the first term, we can use Cauchy--Schwarz to exchange the integrals:
\begin{equation}
\begin{split}
&\bignorm{\int_s^t \Ct_{t-r} \wick{u_L(r) \abs{u_L(r)}^2} \diff r}_{H^{-\alpha-4-\varepsilon}(\rho)}\\
\leq\; &\abs{t-s}^{1/2}
    \left[ \int_s^t \norm{\Ct_{t-r} \wick{u_L(r) \abs{u_L(r)}^2}}_{H^{-\alpha-4-\varepsilon}(\rho)}^2 \diff r \right]^{1/2}\\
\lesssim\; &\abs{t-s}^{1/2}
    \left[ \int_0^T (1 + T^{2\alpha + 4}) \norm{\wick{u_L(r) \abs{u_L(r)}^2}}_{H^{-2-\varepsilon}(\rho)}^2 \diff r \right]^{1/2}.
\end{split}
\end{equation}
Here we used the bound from Lemma~\ref{thm:approx finite speed}.
The Wick power is bounded in expectation by Theorem~\ref{thm:phi42 wick moments},
and the bound is uniform in $L$.

For the second term we use the functional derivative
\begin{equation}
(e^{-it\Delta} - e^{-is\Delta}) f
= \int_s^t (-i\Delta) e^{-ir\Delta} f \diff r
\end{equation}
and fundamental theorem of calculus to compute
\begin{equation}
\begin{split}
\norm{ (\Ct_{t} - \Ct_{s}) u_L(0) }_{H^{-\alpha- 4 - \varepsilon} (\rho)}
&= \norm{ \int_s^t \Delta \Ct_r u_L(0) \diff r }_{H^{-\alpha- 4 - \varepsilon} (\rho)}\\
&\leq \abs{t-s}^{1/2}
    \left[\int_0^T \norm{ \Delta \Ct_r u_L(0) }_{H^{-\alpha- 4 - \varepsilon} (\rho)}^2 \diff r\right]^{1/2}\\
&\lesssim \abs{t-s}^{1/2}
    \left[\int_0^T (1 + T^{2\alpha + 4}) \norm{ u_L(0) }_{H^{- \varepsilon} (\rho)}^2 \diff r\right]^{1/2}.
\end{split}
\end{equation}
Again the expectation is bounded.
All of these estimates are uniform in $L$ and hold for all $t, s \in {[{0},{T}]}$.
\end{proof}

By changing the probability space with Skorokhod's theorem (Lemma~\ref{thm:skorokhod})
we can assume that $u_L \to u$ almost surely.
Convergence of $u_L$ in distribution on $C^{1/2-2\varepsilon}([0, T];\, H^{-\alpha-4-2\varepsilon}(\rho))$
implies that $u_{L}(t)$ converges in distribution on $H^{-\alpha-4-2\varepsilon}(\rho)$. 
Since $\Law(u_L(t)) = \mu_{L}$,
it follows that any limit point $u$ will have $\Law(u(t)) = \mu$.

We still need to establish that $u$ solves the equation;
with the loss of regularity, we see that it satisfies the mild formulation.
However, this result gives no information about pathwise properties in
$H^{-\varepsilon}(\rho)$ where the $\phi^4_2$ measure is supported.

\begin{theorem}[Limit solves NLS]
There exists a probability space $\tilde{\Prob}$ and random variable
$\tilde{u} \in L^2 (\tilde\Prob, C^{\varepsilon} ([0, T];\, H^{-\alpha - 4 - \varepsilon} (\rho)))$ such that
\[ \tilde{u} (t) = \Ct_t \tilde u(0) + \int^t_0 \Ct_{t-s} \wick{\tilde u(s) \smallabs{\tilde u(s)}^2} \diff s \]
and $\Law (\tilde{u} (t)) = \mu$ for all $t \in [0, T]$. 
\end{theorem}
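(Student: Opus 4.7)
The plan is to combine the tightness of Theorem~\ref{thm:nls tightness} with
Prokhorov's theorem and Skorokhod's representation (Lemma~\ref{thm:skorokhod})
to extract a subsequence converging almost surely on a common probability
space $\tilde\Prob$, and then verify both the mild equation and the marginal law
for the limit. Concretely, the periodic solutions $u_L$ admit a subsequential
weak limit in $\mathcal X \coloneqq C^{\alpha-\varepsilon}([0,T];\, H^{-4-2\varepsilon}(\rho^{1+\varepsilon}))$,
so Skorokhod yields $\tilde u_{L_k} \sim u_{L_k}$ with $\tilde u_{L_k} \to \tilde u$
almost surely in $\mathcal X$ (after adjusting $\varepsilon$ to reach the space
claimed in the statement). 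The required $L^2$ integrability follows from
Fatou's lemma applied to the uniform-in-$L$ bound already established during
the tightness proof. The marginal law at each time is identified by combining
Bourgain's periodic invariance $\Law(u_L(t)) = \mu_L$ with the weak
convergence $\mu_{L_k} \to \mu$ of Theorem~\ref{thm:phi42 weak limit} and
continuity of the evaluation map $v \mapsto v(t)$ on $\mathcal X$.

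Passing to the limit in the mild formulation
\[
\tilde u_{L_k}(t) = \Ct_t \tilde u_{L_k}(0)
    + \int_0^t \Ct_{t-s}\wick{\tilde u_{L_k}(s)^3}\ds
\]
splits into linear and nonlinear parts. The linear term $\Ct_t \tilde u_{L_k}(0)
\to \Ct_t \tilde u(0)$ follows from almost-sure convergence of the initial data
combined with the boundedness of $\Ct_t$ on $H^{-4-2\varepsilon}(\rho)$
supplied by Lemma~\ref{thm:approx finite speed} (at the cost of a couple of
derivatives, since $\Ct_t$ does not regularize). For the Duhamel term the same
dispersive estimate reduces the problem to showing that
$\wick{\tilde u_{L_k}^3} \to \wick{\tilde u^3}$ in
$L^1([0,T];\, H^{-2-2\varepsilon}(\rho))$ in probability.

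The main obstacle is this Wick-power convergence, which I would handle exactly
as in Lemma~\ref{thm:local limit solves nlw}. Because the Schrödinger
multiplier $e^{-it|\xi|^2}$ is unitary on the Fourier side, the coupling
$u_L(s) = Z_L(s) + \phi_L(s)$ from Theorem~\ref{thm:phi42 weak limit} is
preserved in law at each fixed time, with uniform Wick-power moments supplied
by Theorem~\ref{thm:phi42 wick moments}. Using the continuous approximant
$f^\delta$ from Lemma~\ref{thm:wick approx}, I would decompose
\[
\wick{\tilde u_{L_k}^3} - \wick{\tilde u^3}
= [\wick{\tilde u_{L_k}^3} - f^\delta(\tilde u_{L_k})]
+ [f^\delta(\tilde u_{L_k}) - f^\delta(\tilde u)]
+ [f^\delta(\tilde u) - \wick{\tilde u^3}].
\]
The outer two terms are small in $L^p$ expectation uniformly in $L_k$ by
stationarity and Lemma~\ref{thm:wick approx} once $\delta$ is chosen small;
the middle term tends to zero almost surely by continuity of $f^\delta$ and
the a.s.\ convergence $\tilde u_{L_k} \to \tilde u$ in $\mathcal X$. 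A
Lusin--Egorov argument on the measurable map sending the initial distribution
to its Wick powers, exactly as in Lemma~\ref{thm:local limit solves nlw}, then
upgrades this to convergence in probability and completes the verification of
the equation.
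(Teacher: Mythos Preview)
Your proposal is correct and follows essentially the same route as the paper: Skorokhod representation after tightness, then the three-term decomposition $\wick{\tilde u_{L_k}^3} - \wick{\tilde u^3} = [\wick{\tilde u_{L_k}^3} - f^\delta(\tilde u_{L_k})] + [f^\delta(\tilde u_{L_k}) - f^\delta(\tilde u)] + [f^\delta(\tilde u) - \wick{\tilde u^3}]$ using the continuous approximant of Lemma~\ref{thm:wick approx}, with the outer terms controlled uniformly in $L$ via stationarity ($\Law(\tilde u_{L_k}(s)) = \mu_{L_k}$, $\Law(\tilde u(s)) = \mu$).

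The one place you diverge is the final step. The Lusin--Egorov argument you import from Lemma~\ref{thm:local limit solves nlw} is not needed here and does not quite fit: in that lemma it was used because the map from initial data to $\wick{w^j}$ is merely measurable, whereas here the whole point of introducing $f^\delta$ is that it \emph{is} continuous. The paper closes the middle term more directly by dominated convergence, using that $\|f^\delta(v)\|_{L^2(\rho)}$ is controlled polynomially by $\|v\|_{H^{-s}(\rho)}$; equivalently, since $\tilde u_{L_k} \to \tilde u$ in $C([0,T];H^{-4-2\varepsilon})$ the trajectories stay in a bounded set on which the polynomial $f^\delta$ is uniformly continuous, so the middle term goes to zero in $C([0,T];L^2(\rho))$ almost surely. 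Either of these is cleaner than Lusin--Egorov and avoids the awkwardness of trying to identify a measurable-but-not-continuous map on which to run it.
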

\begin{proof}
For clarity we omit the tildes on $\tilde u$ in the proof.
In addition to almost sure convergence of $u_L$ and $\wick{u_L^3}$, we have by tightness
\begin{equation}
u_L \to u
\quad\text{in}\quad
L^2(\tilde\Prob;\, C^{\varepsilon}([0,T];\, H^{-\alpha-4-\varepsilon}(\rho))).
\end{equation}
Hölder continuity implies that we also have convergence of
\begin{equation}
u_L(t) \to u(t)
\quad\text{in}\quad
L^2(\tilde\Prob;\, H^{-\alpha-4-\varepsilon}(\rho))
\end{equation}
for all $t \in [0,T]$.

We repeat the approximation argument of Lemma~\ref{thm:linear convergence to full space}.
Let $f^{3,\delta}(u)$ approximate $\wick{u^3}$ as in Lemma~\ref{thm:wick approx}.
Then
\begin{equation}
\begin{split}
\int_0^t \Ct_{t-s} (\wick{u(s)^3} - \wick{u_L(s)^3}) \ds
&= \int_0^t \Ct_{t-s} (\wick{u(s)^3} - f^{3,\delta}(u(s))) \ds\\
&\quad+ \int_0^t \Ct_{t-s} (f^{3,\delta}(u(s)) - f^{3,\delta}(u_L(s))) \ds\\
&\quad+ \int_0^t \Ct_{t-s} (\wick{u_L(s)^3} - f^{3,\delta}(u_L(s))) \ds.
\end{split}
\end{equation}
We will now bound the expectation uniformly in $L$.
The third term, and analogously the first, is bounded with 
\begin{equation}
    \begin{split}
&\mathrel{\phantom{=}} \E \bignorm{\int_0^t \Ct_{t-s} (\wick{u_L(s)^3} - f^{3,\delta}(u_L(s))) \diff s}
    _{H^{-4-\varepsilon}(\rho)}\\
&\lesssim_{t}  \sup_{L} \E_{\mu_{L}}
    \norm{\wick{\varphi^3} - f^{\delta}(\varphi)}_{H^{-\varepsilon}(\rho)}\\
&\lesssim \delta^{\alpha},
\end{split}
\end{equation}
where we used our bound on $\mathcal{T}$ and Lemma~\ref{thm:approx-wick}.

To bound the second term, 
we use boundedness of $f^{3,\delta}$:
\begin{equation}
\E \norm{\Ct_{t-s} f^{3,\delta}(u(s))}_{H^{-\alpha-2}(\rho)}
\lesssim \E \norm{f^{3,\delta}(u(s))}_{L^2(\rho)}
\lesssim \E \norm{u(s)}^{3}_{H^{-4-\varepsilon}(\rho)},
\end{equation}
and the same for $u_L$.
This gives that
\begin{equation}
\lim_{L \to \infty} \E \bignorm{\int_0^t \Ct_{t-s} (\wick{u(s)^3} - \wick{u_L(s)^3}) \ds}
    _{H^{-{4-\varepsilon}}(\rho)}
\lesssim_{t} \delta^\alpha.
\end{equation}
Since $\delta$ was arbitrary, this implies that 
\begin{equation}
\lim_{L \to \infty} \E \bignorm{\int_0^t \Ct_{t-s} (\wick{u(s)^3} - \wick{u_L(s)^3}) \ds}
    _{H^{-{4-\varepsilon}}(\rho)} = 0.
\end{equation}
By passing to a further subsequence, we then have almost sure convergence of these nonlinear terms.
This finishes the proof.
\end{proof}

Finally, we can modify the post-processing argument from Lemma~\ref{thm:global post-process}
to extend the solution from the time interval $[0,T]$ to $\R_+$.
This completes the proof of Theorem~\ref{thm:nls main result}.

\appendix

\section{Proof of Lemma~\ref{thm:wick approx}}\label{sec:wick approx proof}

Let us begin with some approximation results for the Green's function.

\begin{lemma}\label{thm:approx-gaussian}
Let $\Omega = \Lambda_{L}$ or $\Omega = \R^2$,
and let $\phi_1, \phi_2$ be two Gaussian fields
in $\Omega$ with translation invariant law,
considered as elements of $\mathcal{C}^{-\varepsilon}(\rho)$.

Let us fix a Fourier cutoff $\chi \in C^{\infty}_c(\R^2)$ supported on the unit ball
and define $\chi_{\delta}(x) = \chi(\delta x)$.
Denote
\begin{itemize}
\item $\lim_{\delta \to 0} \E[(\chi_{\delta}\ast \phi_{1})(x) (\chi_{\delta}\ast \phi_{1})(y)]=G_1(x-y)$,
\item $\lim_{\delta \to 0} \E[(\chi_{\delta}\ast\phi_{2}(x))(\chi_{\delta}\ast\phi_{2}(y))]=G_2(x-y)$,
\item $\lim_{\delta \to 0} \E[(\chi_{\delta}\ast\phi_{1}(x))(\chi_{\delta} \ast \phi_{2})(y)]=G_{1,2}(x-y)$,
\end{itemize}
and assume that for all $2 \leq q < \infty$ we have
\[
\begin{split}
\norm{G_{1}}_{L^{3q}}
    + \norm{G_{2}}_{L^{3q}} + \norm{G_{1,2}}_{L^{3q}} &\lesssim 1,\\
\norm{G_{1}-G_{1,2}}_{L^{3q}} + \norm{G_{2}-G_{1,2}}_{L^{3q}} &\leq \gamma.
\end{split}
\]
Then for $j \leq 3$, $2 \leq p < \infty$, and $\kappa > 2/p$ we have
\[
\E \norm{\wick{\phi^j_1} - \wick{\phi^j_2}}_{\mathcal{C}^{-\kappa}(\rho)}^p
\lesssim \gamma^{p/2}.
\]   
In the complex case this also holds with $\wick{\phi^j}$ replaced by
$\wick{|\phi|^2}$, $\phi^2$, or $\wick{|\phi|^2 \phi}$.
\end{lemma}
\begin{proof}
We treat the real case.
The complex case follows similarly.
Furthermore we set $j = 3$ for concreteness, as the other cases are simpler.
By Theorem~\ref{thm:besov embedding} we can consider the $B_{p,p}^{-\kappa/2}$ norm instead.
Then
\begin{equation}\label{eq:wick approx hc}
\sum_{k \geq -1} 2^{-kp\kappa/2}
    \E \norm{\Delta_k (\wick{\phi^3_1} - \wick{\phi^3_2})}_{L^p(\rho)}^p
\lesssim \sum_{k \geq -1} 2^{-kp\kappa/2}
    \left[ \E \abs{\Delta_k (\wick{\phi^3_1} - \wick{\phi^3_2})(0)}^2 \right]^{p/2}
\end{equation}
by translation invariance of the law and hypercontractivity.
It is hence sufficient to show that
\begin{equation}
\E \abs{\Delta_k(\wick{\phi^3_1} - \wick{\phi^3_2})(0)}^2 \lesssim \gamma 2^{k\kappa/2}.
\end{equation} 
Let $K_{k}$ be the kernel of $\Delta_k$.
We apply Wick's theorem to get
\begin{equation}\label{eq:approx-gaussian expanded}
\begin{split}
&\mathrel{\phantom{=}} \iint K_k(x) K_k(y)
    \E [(\wick{\phi^3_1(x)} - \wick{\phi^3_2(y)})(\wick{\phi^3_1(x)} - \wick{\phi^3_2(y)})] \dx \dy\\
&= \iint K_k(x) K_k(y) \, 3! \bigg[G_1(x-y)^3 + G_2(x-y)^3 - 2 G_{1,2}(x-y)^3 \bigg] \dx \dy\\
&\simeq \iint K_k(x) K_k(y) \bigg[(G_1 - G_{1,2})(G_1^2 + G_1 G_{1,2} + G_{1,2}^2)\\
&\qquad + (G_2 - G_{1,2})(G_2^2 + G_2 G_{1,2} + G_{1,2}^2) \bigg](x-y) \dx \dy\\
&\lesssim \gamma \norm{K_k}^{2}_{L^{q/(q-1)}}
    (\norm{G_1}_{L^{3q}}^2 + \norm{G_2}_{L^{3q}}^2 + \norm{G_{1,2}}_{L^{3q}}^2)\\
&\lesssim \gamma 2^{2k/q}.
\end{split}
\end{equation}
In the last line we used that $\norm{K_k}_{L^{q/(q-1)}} \lesssim 2^{k/q}$,
which follows from interpolating the $L^1$ and $L^\infty$ bounds for the kernel.
Choosing $q = 4/\kappa$, we get that~\eqref{eq:wick approx hc} converges.
\end{proof}

\begin{lemma}\label{thm:green-approx}
Let $\chi_{\delta}$ be a mollifier as above,
$2 \leq p < \infty$, and $G \in W^{1,q}(\R^2)$ for all $q < 2$.
For $\alpha < 1/{3p}$ we have
\[
\norm{\chi_{\delta}\ast G-G}_{L^{3p}}
    \lesssim \delta^{\alpha}
\;\text{ and }\;
\norm{\chi_{\delta} \ast \chi_{\delta} \ast G-  \chi_{\delta}\ast G}_{L^{3p}}
    \lesssim \delta^{\alpha}.
\]
Furthermore on $L\mathbb{T}^2$, the truncated Green's function takes the form
\[
G^N(x) = \sum_{n \in \frac{1}{L}, \abs n \leq N} \frac{1}{m^2 + |n|^2}e^{i n \cdot x}.
\]
Then for any $N_1 \leq N_2 \in \N$ and $p < \infty$,
there exists $\alpha > 0$ such that
\[
\norm{G^{N_1}(x) - G^{N_2}}_{L^{3p}} \lesssim_L N_1^{-\alpha}.
\]
\end{lemma}
\begin{proof}
The first statement follows from the assumption,
Besov embedding, and the convolution estimate~\cite[Lemma~A.8]{gubinelli_pde_2021}.
The second is proven in~\cite[Lemma~4.2]{oh_pedestrian_2018}. 
\end{proof}

With these estimates, we can first prove Lemma~\ref{thm:wick approx}
and then state a result on the convergence of Fourier-truncated fields.
In the following proof we work with the more convenient smooth cutoff instead of sharp truncation,
but this does not change the limiting objects.

\begin{lemma}\label{thm:approx-wick}
Let $Z$ be sampled from $\nu$ and $Z_L$ from $\nu_L$.
Define $Z_\delta = \chi_{\delta} (\inorm\nabla) Z$,
$Z_{L,\delta} = \chi_{\delta} (\inorm\nabla) Z_L$,
and $a_{\delta} = \E [(Z_\delta)^2]$.
In the real case we define
\[
f^{3,\delta}(Z) \coloneqq (Z_\delta)^3 - 3 a_\delta Z_\delta,
\]
whereas in the complex case we put
\begin{gather*}
f^{3,\delta}(Z)
\coloneqq Z_\delta |Z_\delta|^2 - 2 a_\delta Z_\delta,\\
f^{2,\delta}(Z) \coloneqq (Z_\delta)^2 - a_\delta.
\end{gather*}
Then if $2 \leq p < \infty$ and $\psi \in L^{4p} (\Prob, B_{3p, 3p}^{\varepsilon}(\rho^{1/4}))$,
there exists $\alpha > 0$ such that
\begin{align}
&\E \norm{\wick{(Z + \psi)^3} -
    f^{3,\delta}(Z_\delta + \psi)}^{p}_{B^{-\varepsilon}_{p,p} (\rho)}
\lesssim \delta^{\alpha},
\text{ and}\label{eq:wick approx first}\\
&\sup_{L} \E \norm{\wick{(Z_L + \psi)^3} -
    f^{3,\delta}(Z_{L,\delta} + \psi)}^{p}_{B^{-\varepsilon}_{p,p} (\rho)}
\lesssim \delta^{\alpha}.
\label{eq:wick approx second}
\end{align}
Analogous statements hold for $\wick{(Z + \psi)^2}$ and in the complex case.
\end{lemma}
\begin{proof}
We show only \eqref{eq:wick approx first} in the real case.
Equation~\eqref{eq:wick approx second} follows similarly,
once we note that we may replace the renormalization constant by $a_{\delta,L} = \E Z^{2}_{\delta,L}$
as in Lemma~\ref{lemma:wick-ordering-rem}.
Furthermore the square and the complex case follow analogously.

Since $f^{3,\delta}$ restricts the Fourier support of its argument to a bounded set,
it follows that its image is in $L^2(\rho)$ and the map is continuous.

Denote $\psi_{\delta} = \chi_\delta(\inorm\nabla)\psi$.
Then as in Lemma~\ref{thm:wick binomial} we have
\begin{equation}
f^{3,\delta}((Z_\delta + \psi)^3)
= \sum_{j = 0}^3 { (Z_\delta)^j }_\delta \,\psi_{\delta}^{3 - j},
\end{equation}
where $\wick{\,\cdot\,}_\delta$ denotes Wick ordering with renormalization constant $a_\delta$.
We can then estimate
\begin{equation}
\begin{split}
&\mathrel{\phantom{=}} \norm{\wick{(Z_\delta)^j}_{\delta} \, \psi_{\delta}^{3 - j}
    - \wick{Z^j} \, \psi^{3 - j}}_{B^{-\varepsilon}_{p,p}(\rho)}^p\\
&\lesssim \norm{\wick{Z_\delta^j}_\delta - \wick{Z^j}}_{\mathcal C^{-\varepsilon}(\rho^{1/4})}^p
        \norm{\psi_\delta^{3-j}}_{B^{3\varepsilon/2}_{p,p}(\rho^{3/4})}^p\\
&\qquad + \norm{\wick{Z^j}}_{\mathcal C^{-\varepsilon}(\rho^{1/4})}^p
        \norm{\psi_\delta^{3-j} - \psi^{3-j}}_{B^{3\varepsilon/2}_{p,p}(\rho^{3/4})}^p.
\end{split}
\end{equation}
We then take expectation and apply Hölder.
Then the bound for
\begin{equation}
\E \norm{\wick{Z_\delta^j}_\delta - \wick{Z^j}}_{\mathcal C^{-\varepsilon}(\rho^{1/4})}^{4p}
\end{equation}
follows from Lemmas~\ref{thm:approx-gaussian} and~\ref{thm:green-approx}
under the Green's function bounds given in \cite[Chapter~7]{glimm_quantum_1987}.
For $\psi - \psi_\delta$ we use the Bernstein estimate
\begin{equation}
\norm{\psi-\psi_\delta}_{B^{3\varepsilon/2}_{3p,3p}}
\leq \delta^{\varepsilon/2} \norm{\psi}_{B^{2\varepsilon}_{3p,3p}}.
\end{equation}
These finish the proof.
\end{proof}

\begin{lemma}\label{thm:fourier-wick-approx}
Let $Z_L$ be sampled from $\nu_L$ and let
$\psi \in L^{4p}(\Prob , B^{\varepsilon}_{p,p}(\Lambda_{L}))$.
Then there exists $\alpha > 0$ such that for all $N \geq \delta^{-1}$ we have
\begin{equation*}  
\E \norm{\wick{(P_N (Z_L + \psi))^3} - f^{3,\delta}(P_N [Z_{L,\delta} + \psi])}^{p}
    _{B^{-\varepsilon}_{p,p} (\rho)}
\lesssim \delta^{\alpha}.
\end{equation*}
\end{lemma}
\begin{proof}
The proof is a minor modification of Lemma~\ref{thm:approx-wick},
using now the second estimate in Lemma~\ref{thm:green-approx}.
\end{proof}

\section{Computations for Theorem~\ref*{thm:sq tightness}}\label{sec:phi42 appendix}

For brevity, we drop the subscript $L$ from the notation.
The following estimates are uniform in the period length $L$
and hold also in the infinite volume.
Similarly, we do not write the time dependency
since these pointwise-in-time estimates are uniform by stationarity.

Recall that we consider either the real or complex scalar field.
In order to prove Theorem~\ref{thm:sq tightness}, we need to bound the absolute value
of~\eqref{eq:sq tested} or~\eqref{eq:sq tested complex} with
\begin{equation}
Q(Z) + \delta \left(
    m^2 \norm{\psi}_{L^2(\rho)}^2
    + \norm{\psi}_{H^s(\rho)}^2
    + \norm{\psi}_{L^4(\rho^{1/2})}^4
\right),
\end{equation}
where $Q(Z)$ is bounded in expectation and $\delta > 0$ is chosen to be small.

We bound each of the terms in the following lemmas,
selecting $Q(Z)$ to consist of norms of Wick powers of $Z$.
The norms have bounded expectation by Lemma~\ref{thm:wick gff moments}.
In each lemma we use the product inequality (Theorem~\ref{thm:besov multiplication})
and Besov duality (Theorem~\ref{thm:besov duality}).
These calculations are originally due to Mourrat and Weber \cite{mourrat_global_2017}.

\begin{lemma}\label{thm:gh lemma}
Let $\rho_1$ and $\rho_2$ be polynomial weights and $s, \varepsilon > 0$.
We have the following two estimates:
\begin{align*}
\norm{f^2}_{B^s_{1,1}(\rho_1 \rho_2)}
&\lesssim \norm{f}_{L^2(\rho_1)} \norm{f}_{H^{s+\varepsilon}(\rho_2)}\\
\norm{f^3}_{B^s_{1,1}(\rho_1^2 \rho_2)}
&\lesssim \norm{f}_{L^4(\rho_1)}^2 \norm{f}_{H^{s+\varepsilon}(\rho_2)}.
\end{align*}
In the complex setup we can replace $f^3$ by $f \abs f^2$ or $\overline f \abs f^2$.
\end{lemma}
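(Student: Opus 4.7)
The plan is to handle each inequality by a paraproduct decomposition, with the key structural feature being that the same function $f$ appears in all factors. This matters because the asymmetric bilinear estimate $\norm{FG}_{B^s_{1,1}(\rho_1\rho_2)} \lesssim \norm{F}_{L^2(\rho_1)} \norm{G}_{H^{s+\varepsilon}(\rho_2)}$ is \emph{false} for arbitrary $F, G$: taking $F$ to be a high-frequency oscillation and $G$ smooth gives a counterexample, since $\norm{F}_{L^2}$ stays bounded while $\norm{FG}_{B^s_{1,1}}$ grows like $2^{ls}$.

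For the $f^2$ estimate I would use Bony's decomposition
\[
f^2 = 2\pi_{<}(f,f) + \pi_0(f,f),
\quad
\pi_{<}(f,g) = \sum_{j \geq 1} S_{j-1}f \cdot \Delta_j g,
\]
where the self-duality $\pi_{>} = \pi_{<}$ at $f = g$ is what saves the argument. Fourier-support considerations force $\Delta_l \pi_{<}(f,f) = 0$ unless $l \sim j$, and Hölder together with the commutation of polynomial weights through the Littlewood--Paley kernels (which costs a uniform constant in $l$) gives
\[
\norm{\rho_1\rho_2 \Delta_l \pi_{<}(f,f)}_{L^1}
\lesssim \norm{f}_{L^2(\rho_1)} \norm{\rho_2 \Delta_l f}_{L^2}.
\]
Multiplying by $2^{ls}$, summing in $l$, and using $2^{ls}\norm{\rho_2 \Delta_l f}_{L^2} \leq 2^{-l\varepsilon} \norm{f}_{H^{s+\varepsilon}(\rho_2)}$ via the embedding $H^{s+\varepsilon}(\rho_2) \hookrightarrow B^{s+\varepsilon}_{2,\infty}(\rho_2)$ yields a convergent geometric series. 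The resonant piece $\pi_0(f,f)$ has its $j$-th summand Fourier-supported in $\{\abs\xi \lesssim 2^j\}$, so $\Delta_l$ is nontrivial only when $l \lesssim j$; swapping the two summations and using $\sum_{l \leq j+c} 2^{ls} \lesssim 2^{js}$ (crucially using $s > 0$) reduces the problem to the convergent geometric series $\sum_j 2^{-j\varepsilon} \norm{f}_{L^2(\rho_1)} \norm{f}_{H^{s+\varepsilon}(\rho_2)}$, where uniform boundedness of $\Delta_j$ on weighted $L^2$ is used.

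For the $f^3$ estimate I would iterate: write $f^3 = f \cdot f^2$ and paraproduct-decompose. The cleanest term is $\sum_j S_{j-1}(f^2) \cdot \Delta_j f$, where the weighted $L^2$ bound
\[
\norm{\rho_1^2 S_{l-1}(f^2)}_{L^2} \lesssim \norm{f^2}_{L^2(\rho_1^2)} = \norm{f}_{L^4(\rho_1)}^2
\]
is pure Hölder, and pairing with $\norm{\rho_2 \Delta_l f}_{L^2}$ works as before. The remaining (symmetric and resonant) terms are handled by further expanding $f^2$ via its own paraproduct decomposition, producing trilinear sums estimated by Hölder with exponents $4, 4, 2$ distributed over the two $S$-projections of $f$ (carrying the $\rho_1$-weights) and the one $\Delta$-projection (carrying the $\rho_2$-weight and the regularity gain); summability again follows from the $s > 0$ mechanism.

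The principal technical obstacle is not the paraproduct combinatorics but the careful commutation of polynomial weights through the projectors $\Delta_l = K_l \ast$: since $K_l$ is a dyadically rescaled Schwartz function, the pointwise inequality $\rho(x) \lesssim \rho(y)(1+\abs{x-y})^{\alpha}$ lets one transfer $\rho$ across the convolution at uniform cost in $l$. This bookkeeping is routine after the Mourrat--Weber framework \cite{mourrat_global_2017}, but occupies the bulk of a detailed write-up.
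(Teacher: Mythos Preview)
The paper does not give its own proof of this lemma; it simply cites \cite[Lemma~A.7]{gubinelli_pde_2021} and moves on. Your paraproduct approach is the standard one and is essentially how the result is proved in the cited reference: the key structural point you identify---that the symmetry $\pi_>(f,f)=\pi_<(f,f)$ lets one always place the high-frequency block in the $H^{s+\varepsilon}$-controlled factor, circumventing the failure of the asymmetric bilinear estimate---is exactly the mechanism used there, together with the $s>0$ summation for the resonant piece and the routine commutation of polynomial weights through the Littlewood--Paley kernels. Your sketch of the $f^3$ case via iterated paraproduct (or equivalently a direct trilinear expansion with H\"older exponents $4,4,2$) is also the right idea; the combinatorics are slightly more tedious but contain no new difficulty. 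In short: there is nothing to compare against in the present paper, and your outline is correct and matches the approach in the source it cites.
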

\begin{proof}
\cite[Lemma A.7]{gubinelli_pde_2021}.
The decomposition used in the proof adapts naturally to the complex variant.
\end{proof}

\begin{lemma}
Assume that $\rho \in L^1(\R^2)$ and $\varepsilon < 1/4$.
Then for any $\delta>0$ there exists a constant $C>0$ that
\[
\abs{\int_{\R^2} \rho^2 \psi^3 Z \dx}
\leq C \norm{Z}_{\mathcal C^{-\varepsilon}(\rho^{1/8})}^8
    + \delta \left( \norm{\psi}_{L^4(\rho^{1/2})}^4 + \norm{\psi}_{H^s(\rho)}^2 \right).
\]
In the complex case we can replace $\psi^3$ on the left by $\psi \abs\psi^2$.
\end{lemma}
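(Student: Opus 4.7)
The plan combines Besov duality, the cubic product estimate of Lemma~\ref{thm:gh lemma}, interpolation between $L^2$ and $H^1$, and a three-way Young's inequality. I split the weight as $\rho = \rho^{1/16}\cdot\rho^{15/16}$ so that the small power $\rho^{1/16}$ is paired with the distributional factor $Z$ and the remainder is distributed between the $L^4$ and $H^1$ norms of $\phi$. The choice of $1/16$ is dictated by the target power $8$ on $\norm{Z}$ (so that $8 \cdot 1/16 = 1/2$, matching the weight in $H^1(\rho^{1/2})$).

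First, by Theorem~\ref{thm:besov duality},
\[
\abs{\int \rho \phi^3 Z \dx}
\lesssim \norm{Z}_{\mathcal{C}^{-\varepsilon}(\rho^{1/16})}\, \norm{\phi^3}_{B^{\varepsilon}_{1,1}(\rho^{15/16})}.
\]
Applying Lemma~\ref{thm:gh lemma} with $\rho_1 = \rho^{1/4}$ and $\rho_2 = \rho^{7/16}$ (so that $\rho_1^2 \rho_2 = \rho^{15/16}$) yields
\[
\norm{\phi^3}_{B^{\varepsilon}_{1,1}(\rho^{15/16})}
\lesssim \norm{\phi}_{L^4(\rho^{1/4})}^2\, \norm{\phi}_{H^{2\varepsilon}(\rho^{7/16})}.
\]

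To eliminate the intermediate $H^{2\varepsilon}$ factor I apply Theorem~\ref{thm:besov interpolation} between $L^2(\rho^\beta)$ and $H^1(\rho^{1/2})$ with $\theta = 1-2\varepsilon$, so that $\norm{\phi}_{H^{2\varepsilon}(\rho^{7/16})} \lesssim \norm{\phi}_{L^2(\rho^\beta)}^{1-2\varepsilon}\norm{\phi}_{H^1(\rho^{1/2})}^{2\varepsilon}$ with $\beta = (7/16 - \varepsilon)/(1-2\varepsilon)$. The weighted $L^2$-norm is then dominated by $\norm{\phi}_{L^4(\rho^{1/4})}$ via Cauchy--Schwarz, using $\rho^{4\beta - 1} \in L^1$ (which requires $\alpha$ in $\rho = \inorm\cdot^{-\alpha}$ to be taken sufficiently large). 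Combining everything gives
\[
\abs{\int \rho \phi^3 Z \dx}
\lesssim \norm{Z}_{\mathcal{C}^{-\varepsilon}(\rho^{1/16})}\, \norm{\phi}_{L^4(\rho^{1/4})}^{3-2\varepsilon}\, \norm{\phi}_{H^1(\rho^{1/2})}^{2\varepsilon}.
\]
I finish with a three-way Young's inequality with exponents $(8,\ 4/(3-2\varepsilon),\ 1/\varepsilon)$ together with a scaling parameter that brings the coefficients of the two $\phi$-norms down to the prescribed $\delta$.

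The main obstacle is the calibration of this final Young's step: the inverse-exponent sum equals $1/8 + (3-2\varepsilon)/4 + \varepsilon = 7/8 + \varepsilon/2$, which reaches $1$ exactly at the endpoint $\varepsilon = 1/4$. For the assumed range $\varepsilon < 1/4$ there is subcritical slack, which produces a bounded additive constant; this is harmless and can be absorbed into $C$ by treating the case $\norm{Z}_{\mathcal{C}^{-\varepsilon}(\rho^{1/16})} \leq 1$ separately, where the original integral is directly controlled by the $\phi$-terms. A secondary bookkeeping point is to verify the integrability of $\rho^{4\beta-1}$ in the interpolation step, which forces $\alpha$ to exceed $8/3 + O(\varepsilon)$.
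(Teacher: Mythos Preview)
Your approach follows the same opening moves as the paper --- duality with the split $\rho = \rho^{1/16}\cdot\rho^{15/16}$, then Lemma~\ref{thm:gh lemma} --- but diverges at the treatment of the factor $\norm{\phi}_{H^{2\varepsilon}(\rho^{7/16})}$. You interpolate it directly between $L^2$ and $H^1$ with $\theta = 1-2\varepsilon$, which lands you at $\norm{Z}\,\norm{\phi}_{L^4}^{3-2\varepsilon}\,\norm{\phi}_{H^1}^{2\varepsilon}$ and a strictly subcritical Young triple. The paper instead first trades weight for integrability via H\"older, $\norm{\rho^{7/16}\Delta_j\phi}_{L^2}\le \norm{\rho^{1/16}}_{L^8}\norm{\rho^{3/8}\Delta_j\phi}_{L^{8/3}}$, and then bumps the regularity up to $1/2$ so as to interpolate $B^{1/2}_{8/3,8/3}(\rho^{3/8})$ between $L^4(\rho^{1/4})$ and $H^1(\rho^{1/2})$ at $\theta=1/2$. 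This produces $\norm{Z}\,\norm{\phi}_{L^4}^{5/2}\,\norm{\phi}_{H^1}^{1/2}$, whose Young exponents $(8,\,8/5,\,4)$ sum to exactly~$1$, yielding the stated bound with no additive constant.

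Your subcritical route does give a correct inequality, but only of the form $C(1+\norm{Z}^8) + \delta(\cdots)$. Your proposed fix --- handling $\norm{Z}\le 1$ separately --- does not actually eliminate the additive constant: the product $\norm{\phi}_{L^4}^{3-2\varepsilon}\norm{\phi}_{H^1}^{2\varepsilon}$ is itself subcritical relative to $\norm{\phi}_{L^4}^4 + \norm{\phi}_{H^1}^2$, so Young still leaves a constant behind. In the application (bounding $G(Z,\phi)$ in Appendix~\ref{sec:phi42 appendix}) an additive constant is entirely harmless, so your argument is adequate for the paper's purposes; but as a proof of the lemma exactly as stated, the paper's sharper interpolation is needed.
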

\begin{proof}
We first use duality and Lemma~\ref{thm:gh lemma} to estimate
\begin{equation}\label{eq:bound psi3 z}
\begin{split}
\int_{\R^2} \abs{\rho^2 \psi^3 Z} \dx
&\lesssim \norm{\psi^3}_{B^\varepsilon_{1,1}(\rho^{15/8})}
    \norm{Z}_{\mathcal C^{-\varepsilon}(\rho^{1/8})}\\
&\lesssim \norm{\psi}_{L^4(\rho^{1/2})}^2
    \norm{\psi}_{H^{2\varepsilon}(\rho^{7/8})}
    \norm{Z}_{\mathcal C^{-\varepsilon}(\rho^{1/8})}.
\end{split}
\end{equation}
Inside the middle Besov norm, we can trade off some weight via
\begin{equation}
\norm{\rho^{7/8} \Delta_j \psi}_{L^2}
\leq \norm{\rho^{1/8}}_{L^8} \norm{\rho^{3/4} \Delta_j \psi}_{L^{8/3}}.
\end{equation}
We can also increase the regularity from $2\varepsilon$ to $1/2$.
This simplifies the interpolation
\begin{equation}
\norm{\psi}_{B^{1/2}_{8/3,\infty}(\rho^{3/4})}
\lesssim \norm{\psi}_{B^{0}_{4,\infty}(\rho^{1/2})}^{1/2}
    \norm{\psi}_{B^{1}_{2,\infty}(\rho)}^{1/2}
\lesssim \norm{\psi}_{L^4(\rho^{1/2})}^{1/2}
    \norm{\psi}_{H^{1}(\rho)}^{1/2}.
\end{equation}
We substitute this back into~\eqref{eq:bound psi3 z}
and finish with Young's inequality.
\end{proof}

\begin{lemma}
Assume that $\rho^{1/2} \in L^1(\R^2)$ and $\varepsilon < 1/2$.
Then for every $\delta>0$ there exists a constant $C>0$ such that
\[
\abs{\int_{\R^2} \rho^2 \psi^2 \wick{Z^2} \dx}
\lesssim C \norm{\wick{Z^2}}_{\mathcal C^{-\varepsilon}(\rho^{1/8})}^4
    + \delta \left( \norm{\psi}_{L^4(\rho^{1/2})}^4 + \norm{\psi}_{H^s(\rho)}^2 \right).
\]
The same result holds in the complex case
with $\wick{Z^2}$ replaced by either $\wick{\abs Z^2}$ or $Z^2$ on both sides,
and $\psi^2$ optionally replaced by $\abs\psi^2$ on the left.
\end{lemma}
\begin{proof}
Again, duality and Lemma~\ref{thm:gh lemma} give
\begin{equation}
\int_{\R^2} \abs{\rho^2 \psi^2 \wick{Z^2}} \dx
\lesssim \norm{\psi}_{L^2(\rho^{7/8})}
    \norm{\psi}_{H^{2\varepsilon}(\rho)}
    \norm{\wick{Z^2}}_{\mathcal C^{-\varepsilon}(\rho^{1/8})}.
\end{equation}
We can again trade off some weight in
\begin{equation}
\norm{\rho^{7/8} \psi}_{L^2}
\leq \norm{\rho^{3/8}}_{L^{4/3}} \norm{\rho^{1/2} \psi}_{L^4}.
\end{equation}
We can increase the regularity in the middle term
and make the weight larger in the last term to make them match the statement.
Young's inequality again finishes the proof.
The complex variants are proved identically.
\end{proof}

\begin{lemma}
Assume that $\rho^{1/2} \in L^1(\R^2)$ and $\varepsilon < 1$.
Then for every $\delta>0$ there exists $C>0$ such that
\[
\abs{\int_{\R^2} \rho^2 \psi \wick{Z^3} \dx}
\leq C \norm{\wick{Z^3}}_{\mathcal C^{-\varepsilon}(\rho^{1/2})}^2
    + \delta \norm{\psi}_{H^s(\rho)}^2.
\]
The same bound holds with $\wick{Z \abs Z^2}$ in place of $\wick{Z^3}$.
\end{lemma}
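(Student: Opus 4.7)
The plan is to imitate the pattern of the two preceding lemmas, exploiting the fact that $\phi$ here appears only to the first power (so Lemma~\ref{thm:gh lemma} is not required). First I would apply Theorem~\ref{thm:besov duality} with the weight splitting $\rho=\rho^{3/4}\cdot\rho^{1/4}$, dual exponents $(p,r)=(1,1)$ and $(p',r')=(\infty,\infty)$, and regularity $s=\varepsilon$, producing
\[
\int_{\R^2}\abs{\rho\phi\wick{Z^3}}\dx \lesssim \norm{\phi}_{B^{\varepsilon}_{1,1}(\rho^{3/4})}\,\norm{\wick{Z^3}}_{\mathcal C^{-\varepsilon}(\rho^{1/4})}.
\]
This precise choice of exponents is what makes the Hölder-type norm $\mathcal C^{-\varepsilon}$ appear on the right, matching the statement; the more symmetric $(2,2)/(2,2)$ duality would only yield an $H^{-\varepsilon}$ bound.

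Next I would redistribute the weight by a direct Cauchy--Schwarz on each Littlewood--Paley block, $\norm{\rho^{3/4}\Delta_k\phi}_{L^1}\leq\norm{\rho^{1/4}}_{L^2}\norm{\rho^{1/2}\Delta_k\phi}_{L^2}$; multiplying by $2^{k\varepsilon}$ and summing over $k$ yields $\norm{\phi}_{B^{\varepsilon}_{1,1}(\rho^{3/4})}\lesssim\norm{\phi}_{B^{\varepsilon}_{2,1}(\rho^{1/2})}$. The prefactor $\norm{\rho^{1/4}}_{L^2}^2=\norm{\rho^{1/2}}_{L^1}$ is finite because $\rho\leq 1$ gives $\rho^{1/2}\leq\rho^{1/4}$ pointwise while the hypothesis is $\rho^{1/4}\in L^1$.

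Then a standard Besov embedding from Theorem~\ref{thm:besov embedding} closes the estimate: choose $\eta>0$ small enough that $\varepsilon+\eta<1$ (this is where the hypothesis $\varepsilon<1$ enters) and chain
\[
\norm{\phi}_{B^{\varepsilon}_{2,1}(\rho^{1/2})} \lesssim \norm{\phi}_{B^{\varepsilon+\eta}_{2,\infty}(\rho^{1/2})} \lesssim \norm{\phi}_{B^{1}_{2,2}(\rho^{1/2})} = \norm{\phi}_{H^{1}(\rho^{1/2})}.
\]
Combining the three displays and applying Young's product inequality $ab\leq \delta b^2+\tfrac{a^2}{4\delta}$ with $a=\norm{\wick{Z^3}}_{\mathcal C^{-\varepsilon}(\rho^{1/4})}$ and $b=\norm{\phi}_{H^1(\rho^{1/2})}$ splits the right-hand side into the two terms required by the lemma.

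No real obstacle arises here. The argument is easier than the two preceding lemmas because $\phi$ is linear, so Lemma~\ref{thm:gh lemma} is not needed and no $L^4(\rho^{1/4})$ term has to be carried. The only bookkeeping point is the passage from $\rho^{1/4}\in L^1$ to $\rho^{1/4}\in L^2$, which is immediate from $\rho\leq 1$.
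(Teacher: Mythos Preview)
Your proof is correct and follows essentially the same approach as the paper: duality with the weight split $\rho=\rho^{3/4}\cdot\rho^{1/4}$, then a weight/integrability tradeoff and a regularity embedding to reach $H^1(\rho^{1/2})$, and finally Young's inequality. The only cosmetic difference is the order of the two intermediate steps: the paper first raises the regularity, $B^\varepsilon_{1,1}(\rho^{3/4})\hookrightarrow B^1_{1,2}(\rho^{3/4})$, and then does the H\"older tradeoff to $B^1_{2,2}(\rho^{1/2})$, whereas you do the H\"older tradeoff first and then the regularity embedding.
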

\begin{proof}
By duality
\begin{equation}
\int_{\R^2} \abs{\rho^2 \psi \wick{Z^3}} \dx
\lesssim \norm{\psi}_{B^\varepsilon_{1,1}(\rho^{3/2})}
    \norm{\wick{Z^3}}_{\mathcal C^{-\varepsilon}(\rho^{1/2})}.
\end{equation}
Then we do a series of tradeoffs in
\begin{equation}
\norm{\psi}_{B^\varepsilon_{1,1}(\rho^{3/2})}
\lesssim \norm{\psi}_{B^{1}_{1,2}(\rho^{3/2})}
\lesssim \norm{\psi}_{B^{1}_{2,2}(\rho)}
\end{equation}
and finish with Young's inequality.
The other two cases are identical.
\end{proof}

\begin{lemma}
Assume that $\rho^{1/2} \in L^1(\R^2)$.
Then there exists $C > 0$ such that
\[
\abs{\int_{\R^2} \psi (\nabla\rho^2 \cdot \nabla\psi) \dx}
\leq C + \delta
    \left( \norm{\psi}_{L^4(\rho^{1/2})}^4 + \norm{\psi}_{H^s(\rho)}^2 \right).
\]
The same bound also holds if either $\psi$ on the left
is replaced by $\overline\psi$.
\end{lemma}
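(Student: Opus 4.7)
The plan is to exploit the fact that the polynomial weight $\rho(x) = \inorm{x}^{-\alpha}$ satisfies the pointwise bound $|\nabla \rho(x)| \leq \alpha \inorm{x}^{-\alpha - 1} \leq \alpha\, \rho(x)$, so that the commutator term is essentially a weighted $L^2$ pairing of $\phi$ and $\nabla\phi$. First I would write
\[
\abs{\int_{\R^2} \phi\, (\nabla\rho \cdot \nabla\phi) \dx}
\leq \alpha \int_{\R^2} \rho\, |\phi|\, |\nabla\phi| \dx
= \alpha \int_{\R^2} (\rho^{1/2} |\phi|)(\rho^{1/2} |\nabla\phi|) \dx,
\]
and then Cauchy--Schwarz gives the upper bound $\alpha \smallnorm{\rho^{1/2}\phi}_{L^2} \smallnorm{\rho^{1/2} \nabla\phi}_{L^2}$.

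Next I would apply Young's inequality $ab \leq \tfrac{\delta'}{2} a^2 + \tfrac{1}{2\delta'} b^2$ to split this product. The gradient factor is already part of the $H^1(\rho^{1/2})$ norm on the right-hand side of the target inequality, so it suffices to absorb a $\delta'\, \smallnorm{\rho^{1/2} \nabla\phi}_{L^2}^2$ and be left with a term proportional to $\smallnorm{\rho^{1/2}\phi}_{L^2}^2$.

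The remaining $L^2(\rho^{1/2})$ term is handled by Cauchy--Schwarz against the integrable weight $\rho$, using $\rho^{1/2} \in L^1$:
\[
\smallnorm{\rho^{1/2}\phi}_{L^2}^2 = \int \rho^{1/2} \cdot (\rho^{1/2}\phi^2) \dx
\leq \smallnorm{\rho^{1/2}}_{L^1}^{1/2} \left(\int \rho^{1/2}\,\rho^{1/2}\phi^4 \dx\right)^{1/2}
\leq C\, \smallnorm{\rho^{1/4}\phi}_{L^4}^2,
\]
where I used $\rho^{1/2} \cdot \rho^{1/2} = \rho \leq 1$ in the second integrand (or, equivalently, bound $\rho^{1/2}$ by $1$). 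A second application of Young $ab \leq \tfrac{\delta''}{2} a^2 + \tfrac{1}{2\delta''}$ then converts $C \smallnorm{\rho^{1/4}\phi}_{L^4}^2 = C \cdot 1 \cdot \smallnorm{\rho^{1/4}\phi}_{L^4}^2$ into $\delta''\, \smallnorm{\rho^{1/4}\phi}_{L^4}^4$ plus an absolute constant depending on $\delta''$ and $\smallnorm{\rho^{1/2}}_{L^1}$.

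Combining these two applications of Young and choosing $\delta', \delta''$ small enough that $\alpha \delta' + \alpha \delta''/(2\delta') \leq \delta$, the conclusion follows with $C$ depending on $\alpha$, $\delta$, and the integral $\smallnorm{\rho^{1/2}}_{L^1}$ (all finite by hypothesis). There is no real obstacle here; the only mild subtlety is that one must choose the intermediate Young constant $\delta'$ so that the resulting coefficients in front of both the $L^4$ and $H^1$ norms are at most $\delta$, which is routine.
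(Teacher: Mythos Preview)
Your proof is correct and follows essentially the same route as the paper: the pointwise bound $|\nabla\rho|\le\alpha\rho$, then Cauchy--Schwarz to get $\alpha\|\phi\|_{L^2(\rho^{1/2})}\|\nabla\phi\|_{L^2(\rho^{1/2})}$, then a weight--$L^p$ trade-off $\|\phi\|_{L^2(\rho^{1/2})}\lesssim\|\phi\|_{L^4(\rho^{1/4})}$ followed by two applications of Young. The displayed Cauchy--Schwarz step has a small bookkeeping slip (the constant should be $\|\rho\|_{L^1}^{1/2}$ rather than $\|\rho^{1/2}\|_{L^1}^{1/2}$), but the conclusion $\|\rho^{1/2}\phi\|_{L^2}^2\le C\|\rho^{1/4}\phi\|_{L^4}^2$ is correct under the hypothesis $\rho^{1/4}\in L^1$, and the argument goes through unchanged.
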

\begin{proof}
Let us observe that we can write the dot product components as
\begin{equation}
(\partial_j \rho^2)(\partial_j \psi)
= (\partial_j [1 + x_1^2 + x_2^2]^{-\alpha} )(\partial_j \psi)
= -\frac{2\alpha x_j \rho(x)^2}{1 + x_1^2 + x_2^2} (\partial_j \psi)
\end{equation}
The factor in front is uniformly bounded by $\alpha \rho(x)^2$.
Thus
\begin{equation}
\begin{split}
\int_{\R^2} \abs{ \psi(x) (\nabla\rho^2 \cdot \nabla\psi)(x) } \dx
&\leq \alpha \int_{\R^2} \rho(x)^2 \abs{\psi(x)} \abs{\nabla\psi(x)} \dx\\
&\leq \alpha \norm{\psi}_{L^2(\rho)} \norm{\nabla\psi}_{L^2(\rho)}\\
&\leq C + \delta
    \left( \norm{\psi}_{L^4(\rho^{1/2})}^4 + \norm{\psi}_{H^s(\rho)}^2 \right),
\end{split}
\end{equation}
where we did again a weight--$L^p$ tradeoff and applied Young.
\end{proof}

\section{Exponential tails}\label{sec:tails}

In order to prove the existence of Wick powers,
we need to establish exponential tails of some weighted Besov norms for
$\phi_{2, L}^4$ uniformly in $L$.
More concretely we will define the measure
\begin{equation}
\bar{\mu}_L(A) = \frac{1}{\mathcal Z_L}
    \int_A \exp (h(\norm{\inorm\nabla^{-\varepsilon} \phi}_{L^p (\rho)})) \diff \mu_L(\phi),
\end{equation}
where $h \colon \R \to \R$ is a smooth function, constant near $0$ and growing linearly at infinity,
and $\mathcal Z_L$ is the associated normalization constant.
We will prove that $\sup_L \mathcal Z_L < \infty$.

We begin with the following lemma.
In finite volume the Gaussian tails of $\mu_L$ are not difficult to
establish; see \cite[Section~3]{barashkov_variational_2020}.
This \emph{a priori} bound means that the assumptions of the lemma are satisfied.
The lemma then makes the uniform bound easier to derive.

\begin{lemma}[{{\cite[Lemma~A.7]{barashkov_elliptic_2021}}}]\label{lemma:exponential-trick}
Let $(\Omega, F)$ be a measurable space and $\upsilon$ be a probability
measure on $\Omega$. Let $S \colon \Omega \mapsto \R$ be a measurable
function such that
\[
\exp (S) \in L^1 (\mathrm d \upsilon).
\]
Define $\diff \nu_S = \frac{1}{\int \exp (S) \diff \upsilon} \exp (S) \diff \upsilon$.
Then
\[
\int \exp (S) \diff \upsilon \leq \exp \left( \int S (x) \diff\upsilon_S \right).
\]
\end{lemma}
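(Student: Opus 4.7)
\medskip
\noindent\textbf{Proof plan.}
The inequality to be established is the classical variational (Donsker--Varadhan type) dual bound for the log-partition function, and the natural object mediating both sides is the relative entropy of $\nu_S$ with respect to $\upsilon$. The plan is to rewrite the difference $\int S \, d\nu_S - \log Z$, where $Z \coloneqq \int \exp(S) \, d\upsilon$, directly as a relative entropy and then invoke its non-negativity.

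First I would record that the hypothesis $\exp(S) \in L^1(d\upsilon)$ ensures $0 < Z < \infty$, so $\nu_S$ is a well-defined probability measure absolutely continuous with respect to $\upsilon$, with Radon--Nikodym derivative $d\nu_S / d\upsilon = Z^{-1} \exp(S)$. Taking logarithms of the density yields the pointwise identity
\[
\log \frac{d\nu_S}{d\upsilon} = S - \log Z
\qquad \upsilon\text{-almost everywhere, and hence } \nu_S\text{-almost everywhere.}
\]
Integrating against $\nu_S$ (which is a probability measure, so the constant $\log Z$ is preserved) gives
\[
\int S \, d\nu_S - \log Z \;=\; \int \log \frac{d\nu_S}{d\upsilon} \, d\nu_S \;=\; H(\nu_S \,\|\, \upsilon).
\]

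The key step is now the non-negativity of the relative entropy. I would justify this by applying Jensen's inequality to the strictly concave function $\log$ under the measure $\nu_S$ to the random variable $d\upsilon / d\nu_S$:
\[
\int \log \frac{d\upsilon}{d\nu_S} \, d\nu_S
\;\leq\; \log \int \frac{d\upsilon}{d\nu_S} \, d\nu_S
\;=\; \log \int d\upsilon \;=\; 0,
\]
(using that $\nu_S$ dominates $\upsilon$ on the set where $d\nu_S/d\upsilon > 0$, and that the contribution from $\{d\nu_S/d\upsilon = 0\}$ is $\nu_S$-null by construction). Multiplying by $-1$ gives $H(\nu_S \,\|\, \upsilon) \geq 0$. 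Combined with the identity above, this yields $\log Z \leq \int S \, d\nu_S$, and exponentiating gives the claim.

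There is no real obstacle here; the only mild subtlety is the handling of $\{\exp(S) = 0\}$ (equivalently $d\nu_S/d\upsilon = 0$) when inverting the Radon--Nikodym derivative, which is dealt with by restricting the Jensen step to the support of $\nu_S$ and noting this set carries all of $\nu_S$'s mass. Everything else is a direct computation.
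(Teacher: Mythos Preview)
Your proof is correct and follows essentially the same idea as the paper's: both reduce to a single application of Jensen's inequality under the tilted measure $\nu_S$. The paper is marginally more direct, observing the identity $\bigl(\int e^{-S}\,d\nu_S\bigr)\bigl(\int e^{S}\,d\upsilon\bigr)=1$ and then applying Jensen to the first factor, whereas you package the same computation as the non-negativity of the relative entropy $H(\nu_S\,\|\,\upsilon)$.
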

\begin{proof}
Multiplying both sides of
\begin{equation}
    \diff \nu_S = \frac{1}{\int \exp (S) \diff \upsilon} \exp (S) \diff\upsilon
\end{equation}
by $\exp (- S)$ and integrating we obtain
\begin{equation}
\left( \int \exp (- S) \diff \upsilon_S \right) \left( \int \exp (S)
    \diff \upsilon \right) = 1.
\end{equation}
Then it remains to apply Jensen's inequality to the first factor.
\end{proof}

We choose $S = h(\norm{\rho \inorm\nabla^{- \varepsilon} \phi}_{L^p})$
and $\upsilon = \mu_L$ in the lemma.
Then the claim follows if we can find a uniform estimate for
\begin{equation}
\int_{H^{-\varepsilon}(\rho)} \hspace{-1em}
    h(\norm{\rho \inorm\nabla^{- \varepsilon} \phi}_{L^p}) \diff \mu_L(\phi).
\end{equation}
To do this we again use stochastic quantization.
By the chain rule the gradient operator of $h$ is
\begin{equation}
\nabla_{\phi}\, h(\norm{\rho \inorm\nabla^{- \varepsilon} \phi}_{L^p})
= \frac{h'(\norm{\rho \inorm\nabla^{- \varepsilon} \phi}_{L^p})}
    {\norm{\inorm\nabla^{- \varepsilon} \phi}^{p - 1}_{L^p(\rho)}}
(\rho \inorm\nabla^{- \varepsilon} \phi)^{p - 1} \rho \inorm\nabla^{- \varepsilon},
\end{equation}
and in the complex valued case 
\begin{equation}
\nabla_{\phi}\, h(\norm{\rho \inorm\nabla^{- \varepsilon} \phi}_{L^p})
= \frac{h'(\norm{\rho \inorm\nabla^{- \varepsilon} \phi}_{L^p})}
    {\norm{\inorm\nabla^{- \varepsilon} \phi}^{p - 1}_{L^p(\rho)}}
(\rho \inorm\nabla^{- \varepsilon} \phi)^{p - 2} \rho \inorm\nabla^{- \varepsilon} \bar{\phi}
\end{equation}
We can write the right-hand side via the adjoint of $\rho \inorm\nabla^{-\varepsilon}$ as
\begin{equation}
V(\phi) = \frac{h'(\norm{\rho \inorm\nabla^{- \varepsilon} \phi}_{L^p})}
    {\norm{\inorm\nabla^{- \varepsilon} \phi}^{p - 1}_{L^p(\rho)}}
\, (\rho \inorm\nabla^{- \varepsilon})^*\! \left[(\rho \inorm\nabla^{- \varepsilon} \phi)^{p - 1}\right].
\end{equation}
and analogously in the complex case. We then have the following lemma:

\begin{lemma}
The measure $\bar{\mu}_L$ is an invariant measure for the equation
\[
\partial_t X + (m^2 - \Delta) X + \wick{X^3} = V (X) + \xi,
\]
and in the complex case 
\[
\partial_t X + (m^2 - \Delta) X + \wick{|X|^2 X} = V (X) + \xi,
\]
where $\xi$ is space-time white noise.
\end{lemma}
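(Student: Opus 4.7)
The plan is to verify invariance by Galerkin approximation, where the claim reduces to a standard finite-dimensional Fokker--Planck identity, and then pass to the limit using the smoothing properties of $V$.

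At the truncated level, consider
\[
\partial_t u_N + (m^2 - \Delta) u_N + P_N \wick{(P_N u_N)^3} = P_N V(u_N) + P_N \xi,
\]
a genuine Itô SDE on the finite-dimensional space of Fourier-truncated functions. On this space $\mu_{L,N}$ has a smooth positive density, and the untilted truncated SQE is a gradient-flow SDE whose invariant measure is $\mu_{L,N}$ (the standard Liouville fact used already in Theorem~\ref{thm:liouville invariance}). Tilting $\mu_{L,N}$ by $e^{S}$ replaces $-\log\mu_{L,N}$ by $-\log\mu_{L,N} - S$, so by the usual Fokker--Planck identity the tilted measure $\bar\mu_{L,N}$ is invariant for the same SDE with drift augmented by $\nabla S$. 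The chain-rule computation preceding the lemma identifies $\nabla S$ with $V$, giving the claim at the truncated level.

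To pass $N \to \infty$, I would handle measures and equations separately. For the measures, $\mu_{L,N} \to \mu_L$ in total variation, the tilting weight $\psi \mapsto e^{h(\norm{\rho \inorm\nabla^{-\varepsilon}\psi}_{L^p(\rho)})}$ is continuous on $H^{-\varepsilon}(\rho)$ by Besov duality, and uniform integrability is provided by the finite-volume exponential tail of $\mu_L$ recalled from \cite{barashkov_variational_2020}; dominated convergence then yields $\bar\mu_{L,N} \to \bar\mu_L$ in total variation. For the equations, $V(\psi)$ is a bounded scalar prefactor times $(\rho \inorm\nabla^{-\varepsilon})^{*}[(\rho\inorm\nabla^{-\varepsilon}\psi)^{p-1}]$, which is $\varepsilon$-smoothing and continuous as a map $H^{-\varepsilon}(\rho) \to H^{\varepsilon/2}(\rho)$. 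Hence $V$ is a strictly subleading, analytically Lipschitz perturbation of the singular cubic SQE.

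The principal obstacle is confirming that the existence/uniqueness theory for SQE from Section~\ref{sec:SQ} still closes after $V$ is added. In the Da~Prato--Debussche decomposition $\bar u = Z + \bar\phi$, the new drift $V(\bar u)$ contributes an extra bounded term to the energy inequality \eqref{eq:sq tested} that is easily absorbed into the coercive left-hand side via Young's inequality, so the tightness and moment bounds transfer with essentially no change. This both identifies the limiting stationary law as $\bar\mu_L$ and the limiting dynamics as the claimed SPDE.
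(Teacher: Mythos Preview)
Your proposal is correct and matches the paper's approach: the paper's proof simply notes that $V$ is continuous on $\mathcal C^{-\delta}(\Lambda_L)$ and defers to a minor modification of Da~Prato--Debussche, and your Galerkin-approximation outline is precisely that modification spelled out. One small correction: the invariance of $\mu_{L,N}$ under the truncated SQE is a Fokker--Planck fact for gradient SDEs, not the Hamiltonian Liouville theorem invoked in Theorem~\ref{thm:liouville invariance} for NLW; the conclusion you need still holds, but the attribution is off.
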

\begin{proof}
Note that $V$ is continuous on $\Cc^{-\delta}(\Lambda_{L})$.
With this in mind the proof becomes a minor modification of the proof of Da~Prato and Debussche
\cite[Section 4]{da_prato_strong_2003} and we omit it.
\end{proof}

Again performing the Da~Prato--Debussche trick, i.e.\ decomposing $X = Z + \bar{\psi}$,
we obtain that $\bar{\psi}$ satisfies
\begin{equation}\label{eq:sq-exponential}
\partial_t \bar\psi + (m^2 - \Delta) \bar\psi + \wick{(Z + \bar\psi)^3} = V(Z + \bar\psi).
\end{equation}
We again test the equation with $\rho \bar\psi$ to obtain
\begin{equation}
\begin{split}
&\mathrel{\phantom{=}} \partial_t \int \rho \bar{\psi}^2 \dx + m^2 \int \rho \bar{\psi}^2 \dx
  + \int \abs{\nabla \bar{\psi}}^2 \dx + \int \bar{\psi}^4 \dx + G (Z,\bar{\psi})\\
&= \int \rho V (Z + \bar{\psi}) \bar{\psi} \dx,
\end{split}
\end{equation}
where the residual term $G$ is as in Theorem~\ref{thm:sq tightness}.
From the definitions and Hölder's inequality
\begin{equation}
\begin{split}
& \int \rho V (Z + \bar{\psi}) \bar{\psi} \dx\\
=\; & \frac{h'(\norm{\rho \inorm\nabla^{-\varepsilon} \psi}_{L^p})}
    {\norm{\inorm\nabla^{- \varepsilon} (Z + \bar{\psi})}^{p- 1}_{L^p (\rho)}}
    \int (\rho \inorm\nabla^{- \varepsilon} (Z + \bar{\psi}))^{p - 1}
    \rho \inorm\nabla^{- \varepsilon} (\rho \bar{\psi}) \dx\\
\lesssim\; & \frac{1}{\norm{\inorm\nabla^{- \varepsilon} (Z + \bar{\psi})}^{p - 1}_{L^p (\rho)}}
    \norm{\inorm\nabla^{-\varepsilon} (Z + \bar{\psi})}_{L^p (\rho)}^{p - 1}
    \norm{\inorm\nabla^{- \varepsilon} (\rho \bar{\psi})}_{L^p (\rho)}\\
\lesssim\; & \norm{\inorm\nabla^{- \varepsilon} (\rho \bar{\psi})}_{L^p (\rho)}\\
\lesssim\; & \norm{\rho}_{H^{-\varepsilon}(\R^2)} \norm{\bar{\psi}}_{H^1(\rho)}.\\
\leq\; & C + \frac 1 2 \norm{\bar{\psi}}^2_{H^1(\rho)}.
\end{split}
\end{equation}
We thus have that
\begin{equation}
\int \rho V (Z + \bar{\psi}) \bar{\psi} \dx
\leq \frac 1 2 \left( m^2 \int \rho \bar{\psi}^2 + \int | \nabla \bar{\psi} |^2 \dx \right) + C.
\end{equation}
We also apply the reasoning from Section \ref{sec:SQ} to the remainder term $G(Z, \bar\psi)$.
Upon taking an expectation, the time derivative and white noise integrals vanish.

This implies again the boundedness of $H^1$ norm.
Besov embedding then gives
$\sup_L \E \norm{\bar{\psi}}^2_{L^p (\rho)} < \infty$,
which gives the statement for exponential tails in $L^p$ norm.

\begin{corollary}
As $p$ was arbitrary, we also have
\[
\sup_L \int \exp (\norm{\phi_L}_{\mathcal C^{-2\varepsilon}(\rho)}) \diff \mu_L(\phi_L) < \infty.
\]
This implies that $\E \norm{\phi_L}^p_{\mathcal C^{-2\varepsilon}(\rho)}$ is
finite uniformly in $L$ for any $p < \infty$.
\end{corollary}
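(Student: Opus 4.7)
The plan is to upgrade the exponential integrability of $\|\rho\inorm\nabla^{-\varepsilon}\psi\|_{L^p}$, which the preceding argument establishes uniformly in $L$ for every finite $p$, to the stronger norm $\|\psi\|_{\mathcal C^{-2\varepsilon}(\rho)}$ appearing in the corollary. The gain in topology (from a weighted $L^p$ norm of a Bessel potential to a weighted $L^\infty$-in-frequency Besov norm) is paid for by a small loss of smoothness from $\varepsilon$ to $2\varepsilon$, and this trade-off is exactly what the Besov embeddings provide.

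First I would combine two applications of Theorem~\ref{thm:besov embedding}: for $p \geq 2/\varepsilon$,
\[
\|\psi\|_{\mathcal C^{-2\varepsilon}(\rho)}
= \|\psi\|_{B^{-2\varepsilon}_{\infty,\infty}(\rho)}
\lesssim \|\psi\|_{B^{-\varepsilon}_{p,\infty}(\rho)}
\lesssim \|\psi\|_{B^{-\varepsilon}_{p,p}(\rho)},
\]
the first inequality being valid because the smoothness budget $\varepsilon$ absorbs the Sobolev loss $d/p = 2/p$. Next I would identify the right-hand side with the weighted $L^p$ norm already controlled, via the standard Bessel--Besov comparison (which holds for $p \geq 2$)
\[
\|\psi\|_{B^{-\varepsilon}_{p,p}(\rho)}
\lesssim \|\inorm\nabla^{-\varepsilon}\psi\|_{L^p(\rho)}
= \|\rho \inorm\nabla^{-\varepsilon}\psi\|_{L^p},
\]
allowing myself if necessary to enlarge $\varepsilon$ by an arbitrarily small amount (harmless since $\varepsilon > 0$ is arbitrary in the conclusion). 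Chaining these displays gives the pointwise bound $\|\psi\|_{\mathcal C^{-2\varepsilon}(\rho)} \lesssim \|\rho \inorm\nabla^{-\varepsilon}\psi\|_{L^p}$; since $h$ was chosen to grow linearly at infinity, $\exp(x) \lesssim \exp(h(Cx)+C')$, so
\[
\sup_L \int \exp(\|\psi\|_{\mathcal C^{-2\varepsilon}(\rho)}) \mathd\mu_L
\lesssim \sup_L \int \exp\!\bigl(h(C\|\rho\inorm\nabla^{-\varepsilon}\psi\|_{L^p})\bigr) \mathd\mu_L,
\]
which is finite by the uniform partition function bound $\sup_L Z_L < \infty$. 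Finiteness of every polynomial moment $\int\|\psi\|^p_{\mathcal C^{-2\varepsilon}(\rho)}\mathd\mu_L$ is then immediate by Taylor-expanding the exponential and comparing coefficients uniformly in $L$.

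The only delicate point is the Bessel--Besov identification in the weighted setting: the convolution kernel of $\inorm\nabla^{-\varepsilon}$ has only polynomial decay and does not commute exactly with multiplication by $\rho$. In the flat case this is classical; here the commutator $[\inorm\nabla^{-\varepsilon}, \rho]$ is controlled by the weighted Young inequality already used in the appendix, at the cost of slightly enlarging the weight exponent or $\varepsilon$. Since both parameters are arbitrary in the statement, this bookkeeping loss is absorbed without consequence, so no genuinely new estimate beyond what has been collected in Section~\ref{sec:besov} should be needed.
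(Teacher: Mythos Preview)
Your proposal is correct and matches the paper's intended (but unwritten) argument: the paper gives no proof for this corollary beyond the phrase ``As $p$ was arbitrary,'' which is precisely the Besov embedding $\mathcal C^{-2\varepsilon}(\rho) \hookleftarrow B^{-\varepsilon}_{p,\infty}(\rho)$ for $p \geq 2/\varepsilon$ that you spell out. Your observation about the weighted Bessel--Besov commutator is legitimate bookkeeping the paper suppresses, and your resolution (absorb into a slightly worse $\varepsilon$ or weight exponent) is the standard one.
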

\begin{proof}
By Besov embedding it is sufficient to prove the claim
with $\norm{\phi_{L}}_{B^{-\varepsilon}_{p,p}(\rho)}$ in place of
$\norm{\phi_L}_{\mathcal C^{-2\varepsilon}(\rho)}$.
Now by Lemma~\ref{lemma:exponential-trick} we have
\begin{equation}
\log \int \exp (\norm{\phi_{L}}_{B^{-\varepsilon}_{p,p}(\rho)}) \diff \mu_L(\phi_L)
\leq \int \norm{\phi_{L}}_{B^{-\varepsilon}_{p,p}(\rho)} \diff \bar{\mu}_{L}(\phi_L),
\end{equation}
and the right-hand side is bounded by the above discussion. 
\end{proof}

\section{Continuity of linear solution}\label{sec:continuity}

In this appendix we show that the linear solution~\eqref{eq:periodic w}
is continuous in time in the polynomially weighted space.
The proof holds both for periodic and full-space initial data:
this dependency is fully encapsulated in the Green's function $G$
as in Appendix~\ref{sec:wick approx proof}.

We first show the claim for the flow started from Gaussian data.

\begin{lemma}[Continuity with GFF data]\label{thm:continuity gff}
Let $w_0$ be sampled from the Gaussian free field~$\nu$ (or $\nu_L$)
and $\xi_0$ from the white noise measure on~$\R^2$ (respectively $\Lambda_L$).
Denote by $w_t \coloneqq \Cc_t w_0 + \Cs_t \xi_0$ the solution to the linear wave equation.

For $j = 1$, $2$, $3$,
there exist versions $\wick{\tilde w_t^j} \in C([0,T];\, \mathcal C^{-2\varepsilon}(\rho))$
such that $\Prob(\wick{\tilde w_t^j} = \wick{w_t^j}) = 1$ for all $t \in [0, T]$,
and for all $p < \infty$ we have
\[
\E \norm{\wick{\tilde w^j}}_{C([0,T];\, \mathcal C^{-2\varepsilon}(\rho))}^p < \infty.
\]
\end{lemma}
\begin{proof}
The results are given by the Kolmogorov continuity theorem
once we have the estimates
\begin{equation}
\E \norm{\wick{w_{t}^j}}_{\mathcal C^{-2\varepsilon}(\rho)}^p
\lesssim 1, \quad
\E \norm{\wick{w_{t+s}^j} - \wick{w_t^j}}_{\mathcal C^{-2\varepsilon}(\rho)}^p
\lesssim \abs s^{1+\beta}
\end{equation}
for some $\beta > 0$ and all $t \in [0, T]$ and $\abs s \lesssim 1$.
We only prove the second estimate here as the first one is similar.

By stationarity we can fix $t = 0$,
and by Besov embedding replace the space by $B^{-\varepsilon}_{p,p}(\rho)$ for $p$ large.
As the weight belongs to $L^p$, translation invariance and hypercontractivity reduce the computation to
\begin{equation}
\begin{split}
&\mathrel{\phantom{=}} \E \norm{\wick{w_s^j} - \wick{w_0^j}}_{B^{-\varepsilon}_{p,p}(\rho)}^p\\
&\lesssim \sum_{k \geq -1} 2^{-k p \varepsilon} \left[
    \E \abs{\Delta_k [\wick{w_s^j} - \wick{w_0^j}](0)}^2
\right]^{p/2}.
\end{split}
\end{equation}
As in Lemma~\ref{thm:approx-gaussian},
the expectation is then expanded with the convolution kernels and Wick's theorem as
\begin{equation}\label{eq:continuity expanded}
\begin{split}
&\mathrel{\phantom{=}} \iint K_k(x) K_k(y)
    \E \bigg[ (\wick{w_s(x)^j} - \wick{w_0(x)^j})(\wick{w_s(y)^j} - \wick{w_0(y)^j}) \bigg] \dx\dy\\
&= 2 \iint K_k(x) K_k(y) j! G(x-y)^j \dx \dy\\
&\qquad - 2 \iint K_k(x) K_k(y) j! \left[ \E [w_s(x) w_0(y)] \right]^j \dx \dy.
\end{split}
\end{equation}
Here we again used stationarity.

Let us then remark that
\begin{equation}
\begin{split}
\E [\Cc_s w_0(x) w_0(y)]
&= \int \tilde K_s(z) \E [w_0(x-z) w_0(y)] \diff z\\
&= \int \tilde K_s(z) G(x-y-z) \diff z\\
&= \Cc_s G(x-y),
\end{split}
\end{equation}
where $\tilde K_s$ is the convolution kernel of $\Cc_s$.
(This formal computation can be made rigorous with Lemma~\ref{thm:green-approx}.)
Together with the independence of $w_0$ and $\xi_0$ this gives
\begin{equation}
\E [w_s(x) w_0(y)]
= \E [(\Cc_s w_0 + \Cs_s \xi_0)(x) w_0(y)]
= \Cc_s G(x-y).
\end{equation}
Hence we have shown
\begin{equation}
\eqref{eq:continuity expanded}
= 2 \iint K_k(x) K_k(y) j! [G(x-y)^j - \Cc_s G(x-y)^j] \dx \dy.
\end{equation}
Since for $q \geq 2$ we have
\begin{equation}
\begin{split}
\norm{G - \Cc_s G}_{L^{3q}}
&\lesssim \norm{G - \Cc_s G}_{W^{1-\varepsilon, 2}}\\
&= \norm{\inorm\nabla^{1-\varepsilon} (1 - \cos(\inorm\nabla s)) G}_{L^2}\\
&\lesssim \norm{\abs s^{\varepsilon/2} \inorm\nabla^{1-\varepsilon/2} G}_{L^2},
\end{split}
\end{equation}
we can proceed with the same Hölder estimate as in~\eqref{eq:approx-gaussian expanded} to get
\begin{equation}
\E \norm{\wick{w_s^j} - \wick{w_0^j}}_{B^{-\varepsilon}_{p,p}(\rho)}^p
\lesssim \sum_{k \geq -1} 2^{-k p \varepsilon}
    \left[ \abs s^{\varepsilon/2} 2^{2k/q} \right]^{p/2}.
\end{equation}
We now choose $q = 2/\varepsilon$ and $p$ such that $p > 4/\varepsilon$.
\end{proof}

\begin{corollary}[Continuity with $\phi^4_2$ data]\label{thm:continuity phi4}
The statements of Lemma~\ref{thm:continuity gff} hold also for
$w_t \coloneqq \Cc_t z_0 + \Cs_t \xi_0$, where
$z_0$ is sampled from the $\phi^4_2$ measure~$\mu$ (or $\mu_L$)
and $\xi_0$ from the white noise measure on~$\R^2$ (respectively $\Lambda_L$).
\end{corollary}
\begin{proof}
By Theorem~\ref{thm:phi42 weak limit} we can decompose $z_0 = w_0 + \psi$,
where $w_0$ is as in Lemma~\ref{thm:continuity gff} and $\psi \in H^{2-\varepsilon}(\rho)$.
Since $\Cc_t$ is continuous in time,
the results follow by expanding $\wick{(\Cc_t w_0 + \Cc_t \psi)^j}$,
Besov product and embedding formulas,
and boundedness of $\Cc_t$ as in Lemma~\ref{thm:linear operator bounds}.
\end{proof}

\bibliographystyle{plain}
\bibliography{nlw-2d-ref}

\begin{thebibliography}{10}

\bibitem{aizenman_marginal_2021}
Michael Aizenman and Hugo Duminil-Copin.
\newblock Marginal triviality of the scaling limits of critical {4D} {Ising}
  and ${\phi}_4^4$ models.
\newblock {\em Annals of Mathematics}, 194(1), July 2021.

\bibitem{albeverio_global_1990}
Sergio Albeverio and Ana-Bela Cruzeiro.
\newblock Global flows with invariant ({Gibbs}) measures for {Euler} and
  {Navier}--{Stokes} two dimensional fluids.
\newblock {\em Communications in Mathematical Physics}, 129(3):431--444, May
  1990.

\bibitem{albeverio_invariant_2020}
Sergio Albeverio and Seiichiro Kusuoka.
\newblock The invariant measure and the flow associated to the
  ${\Phi^4_3}$-quantum field model.
\newblock {\em Annali Scuola Normale Superiore di Pisa. Classe di Scienze},
  20(4):1359--1427, December 2020.

\bibitem{arnold_mathematical_1980}
Vladimir~I. Arnol'd.
\newblock {\em Mathematical methods of classical mechanics}, volume~60 of {\em
  Graduate texts in mathematics}.
\newblock Springer, 1980.

\bibitem{bahouri_fourier_2011}
Hajer Bahouri, Jean-Yves Chemin, and Raphaël Danchin.
\newblock {\em Fourier Analysis and Nonlinear Partial Differential Equations}.
\newblock Number 343 in Grundlehren der mathematischen {Wissenschaften}.
  Springer-Verlag, Berlin Heidelberg, 2011.

\bibitem{barashkov_elliptic_2021}
Nikolay Barashkov and Francesco~Carlo De~Vecchi.
\newblock Elliptic stochastic quantization of {{Sinh-Gordon QFT}}.
\newblock {\em Stochastics and Partial Differential Equations: Analysis and
  Computations}, July 2025.

\bibitem{barashkov_variational_2020}
Nikolay Barashkov and Massimiliano Gubinelli.
\newblock A variational method for $\phi^4_3$.
\newblock {\em Duke Mathematical Journal}, 169(17):3339--3415, November 2020.

\bibitem{barashkov_eyringkramers_2024}
Nikolay Barashkov and Petri Laarne.
\newblock Eyring--{{Kramers}} law for the hyperbolic $\phi^4$ model.
\newblock arXiv:2410.03495v1, October 2024.

\bibitem{billingsley_convergence_1999}
Patrick Billingsley.
\newblock {\em Convergence of probability measures}.
\newblock Wiley series in probability and statistics. Wiley, second edition,
  1999.

\bibitem{bourbaki_general_1966}
Nicolas Bourbaki.
\newblock {\em General topology. Part 2}.
\newblock Elements of mathematics. Hermann, 1966.

\bibitem{bourgain_periodic_1994}
Jean Bourgain.
\newblock Periodic nonlinear {Schrödinger} equation and invariant measures.
\newblock {\em Communications in Mathematical Physics}, 166(1):1--26, December
  1994.

\bibitem{bourgain_invariant_1996}
Jean Bourgain.
\newblock Invariant measures for the {2D}-defocusing nonlinear {Schrödinger}
  equation.
\newblock {\em Communications in Mathematical Physics}, 176:421--445, March
  1996.

\bibitem{bourgain_nonlinear_1999}
Jean Bourgain.
\newblock Nonlinear {Schrödinger} equations.
\newblock In {\em Hyperbolic equations and frequency interactions}, volume~5 of
  {\em {IAS}/{Park} {City} mathematics series}. American Mathematical Society,
  1999.

\bibitem{bourgain_invariant_2000}
Jean Bourgain.
\newblock Invariant measures for {NLS} in infinite volume.
\newblock {\em Communications in Mathematical Physics}, 210(3):605--620, April
  2000.

\bibitem{bourgain_invariant_2014}
Jean Bourgain and Aynur Bulut.
\newblock Invariant {Gibbs} measure evolution for the radial nonlinear wave
  equation on the 3d ball.
\newblock {\em Journal of Functional Analysis}, 266(4):2319--2340, February
  2014.

\bibitem{Bringmann_invariant_2022-1}
Bjoern Bringmann.
\newblock Invariant {Gibbs} measures for the three-dimensional wave equation
  with a {Hartree} nonlinearity {I}: measures.
\newblock {\em Stochastics and Partial Differential Equations: Analysis and
  Computations}, 10(1):1--89, March 2022.

\bibitem{Bringmann_invariant_2020}
Bjoern Bringmann.
\newblock Invariant {{Gibbs}} measures for the three-dimensional wave equation
  with a {{Hartree}} nonlinearity {{II}}: {{Dynamics}}.
\newblock {\em Journal of the European Mathematical Society}, 26(6):1933--2089,
  March 2023.

\bibitem{bringmann_invariant_2022}
Bjoern Bringmann, Yu~Deng, Andrea~R. Nahmod, and Haitian Yue.
\newblock Invariant {{Gibbs}} measures for the three dimensional cubic
  nonlinear wave equation.
\newblock {\em Inventiones mathematicae}, 236(3):1133--1411, June 2024.

\bibitem{bringmann_wave_2021}
Bjoern Bringmann, Jonas L{\"u}hrmann, and Gigliola Staffilani.
\newblock The wave maps equation and {{Brownian}} paths.
\newblock {\em Communications in Mathematical Physics}, 405(3):60, March 2024.

\bibitem{bringmann_invariant_2025}
Bjoern Bringmann and Gigliola Staffilani.
\newblock Invariant {{Gibbs}} measures for the one-dimensional quintic
  nonlinear {{Schr{\"o}dinger}} equation in infinite volume.
\newblock arXiv:2505.22478, May 2025.

\bibitem{brydges_statistical_1996}
David~C. Brydges and Gordon Slade.
\newblock Statistical mechanics of the 2-dimensional focusing nonlinear
  {Schrödinger} equation.
\newblock {\em Communications in Mathematical Physics}, 182(2):485--504,
  December 1996.

\bibitem{brzezniak_statistical_2022}
Zdzis{\l}aw Brze{\'z}niak and Jacek Jendrej.
\newblock Statistical mechanics of the wave maps equation in dimension 1 + 1.
\newblock {\em Journal of Functional Analysis}, 288(1):110688, January 2025.

\bibitem{burq_invariant_2007}
Nicolas Burq and Nikolay Tzvetkov.
\newblock Invariant measure for a three dimensional nonlinear wave equation.
\newblock {\em International Mathematics Research Notices}, page rnm108, 2007.

\bibitem{cacciafesta_invariance_2019}
Federico Cacciafesta and Anne-Sophie~de Suzzoni.
\newblock Invariance of {Gibbs} measures under the flows of {Hamiltonian}
  equations on the real line.
\newblock {\em Communications in Contemporary Mathematics}, 22:e1950012, March
  2020.

\bibitem{da_prato_strong_2003}
Giuseppe Da~Prato and Arnaud Debussche.
\newblock Strong solutions to the stochastic quantization equations.
\newblock {\em The Annals of Probability}, 31(4):1900--1916, October 2003.

\bibitem{deng_invariant_2019}
Yu~Deng, Andrea Nahmod, and Haitian Yue.
\newblock Invariant {{Gibbs}} measures and global strong solutions for
  nonlinear {{Schr{\"o}dinger}} equations in dimension two.
\newblock {\em Annals of Mathematics}, 200(2), September 2024.

\bibitem{ethier_markov_1986}
Stewart~N. Ethier and Thomas~G. Kurtz.
\newblock {\em Markov processes: characterization and convergence}.
\newblock Wiley series in probability and statistics. Wiley Interscience, 1986.

\bibitem{evans_partial_2010}
Lawrence~C. Evans.
\newblock {\em Partial differential equations}.
\newblock Number~19 in Graduate studies in mathematics. American Mathematical
  Society, second edition edition, 2010.

\bibitem{glimm_quantum_1987}
James Glimm and Arthur Jaffe.
\newblock {\em Quantum Physics: A Functional Integral Point of View}.
\newblock Springer-Verlag, New York, second edition, 1987.

\bibitem{gubinelli_pde_2021}
Massimiliano Gubinelli and Martina Hofmanová.
\newblock A {PDE} construction of the {Euclidean} ${\Phi^4_3}$ quantum field
  theory.
\newblock {\em Communications in Mathematical Physics}, 384(1):1--75, May 2021.

\bibitem{gubinelli_paracontrolled_2015}
Massimiliano Gubinelli, Peter Imkeller, and Nicolas Perkowski.
\newblock Paracontrolled distributions and singular {PDEs}.
\newblock {\em Forum of Mathematics. Pi}, 3:e6, August 2015.

\bibitem{gubinelli_renormalization_2018}
Massimiliano Gubinelli, Herbert Koch, and Tadahiro Oh.
\newblock Renormalization of the two-dimensional stochastic nonlinear wave
  equations.
\newblock {\em Transactions of the American Mathematical Society},
  370(10):7335--7359, October 2018.

\bibitem{gubinelli_global_2022}
Massimiliano Gubinelli, Herbert Koch, Tadahiro Oh, and Leonardo Tolomeo.
\newblock Global dynamics for the two-dimensional stochastic nonlinear wave
  equations.
\newblock {\em International Mathematics Research Notices},
  2022(21):16954--16999, October 2022.

\bibitem{gunaratnam_quasi-invariant_2022}
Trishen Gunaratnam, Tadahiro Oh, Nikolay Tzvetkov, and Hendrik Weber.
\newblock Quasi-invariant {Gaussian} measures for the nonlinear wave equation
  in three dimensions.
\newblock {\em Probability and Mathematical Physics}, 3(2):343--379, July 2022.

\bibitem{hairer_theory_2014}
Martin Hairer.
\newblock A theory of regularity structures.
\newblock {\em Inventiones mathematicae}, 198(2):269--504, March 2014.

\bibitem{kenig_focusing_2021}
Carlos Kenig and Dana Mendelson.
\newblock The focusing energy-critical nonlinear wave equation with random
  initial data.
\newblock {\em International Mathematics Research Notices},
  2021(19):14508--14615, October 2021.

\bibitem{kupiainen_renormalization_2016}
Antti Kupiainen.
\newblock Renormalization group and stochastic {PDEs}.
\newblock {\em Annales Henri Poincaré}, 17(3):497--535, March 2016.

\bibitem{lebowitz_statistical_1988}
Joel~L. Lebowitz, Harvey~A. Rose, and Eugene~R. Speer.
\newblock Statistical mechanics of the nonlinear {Schrödinger} equation.
\newblock {\em Journal of Statistical Physics}, 50(3):657--687, February 1988.

\bibitem{mckean_statistical_1994}
H.~P. McKean and K.~L. Vaninsky.
\newblock Statistical mechanics of nonlinear wave equations.
\newblock In Lawrence Sirovich, editor, {\em Trends and perspectives in applied
  mathematics}, volume 100 of {\em Applied Mathematical Sciences}, pages
  239--264. Springer-Verlag, New York, 1994.

\bibitem{mourrat_dynamic_2017}
Jean-Christophe Mourrat and Hendrik Weber.
\newblock The dynamic ${\Phi^4_3}$ model comes down from infinity.
\newblock {\em Communications in Mathematical Physics}, 356(3):673--753,
  December 2017.

\bibitem{mourrat_global_2017}
Jean-Christophe Mourrat and Hendrik Weber.
\newblock Global well-posedness of the dynamic $\phi^4$ model in the plane.
\newblock {\em The Annals of Probability}, 45(4):2398--2476, July 2017.

\bibitem{oh_stochastic_2021}
Tadahiro Oh, Mamoru Okamoto, and Leonardo Tolomeo.
\newblock {\em Stochastic quantization of the ${\Phi_3^3}$-model}.
\newblock Number~16 in {Memoirs of the European Mathematical Society}. EMS
  Press, 1 edition, February 2025.

\bibitem{oh_uniqueness_2024}
Tadahiro Oh, Mamoru Okamoto, and Nikolay Tzvetkov.
\newblock Uniqueness and non-uniqueness of the {{Gaussian}} free field
  evolution under the two-dimensional {{Wick}} ordered cubic wave equation.
\newblock {\em Annales de l'Institut Henri Poincar\'e, Probabilit\'es et
  Statistiques}, 60(3), August 2024.

\bibitem{oh_invariant_2021}
Tadahiro Oh, Tristan Robert, Philippe Sosoe, and Yuzhao Wang.
\newblock Invariant {Gibbs} dynamics for the dynamical sine-{Gordon} model.
\newblock {\em Proceedings of the Royal Society of Edinburgh Section A:
  Mathematics}, 151(5):1450--1466, October 2021.

\bibitem{oh_parabolic_2021}
Tadahiro Oh, Tristan Robert, and Yuzhao Wang.
\newblock On the parabolic and hyperbolic {Liouville} equations.
\newblock {\em Communications in Mathematical Physics}, 387(3):1281--1351,
  November 2021.

\bibitem{oh_quasi-invariant_2021}
Tadahiro Oh and Kihoon Seong.
\newblock Quasi-invariant {Gaussian} measures for the cubic fourth order
  nonlinear {Schrödinger} equation in negative {Sobolev} spaces.
\newblock {\em Journal of Functional Analysis}, 281(9):e109150, November 2021.

\bibitem{oh_remark_2020}
Tadahiro Oh, Kihoon Seong, and Leonardo Tolomeo.
\newblock A remark on {{Gibbs}} measures with log-correlated {{Gaussian}}
  fields.
\newblock {\em Forum of Mathematics, Sigma}, 12:e50, April 2024.

\bibitem{oh_optimal_2022}
Tadahiro Oh, Philippe Sosoe, and Leonardo Tolomeo.
\newblock Optimal integrability threshold for {Gibbs} measures associated with
  focusing {NLS} on the torus.
\newblock {\em Inventiones mathematicae}, 227(3):1323--1429, March 2022.

\bibitem{oh_optimal_2018}
Tadahiro Oh, Philippe Sosoe, and Nikolay Tzvetkov.
\newblock An optimal regularity result on the quasi-invariant {Gaussian}
  measures for the cubic fourth order nonlinear {Schrödinger} equation.
\newblock {\em Journal de l'École polytechnique — Mathématiques},
  5:793--841, October 2018.

\bibitem{oh_pedestrian_2018}
Tadahiro Oh and Laurent Thomann.
\newblock A pedestrian approach to the invariant {Gibbs} measures for the 2-d
  defocusing nonlinear {Schrödinger} equations.
\newblock {\em Stochastics and Partial Differential Equations: Analysis and
  Computations}, 6(3):397--445, September 2018.

\bibitem{oh_invariant_2020}
Tadahiro Oh and Laurent Thomann.
\newblock Invariant {Gibbs} measures for the 2-d defocusing nonlinear wave
  equations.
\newblock {\em Annales de la Faculté des sciences de Toulouse :
  Mathématiques}, 29(1):1--26, July 2020.

\bibitem{oh_hyperbolic_2022}
Tadahiro Oh, Leonardo Tolomeo, Yuzhao Wang, and Guangqu Zheng.
\newblock Hyperbolic ${P(\Phi)_2}$-model on the plane.
\newblock arXiv:2211.03735v2, November 2022.

\bibitem{oh_quasi-invariant_2017}
Tadahiro Oh and Nikolay Tzvetkov.
\newblock Quasi-invariant {Gaussian} measures for the cubic fourth order
  nonlinear {Schrödinger} equation.
\newblock {\em Probability Theory and Related Fields}, 169(3):1121--1168,
  December 2017.

\bibitem{oh_quasi-invariant_2020}
Tadahiro Oh and Nikolay Tzvetkov.
\newblock Quasi-invariant {Gaussian} measures for the two-dimensional
  defocusing cubic nonlinear wave equation.
\newblock {\em Journal of the European Mathematical Society}, 22(6):1785--1826,
  February 2020.

\bibitem{parisi_perturbation_1981}
Giorgio Parisi and Yong~Shi Wu.
\newblock Perturbation theory without gauge fixing.
\newblock {\em Scientia Sinica. Zhongguo Kexue}, 24(4):483--496, April 1981.

\bibitem{seong_invariant_2022}
Kihoon Seong.
\newblock Invariant {Gibbs} dynamics for the two-dimensional
  {Zakharov}--{Yukawa} system.
\newblock {\em Journal of Functional Analysis}, 286, February 2024.

\bibitem{simon_pphi_2_1974}
Barry Simon.
\newblock {\em The $P({\Phi})_2$ {Euclidean} (quantum) field theory}.
\newblock Princeton series in physics. Princeton University Press, 1974.

\bibitem{tolomeo_unique_2020}
Leonardo Tolomeo.
\newblock Unique ergodicity for a class of stochastic hyperbolic equations with
  additive space-time white noise.
\newblock {\em Communications in Mathematical Physics}, 377(2):1311--1347, July
  2020.

\bibitem{tolomeo_global_2021}
Leonardo Tolomeo.
\newblock Global well-posedness of the two-dimensional stochastic nonlinear
  wave equation on an unbounded domain.
\newblock {\em The Annals of Probability}, 49(3):1402--1426, May 2021.

\bibitem{triebel_theory_1992}
Hans Triebel.
\newblock {\em Theory of function spaces {II}}.
\newblock Number~84 in Monographs in mathematics. Birkhäuser Verlag, 1992.

\bibitem{tsatsoulis_spectral_2018}
Pavlos Tsatsoulis and Hendrik Weber.
\newblock Spectral gap for the stochastic quantization equation on the
  2-dimensional torus.
\newblock {\em Annales de l'Institut Henri Poincaré, Probabilités et
  Statistiques}, 54(3):1204--1249, August 2018.

\bibitem{tzvetkov_quasiinvariant_2015}
Nikolay Tzvetkov.
\newblock Quasiinvariant {Gaussian} measures for one-dimensional {Hamiltonian}
  partial differential equations.
\newblock {\em Forum of Mathematics, Sigma}, 3:e28, 2015.

\bibitem{xu_invariant_2014}
Samantha Xu.
\newblock Invariant {Gibbs} measure for {3D} {NLW} in infinite volume.
\newblock arXiv:1405.3856v1, May 2014.

\bibitem{zhidkov_invariant_1994}
Peter~E. Zhidkov.
\newblock An invariant measure for a nonlinear wave equation.
\newblock {\em Nonlinear Analysis: Theory, Methods \& Applications},
  22(3):319--325, February 1994.

\end{thebibliography}

\end{document}